\makeatletter \@addtoreset{equation}{section} \makeatother
\renewcommand\thetable{\thesection.\@arabic\c@table}
\theoremstyle{plain}
\newtheorem{maintheorem}{Theorem}
\newtheorem{maincorollary}{Corollary}
\newtheorem{Thm}{Theorem}[section]
\newtheorem{Lem}[Thm]{Lemma}
\newtheorem{Prop}[Thm]{Proposition}
\newtheorem{Que}[Thm]{Question}
\theoremstyle{remark}
\newtheorem{Def}[Thm] {Definition}
\newtheorem{Rem}[Thm] {Remark}
\begin{document}

\title{Non-uniform cocycles for some uniquely ergodic minimal dynamical systems on connected spaces}

\author{Wanshan Lin and Xueting Tian}

\address{Wanshan Lin, School of Mathematical Sciences,  Fudan University\\Shanghai 200433, People's Republic of China}
\email{21110180014@m.fudan.edu.cn}

\address{Xueting Tian, School of Mathematical Sciences,  Fudan University\\Shanghai 200433, People's Republic of China}
\email{xuetingtian@fudan.edu.cn}

\begin{abstract}
In this paper, we pay attention to a weaker version of Walters's question on the existence of non-uniform cocycles for uniquely ergodic minimal dynamical systems on non-degenerate connected spaces. We will classify such dynamical systems into three classes: not totally uniquely ergodic; totally uniquely ergodic but not topological weakly mixing; totally uniquely ergodic and topological weakly mixing. We will give an affirmative answer to such question for the first two classes. Also, we will show the existence of such dynamical systems in the first class with arbitrary topological entropy. 
\end{abstract}

\keywords{Non-uniform cocycles, Uniquely ergodic, Minimal, Totally ergodic}
\subjclass[2020] {37A05; 37A25; 37B05}
\maketitle
\section{Introduction}
 Let $\mathbb{Z}$, $\mathbb{N}$, $\mathbb{N}^+$ denote integers, non-negative integers, positive integers, respectively. Throughout this paper, we always use the pair $(X,f)$ to denote a \emph{dynamical system}, which means that $(X,d)$ is a compact metric space and $f:X \rightarrow X$ is a \emph{homeomorphism}. Let $\mathfrak{B}(X)$ denote the Borel $\sigma$-algebra of $X$. Let $\mathcal{M}(X)$, $\mathcal{M}_f(X)$, $\mathcal{M}_f^{e}(X)$ denote the space of Borel probability measures, $f$-invariant Borel probability measures, $f$-ergodic Borel probability measures, respectively. Let $C(X,\mathbb{C})$ (resp. $C(X,\mathbb{R})$) denote the space of real (resp. complex) continuous functions on $X$ with the norm $\|\varphi\|:=\sup\limits_{x\in X}|\varphi(x)|.$ Let $|A|$ denote the cardinality of the set $A$. 
 
 A dynamical system $(X,f)$ is said to be \emph{uniquely ergodic} if $|\mathcal{M}_f(X)|=1$, it's equivalent to $|\mathcal{M}^e_f(X)|=1$. Sometimes, to emphasis $\mu$ is the unique ergodic measure, we will say that $(X,\mu,f)$ is uniquely ergodic. It's known that for a given uniquely ergodic system $(X,\mu,f)$ and $\varphi\in C(X,\mathbb{R})$, we always have that $\frac{1}{n}S_n\varphi(x)$ converges uniformly to $\int\varphi\mathrm{d}\mu$ on $X$, where $S_n\varphi(x):=\sum_{i=0}^{n-1}\varphi(f^ix)$. A sequence $(\varphi_n)_{n\geq1}$ of $C(X,\mathbb{R})$ is called \emph{subadditive} if for every $x\in X$ and $n,m\geq 1$, the equality $\varphi_{n+m}(x)\leq\varphi_n(f^mx)+\varphi_m(x)$ is satisfied. It's clear that $(S_n\varphi)_{n\geq1}$ is subadditive whenever $\varphi\in C(X,\mathbb{R})$. When $(X,\mu,f)$ is uniquely ergodic and $\mu$ is non-atomic, it's shown in \cite[Example 6.3]{Derriennic-Krengel-1981}, there always exists a subadditive sequence $(\phi_n)_{n\geq1}$ of $C(X,\mathbb{R})$ such that $\frac{1}{n}\phi_n(x)$ does not converge uniformly on $X$. Let $C(X,\mathrm{GL}(d))$ denote the space of all continuous functions from $X$ to $\mathrm{GL}(d)$, where $\mathrm{GL}(d)$ is the set of invertible $d\times d$ matrices with real entries. Given $A\in C(X,\mathrm{GL}(d))$ and $n\in\mathbb{Z}$, the \emph{cocycle} $A(n,x)$ generated by $A$ is defined as 
 \[A(n,x)=\begin{cases}
 	A(f^{n-1}x)A(f^{n-2}x)\cdots A(x) &\text{if }n>0,\\
 	I_d &\text{if }n=0,\\
 	A^{-1}(f^nx)A^{-1}(f^{n+1}x)\cdots A^{-1}(f^{-1}x) &\text{if }n<0.
 \end{cases}
 \]
 Then it can be checked that $(\log\|A(n,x)\|)_{n\geq1}$ is subadditive. A cocycle $A$ is \emph{uniform} if $\frac{1}{n}\log\|A(n,x)\|$ converges uniformly on $X$, it's clear the convergence is independent with the choice of matrix norm. A map $A\in C(X,\mathrm{GL}(d))$ is uniform if the cocycle generated by $A$ is uniform. The following question was asked by Walters \cite{Walters-1986}.
 \begin{Que}\cite{Walters-1986}
 	When $(X,\mu,f)$ is uniquely ergodic and $\mu$ is non-atomic, does there exist a non-uniform $A\in C(X,\mathrm{GL}(2))$?
 \end{Que}
 
 Also, in \cite{Walters-1986}, Walters proved that if $(X,f)$ and $(X,f^2)$ is minimal, $(X,\mu,f)$ is uniquely ergodic while $(X,\mu,f^2)$ is not uniquely ergodic, then there exists a non-uniform $A\in C(X,\mathrm{GL}(2))$. The method to construct such a dynamical system was shown by Veech in \cite{Veech-1969}. For suitable irrational rotations on the circle, the existence of non-uniform map in $C(X,\mathrm{GL}(2))$ was shown by Herman \cite{Herman-1981}. The result was generalized by Lenz \cite{Lenz-2004} to every irrational rotation on the circle, by using Lenz's result, we can give an affirmative answer to Walters's question when $X$ is the circle (Proposition \ref{Prop 2.5}). In \cite{Furman-1997}, Furman studied the sufficient conditions for $A\in C(X,\mathrm{GL}(d))$ to be uniform when $(X,\mu,f)$ is uniquely ergodic, and the necessary conditions for $A\in C(X,\mathrm{GL}(2))$ to be uniform if further $(X,f)$ is minimal. The latter result was also generalized by Lenz \cite{Lenz-2004} without the assumption that $(X,f)$ is minimal.  
    
 Given a measure-preserving system $(X,\mu,f)$ and $A\in C(X,\mathrm{GL}(d))$, we define $\Lambda_{\mu}(A)$ as $$\Lambda_{\mu}(A):=\lim_{n\to\infty}\frac{1}{n}\int\log\|A(n,x)\|\mathrm{d}\mu(x)
 =\inf_{n\geq1}\frac{1}{n}\int\log\|A(n,x)\|\mathrm{d}\mu(x).$$
 Then $\Lambda_{\mu}(A)$ is independent with the choice of matrix norm and by Kingman's subadditive ergodic theorem, $$\lim_{n\to\infty}\frac{1}{n}\log\|A(n,x)\|=\Lambda_{\mu}(A)\text{ for }\mu\text{-}a.e.\text{ }x\in X.$$
 When $(X,\mu,f)$ is uniquely ergodic, given $A\in C(X,\mathrm{GL}(d))$, it's shown by Furman \cite[Corollary 2]{Furman-1997}, for every $x\in X$ and uniformly on $X$, $$\limsup_{n\to\infty}\frac{1}{n}\log\|A(n,x)\|\leq\Lambda_{\mu}(A),$$ if further $A$ is uniform, then by Proposition \ref{Prop-2.2}, $\frac{1}{n}\log\|A(n,x)\|$ converges uniformly to $\Lambda_{\mu}(A)$ on $X$.
 
 Recall that $(X,f)$ is said to be \emph{minimal} if for every $x\in X$, its orbit $\mathrm{orb}(x,f):=\{f^ix:i\in\mathbb{N}\}$ is dense in $X$; \emph{topological transitive} if for every pair of non-empty open subsets $U$ and $V$ of $X$, there exists $k\in\mathbb{N}^+$ such that $f^k(U)\cap V\neq\emptyset$; \emph{totally transitive} if $(X,f^n)$ is topological transitive for any $n\in\mathbb{N}^+$; \emph{topological weakly mixing} if $(X\times X,T\times T)$ is topological transitive; \emph{topological mixing} if for every pair of non-empty open subsets $U$ and $V$ of $X$, there exists $N\in\mathbb{N}^+$ such that $f^k(U)\cap V\neq\emptyset$ for any $k\geq N$. A uniquely ergodic dynamical system $(X,\mu,f)$ is said to be \emph{totally uniquely ergodic} if $(X,\mu,f^n)$ is uniquely ergodic for any $n\in\mathbb{N}^+$.
 
 In this paper, we will consider a weaker version of Walters's question for uniquely ergodic minimal dynamical systems on non-degenerate (i.e. with at least two points) connected spaces.
 \begin{Que}
 	Suppose that $(X,f)$ is a uniquely ergodic minimal dynamical system on a non-degenerate connected space $X$, does there exist a non-uniform $A\in C(X,\mathrm{GL}(m))$ for some $m\geq2$?
 \end{Que}
 Uniquely ergodic minimal dynamical system on connected spaces are always totally transitive \cite[Theorem 2.5]{Banks-1997} and can be classified into one of the following three classes:
 \begin{enumerate}[(I).]
 	\item Uniquely ergodic but not totally uniquely ergodic, minimal dynamical systems on connected spaces;
 	\item Totally uniquely ergodic, minimal but not topological weakly mixing dynamical systems on connected spaces;
 	\item Totally uniquely ergodic, minimal and topological weakly mixing dynamical systems on connected spaces.
 \end{enumerate}
 The well known examples in class (II) is the irrational rotations on the circle. An example in class (III) can be found in \cite{Pavlov-2008}. As for examples in class (I) will be shown in Theorem \ref{Theorem F}.
 \begin{maintheorem}\label{Theorem A}
 	Suppose that $(X,f)$ is a uniquely ergodic minimal dynamical system on a connected space $X$ and in class (I) and (II), then there exists a non-uniform $A\in C(X,\mathrm{GL}(m))$ for some $m\geq2$.
 \end{maintheorem}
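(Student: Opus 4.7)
The plan is to handle Classes~(I) and~(II) separately, in each case producing the cocycle as an appropriate lift or adaptation of a known non-uniform one.

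For Class~(II), minimality together with failure of topological weak mixing yields a non-constant continuous eigenfunction: some $c\in S^{1}\setminus\{1\}$ and non-constant $\varphi\in C(X,\mathbb{C})$ with $\varphi\circ f=c\,\varphi$. Since $|\varphi|$ is $f$-invariant and $(X,f)$ is minimal, $|\varphi|$ is constant, and after rescaling we view $\varphi:X\to S^{1}$. The image $\varphi(X)$ is closed, connected, and $R_{c}$-invariant in $S^{1}$. If $c$ were a nontrivial root of unity, the only such connected subsets would be singletons (excluded, as $R_{c}$ is fixed-point free) or $S^{1}$ itself; in the latter case $(X,f)$ would factor onto the periodic rotation $(S^{1},R_{c})$, impossible since factors of uniquely ergodic systems are uniquely ergodic while $(S^{1},R_{c})$ is not. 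Hence $c=e^{2\pi i\alpha}$ with $\alpha\notin\mathbb{Q}$, and $\varphi$ is a factor map of $(X,f)$ onto the irrational rotation $(S^{1},R_{c})$. Taking a non-uniform $A_{0}\in C(S^{1},\mathrm{GL}(2))$ supplied by Proposition~\ref{Prop 2.5}, the pullback $A:=A_{0}\circ\varphi\in C(X,\mathrm{GL}(2))$ satisfies $\|A(n,x)\|=\|A_{0}(n,\varphi(x))\|$, and surjectivity of $\varphi$ transfers the failure of uniform convergence from $S^{1}$ to $X$.

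For Class~(I), connectedness and $f$-minimality imply that $(X,f^{n})$ is minimal for every $n\geq 1$: an $f^{n}$-minimal proper closed subset $Y\subsetneq X$ would yield a disjoint clopen decomposition $X=\bigsqcup_{i=0}^{k-1}f^{i}Y$ with $k\geq 2$, contradicting connectedness. Let $n_{0}\geq 2$ be the smallest integer with $(X,f^{n_{0}})$ not uniquely ergodic, and pick $\mu_{0}\in\mathcal{M}^{e}_{f^{n_{0}}}(X)$ with $\mu_{0}\neq\mu$. The $f_{*}$-period $\ell$ of $\mu_{0}$ divides $n_{0}$; if $\ell<n_{0}$ then $\mu_{0}$ would be $f^{\ell}$-invariant, but $(X,f^{\ell})$ is uniquely ergodic by minimality of $n_{0}$, forcing $\mu_{0}=\mu$, a contradiction. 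Hence $\mu_{0},f_{*}\mu_{0},\ldots,f_{*}^{n_{0}-1}\mu_{0}$ are $n_{0}$ distinct $f^{n_{0}}$-ergodic measures whose normalized average is $\mu$. For $n_{0}=2$ this matches Walters's hypothesis exactly and his construction directly produces a non-uniform $A\in C(X,\mathrm{GL}(2))$. For $n_{0}\geq 3$ we mimic his approach: pick $\psi\in C(X,\mathbb{R})$ with $\int\psi\,d\mu_{0}\neq\int\psi\,df_{*}\mu_{0}$ and build, from $\psi$ together with the $f$-cyclic permutation of $\{f_{*}^{j}\mu_{0}\}$, a cocycle $A\in C(X,\mathrm{GL}(m))$ (with $m$ possibly depending on $n_{0}$) whose norm grows at different rates along orbits generic for different $\mu_{j}$.

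The hard step is the explicit construction in Class~(I) for $n_{0}\geq 3$, where no factor-map shortcut is available and $A$ must be built directly from the $n_{0}$-cyclic structure of $\{f_{*}^{j}\mu_{0}\}$. Once $A$ is in hand, non-uniformity reduces via Furman's uniform upper bound $\limsup\frac{1}{n}\log\|A(n,x)\|\leq\Lambda_{\mu}(A)$ to producing a point where the $\liminf$ is strictly smaller; the mutual singularity of the $f_{*}^{j}\mu_{0}$'s then furnishes the required $\mu_{j}$-generic orbits whose Birkhoff sums of $\psi$ deviate from $\int\psi\,d\mu$ with the wrong asymptotic distribution across the $n_{0}$ cosets.
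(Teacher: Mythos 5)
Your treatment of Class~(II) is essentially the paper's: extract a non-constant continuous eigenfunction from minimality plus failure of topological weak mixing, show the eigenvalue is irrational, use $\varphi$ as a factor map onto an irrational rotation, and pull back Lenz's non-uniform cocycle via the extension lemma (Proposition~\ref{Proposition 2.1}). The paper routes through the maximal equicontinuous factor (Lemmas~\ref{Lem2.3}, \ref{Lem2.4}) and uses transitivity of $f^{k_0}$ to rule out roots of unity, whereas you invoke the classic equivalence directly and use unique ergodicity of factors instead; both are fine, and the differences are cosmetic. (One small correction: Proposition~\ref{Prop 2.5} in the paper is about circle \emph{homeomorphisms}; the input you actually want is Lemma~\ref{Lemm3.1}, Lenz's result for irrational rotations.)

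Your treatment of Class~(I), however, has a genuine gap. The structural preliminaries are correct: connectedness gives minimality of every $(X,f^{n})$, the minimal $n_{0}$ with $(X,f^{n_{0}})$ not uniquely ergodic exists, and the $f_{*}$-orbit of $\mu_{0}$ has length exactly $n_{0}$ with average $\mu$ (this is essentially Lemmas~\ref{Lem 2.8} and \ref{Lemma 2.14}, together with the reduction-to-primes Proposition~\ref{Proposition 2.13}). But you leave the crucial two steps unexecuted. First, the explicit cocycle for $n_{0}\geq 3$ is never constructed. The paper builds the $p\times p$ companion-type matrix $A(x)=\begin{pmatrix}0 & I_{p-1}\\ e^{\varphi(x)} & 0\end{pmatrix}$, whose $p$-th power is diagonal with the Birkhoff sums of $\varphi$ along the $p$ residue classes on the diagonal; this is the key device that is missing from your sketch.

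Second, and more seriously, your proposed route to non-uniformity would not work as stated. You suggest that mutual singularity of the $f_{*}^{j}\mu_{0}$ ``furnishes $\mu_{j}$-generic orbits whose Birkhoff sums deviate,'' and that such a point yields $\liminf<\Lambda_{\mu}(A)$. But for the natural companion-type cocycle, at \emph{every} $f^{n_{0}}$-generic point of \emph{every} $\mu_{j}$ the exponential growth rate is the same: one always takes the max over the $n_{0}$ cosets, and that max is $\int\varphi\,\mathrm{d}\mu_{0}$ regardless of which $\mu_{j}$ the point is generic for. So the cocycle is genuinely uniform on $Q_{f^{n_{0}}}^{e}(X)$, and mutual singularity of the $\mu_{j}$ gives you nothing. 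The whole point is to exhibit a point $x\notin Q_{f^{n_{0}}}^{e}(X)$, and this is precisely the content of the paper's Lemma~\ref{Lemma 3.1}: a Baire-category argument showing that if $Q_{f^{p}}^{e}(X)=X$ then the dense $G_{\delta}$ sets $G^{f^{p},\nu\circ f^{-i}}$ would intersect, producing a point with $|V_{f^{p}}(x)|\geq 2$, a contradiction. The proof of Theorem~\ref{Theorem C} then closes the loop in the other direction: assuming $A$ uniform, every $x$ would be forced into $Q_{f^{p}}^{e}(X)$, contradicting $Q_{f^{p}}^{e}(X)\neq X$. Your sketch reverses this logic and omits the Baire-category step, which is where the real work lies for $n_{0}\geq 3$.
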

 
 It's still unknown whether we can have the same result for dynamical systems in class (III). Theorem \ref{Theorem A} is directly deduced from Theorem \ref{Theorem B}, Theorem \ref{Theorem C} and Theorem \ref{Theorem D}, which will be introduced in the following. 
 
 \begin{maintheorem}\label{Theorem B}
 	Suppose that $(X,f)$ is uniquely ergodic, minimal, totally transitive but not topological weakly mixing, then for any $m\geq2$, there exists a non-uniform $A\in C(X,\mathrm{GL}(m))$.
 \end{maintheorem}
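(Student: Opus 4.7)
The plan is to use the failure of topological weak mixing to produce a continuous surjective factor map from $(X,f)$ onto an irrational rotation of the circle, and then pull back the non-uniform cocycle furnished by Proposition~\ref{Prop 2.5}.

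By the classical characterization of topological weak mixing for minimal systems, $(X,f)$ admits no non-constant continuous eigenfunction if and only if it is topologically weakly mixing. Under the hypotheses, there must therefore exist a continuous $\varphi:X\to\mathbb{C}$ and $\lambda\in\mathbb{S}^1\setminus\{1\}$ with $\varphi\circ f=\lambda\varphi$ and $\varphi$ non-constant. Since $|\varphi|$ is continuous and $f$-invariant, minimality forces it to be a positive constant, so after rescaling $\varphi$ takes values in $\mathbb{S}^1$. If $\lambda$ were a primitive $n$-th root of unity for some $n\geq 2$, then $\varphi$ would be $f^n$-invariant; but the combination of total transitivity and minimality of $(X,f)$ forces $(X,f^n)$ to be minimal (any $f^n$-orbit closure is $f^n$-invariant, and transitivity of $f^n$ makes it all of $X$), which would make $\varphi$ constant, a contradiction. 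Hence $\lambda$ is not a root of unity, the rotation $R_\lambda:z\mapsto\lambda z$ is minimal on $\mathbb{S}^1$, and the compact $R_\lambda$-invariant set $\varphi(X)$ equals $\mathbb{S}^1$. Thus $\varphi:(X,f)\to(\mathbb{S}^1,R_\lambda)$ is a continuous surjective factor map.

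Next, fix a non-uniform $B\in C(\mathbb{S}^1,\mathrm{GL}(2))$ provided by Proposition~\ref{Prop 2.5}. Replacing $B$ by $cB$ for a sufficiently large scalar $c>0$, which shifts Lyapunov exponents by $\log c$ without affecting non-uniformity, I may assume $\Lambda_\nu(B)>0$, where $\nu$ is the Haar measure on $\mathbb{S}^1$. For arbitrary $m\geq 2$, set $B_m:=B\oplus I_{m-2}\in C(\mathbb{S}^1,\mathrm{GL}(m))$ and define $A:=B_m\circ\varphi\in C(X,\mathrm{GL}(m))$. Then $A(n,x)=B_m(n,\varphi(x))$, and unique ergodicity of $R_\lambda$ gives $\varphi_*\mu=\nu$, so $\Lambda_\mu(A)=\Lambda_\nu(B_m)=\Lambda_\nu(B)>0$ (the condition $\Lambda_\nu(B)>0$ precisely ensures that the inert block $I_{m-2}$ does not absorb the dynamics, so that the cocycle generated by $B_m$ remains non-uniform on $\mathbb{S}^1$). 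Finally, if $\tfrac{1}{n}\log\|A(n,\cdot)\|$ were to converge uniformly on $X$, then surjectivity of $\varphi$ would force $\tfrac{1}{n}\log\|B_m(n,\cdot)\|$ to converge uniformly on $\mathbb{S}^1$, contradicting non-uniformity of $B_m$.

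The most delicate point is the first step: extracting a continuous $\mathbb{S}^1$-valued eigenfunction from the failure of topological weak mixing, and then using total transitivity to exclude rational eigenvalues so as to secure an \emph{irrational} circle factor. Once that factor is in place, the remainder of the argument is a routine pull-back of the Herman--Lenz cocycle on $\mathbb{S}^1$, together with a short verification that the trivial $I_{m-2}$ summand preserves non-uniformity under $\Lambda_\nu(B)>0$.
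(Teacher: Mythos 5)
Your proposal follows essentially the same approach as the paper: extract an irrational rotation factor on $\mathbb{T}^1$ from the failure of topological weak mixing, pull back the Lenz non-uniform cocycle (Lemma \ref{Lemm3.1}/Proposition \ref{Prop 2.5}) via the factor map (this is Proposition \ref{Proposition 2.1}), then pad with an identity block to reach any $m\geq 2$ (this is Proposition \ref{Proposition 2.7}). The only substantive difference is that the paper routes the eigenfunction argument through the maximal equicontinuous factor $(X_{eq},f_{eq})$ and the Halmos--von Neumann theorem, whereas you apply the eigenfunction characterization of weak mixing directly to $(X,f)$; these are equivalent for minimal systems, so either is fine. Your inline version of the padding argument (scaling so $\Lambda_\nu(B)>0$, then appending $I_{m-2}$) is correct; the paper's Proposition \ref{Proposition 2.7} does the same thing more cleanly by normalizing the determinant so that $\|\tilde A(n,x)\|_\infty\geq 1$ automatically, avoiding the need to track the size of $\Lambda_\nu$.

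One local flaw worth fixing: your parenthetical justification ``any $f^n$-orbit closure is $f^n$-invariant, and transitivity of $f^n$ makes it all of $X$'' does not establish minimality of $(X,f^n)$ --- topological transitivity supplies one dense orbit, not density of every orbit closure (the full shift is transitive but not minimal). It is in fact true that a minimal system with $(X,f^n)$ transitive has $(X,f^n)$ minimal, but that requires the regular periodic decomposition argument, not the reasoning you gave. Fortunately you do not need minimality at all: a continuous $f^n$-invariant function on a topologically transitive $(X,f^n)$ is already forced to be constant (it is constant on a dense orbit, hence on its closure $X$), which is exactly how the paper argues. Replacing the parenthetical by this shorter transitivity argument closes the gap and matches the paper.
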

 \begin{Rem}
 	The proof of Theorem \ref{Theorem B} is based on Lenz's work on the existence of non-uniform cocycles for irrational rotations on the circle. One of the keys is the basic fact (Proposition \ref{Proposition 2.1}): the extension of a dynamical system having non-uniform cocycles also has non-uniform cocycles.
 \end{Rem}
 Given $x\in X$ and $n\in\mathbb{N}^+$, the $n$-order \emph{empirical measure} of $x$ for $f$ is denoted by $$\mathcal{E}_{f,n}(x):=\frac{1}{n}\sum_{i=0}^{n-1}\delta_{f^ix}.$$
 Let $V_f(x)$ denote the set of accumulation points of $\{\mathcal{E}_{f,n}(x):n\in\mathbb{N}^+\}$, then it's a non-empty compact connected subset of $\mathcal{M}_f(X)$ (\cite[Proposition 3.8]{Denker-Grillenberger-Sigmund-1976}). Given $\mu\in\mathcal{M}_f(X)$, denote $$G_{f,\mu}:=\left\{x\in X:V_f(x)=\{\mu\}\right\}.$$  
 Every point $x\in G_{f,\mu}$ is said to be a \emph{generic} point of $\mu$ for $f$. When $\mu\in\mathcal{M}_f^e(X)$, $\mu(G_{f,\mu})=1$ (\cite[Proposition 5.9]{Denker-Grillenberger-Sigmund-1976}). Denote $$Q_f^e(X):=\bigcup_{\mu\in\mathcal{M}_f^e(X)}G_{f,\mu}.$$
 
 \begin{maintheorem}\label{Theorem C}
 	Suppose that $(X,f)$ is uniquely ergodic and there exists a prime $p\in\mathbb{N}^+$ such that $Q_{f^p}^e(X)\neq X$, then for any $m\geq p$, there exists a non-uniform $A\in C(X,\mathrm{GL}(m))$.
 \end{maintheorem}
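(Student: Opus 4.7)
The plan is to construct $A$ explicitly as a product $B(x)=PD(x)$ of a cyclic permutation $P$ and a diagonal $D(x)$ on $\mathbb{R}^p$, chosen so that $\tfrac{1}{n}\log\|A(n,x)\|$ converges to $\Lambda_\mu(A)$ at every $f^p$-generic point but sits strictly below $\Lambda_\mu(A)$ along a subsequence at some fixed non-generic $x_0$. The hypothesis together with primality of $p$ forces the $f^p$-ergodic decomposition of $\mu$ to take the form $\mu=\tfrac{1}{p}\sum_{i=0}^{p-1}\nu_i$ with $\nu_i:=(f^i)_*\nu_0$ the $p$ distinct $f^p$-ergodic measures (were $\mu$ itself $f^p$-ergodic, $(X,f^p)$ would be uniquely ergodic and then $Q_{f^p}^e(X)=X$). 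These exhaust $\mathcal{M}_{f^p}^e(X)$ by uniqueness of the ergodic decomposition. Pick $x_0\in X\setminus Q_{f^p}^e(X)$: since $V_{f^p}(x_0)$ is a non-empty connected subset of the $(p-1)$-simplex $\mathcal{M}_{f^p}(X)$ that does not reduce to any singleton $\{\nu_l\}$, it must contain a non-ergodic element $\rho=\sum_l\lambda_l\nu_l$ with $0\le\lambda_l<1$ for every $l$.

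The cocycle is built as follows. Because the distinct ergodic measures $\nu_l$ are mutually singular, hence linearly independent in $C(X,\mathbb{R})^*$, choose $\psi\in C(X,\mathbb{R})$ with $\psi\ge 1$ and $a_0:=\int\psi\,d\nu_0>a_l:=\int\psi\,d\nu_l$ for $l=1,\dots,p-1$. Let $P\in\mathrm{GL}(p)$ be the cyclic permutation $Pe_j=e_{(j+1)\bmod p}$, set $D(x):=\mathrm{diag}(e^{\psi(x)},1,\dots,1)$, and define $B(x):=PD(x)$. For $m=p$ take $A:=B$; for $m>p$ take $A(x):=B(x)\oplus I_{m-p}$. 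A direct induction gives $B(n,x)e_j=\exp\bigl(\sum_{i<n,\,i\equiv -j\pmod p}\psi(f^ix)\bigr)\,e_{(j+n)\bmod p}$, so $B(n,x)$ is a generalized permutation matrix with exactly one non-zero entry per column, and hence $\tfrac{1}{n}\log\|B(n,x)\|=\max_{0\le r<p}\tfrac{1}{n}\sum_{i<n,\,i\equiv r\pmod p}\psi(f^ix)$.

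Using the identity $\mathcal{E}_{f^p,m}(f^rx)=f^r_*\mathcal{E}_{f^p,m}(x)$, evaluating at $n=pm$ and any $x\in G_{f^p,\nu_s}$ gives limit $\tfrac{1}{p}\max_l a_l=a_0/p$, so $\Lambda_\mu(B)=a_0/p$ (by dominated convergence or Kingman's theorem, noting $\mu(Q_{f^p}^e(X))=1$); at $x_0$, along a subsequence $m_k$ with $\mathcal{E}_{f^p,m_k}(x_0)\to\rho$ the same formula produces $\lim_k\tfrac{1}{pm_k}\log\|B(pm_k,x_0)\|=\tfrac{1}{p}\max_r\sum_l\lambda_l a_{l+r}$. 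The strict-maximum property of $\psi$ combined with $\lambda_l<1$ for every $l$ forces $\sum_l\lambda_l a_{l+r}<a_0$ for each $r$, so $\liminf_n\tfrac{1}{n}\log\|A(n,x_0)\|<a_0/p=\Lambda_\mu(A)$, which rules out uniformity. The lower bound $\psi\ge 1$ keeps $\|B(n,x)\|\ge 1$ for $n\ge p$, so the padded block $I_{m-p}$ affects neither $\Lambda_\mu(A)$ nor the failure of uniform convergence.

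The hardest point I anticipate is the strict-inequality step at $x_0$: for every cyclic shift $r$ and every non-ergodic $\rho$, one needs $\sum_l\lambda_la_{l+r}$ to drop strictly below $a_0$. This is what requires $\psi$ to have a \emph{unique} maximizer among the $\nu_l$, together with the bookkeeping that $\lambda_l<1$ for every $l$ (which is exactly the assertion that $\rho$ is not some $\nu_{l^*}$); without strict uniqueness or with some $\lambda_{l^*}=1$, equality could persist and the argument collapses. The ansatz $B=PD$ is the other essential ingredient: its $p$-fold power decouples into $p$ independent $f^p$-Birkhoff sums of $\psi$ along the orbits of $x,fx,\dots,f^{p-1}x$, which is the mechanism that converts the hypothesis $Q_{f^p}^e(X)\ne X$ into a failure of uniform convergence of $\tfrac{1}{n}\log\|A(n,\cdot)\|$ for $f$ itself.
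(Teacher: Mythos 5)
Your proposal is correct and follows essentially the same approach as the paper: the same cyclic-shift-times-diagonal cocycle (yours is a transpose/relabeling of theirs), the same $f^p$-ergodic decomposition $\mu=\tfrac{1}{p}\sum_{i=0}^{p-1}\nu\circ f^{-i}$ via Lemma~\ref{Lemma 2.14}, the same separating function (Lemma~\ref{Lemma 2.8} in the paper makes your ``by mutual singularity'' choice of $\psi$ rigorous), and the same growth-rate computation along $n=pm$. The only difference is in the closing step: you exhibit a subsequential limit strictly below $\Lambda_\mu(A)$ at a chosen $x_0\notin Q_{f^p}^e(X)$, while the paper argues by contradiction that uniformity would force every $V_{f^p}(x)$ to lie in the finite set $\mathcal{M}_{f^p}^e(X)$ and hence, by connectedness, be a singleton; these two endings are logical mirror images of one another.
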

 \begin{Rem}
 	By Lemma \ref{Lemma 3.1}, we know that if $(X,f)$ and $(X,f^2)$ are minimal, $(X,\mu,f)$ is uniquely ergodic while $(X,\mu,f^2)$ is not uniquely ergodic, then $Q_{f^2}^e(X)\neq X$. Hence, Theorem \ref{Theorem C} is a generalization of Walters's work on the existence of non-uniform cocycles for such dynamical systems.  
 \end{Rem}
  We will give a sufficient condition for a dynamical system satisfying the assumptions in Theorem \ref{Theorem C}.
  \begin{maintheorem}\label{Theorem D}
  	Suppose that $(X,f)$ is totally transitive, uniquely ergodic but not totally uniquely ergodic, then there exists a prime $p\in\mathbb{N}^+$  such that $Q_{f^p}^e(X)\neq X$.
  \end{maintheorem}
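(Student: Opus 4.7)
The plan has two steps: first, produce a prime $p$ for which $(X,f^p)$ is not uniquely ergodic, and second, use a Baire-category argument on pointwise limits to conclude $Q_{f^p}^e(X)\neq X$. For the first step I would work with the Koopman operator $U_f$ on $L^2(\mu)$. Since $(X,\mu,f)$ is uniquely ergodic and not totally uniquely ergodic, $\mu$ fails to be $f^n$-ergodic for some $n\geq 2$, so there is a non-constant $f^n$-invariant $h\in L^2(\mu)$. Decomposing $h$ into $U_f$-eigencomponents yields a nonzero eigenfunction with eigenvalue $\lambda\neq 1$ satisfying $\lambda^n=1$; let $q\geq 2$ be the order of $\lambda$ and pick a prime $p\mid q$. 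By $f$-ergodicity of $\mu$ one may normalise the eigenfunction so that $|h|=1$ almost everywhere; then $h^{q/p}\in L^\infty(\mu)$ is an eigenfunction of $U_f$ with eigenvalue $\lambda^{q/p}$, a primitive $p$-th root of unity. This function is $f^p$-invariant, has zero mean against $\mu$, and is non-constant, so $\mu$ is not $f^p$-ergodic; since $\mu$ is the unique $f$-invariant measure, $(X,f^p)$ is therefore not uniquely ergodic.

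For the second step, suppose for contradiction that $Q_{f^p}^e(X)=X$. Total transitivity of $f$ provides a point $y_0\in X$ with dense $f^p$-orbit; under the contradiction hypothesis $y_0\in G_{f^p,\nu_0}$ for some $\nu_0\in\mathcal{M}_{f^p}^e(X)$. Because $\mu$ is not $f^p$-ergodic one has $\nu_0\neq\mu$; and because $p$ is prime, the $f$-orbit $\{\nu_0,f_*\nu_0,\dots,f^{p-1}_*\nu_0\}$ in $\mathcal{M}_{f^p}^e(X)$ consists of exactly $p$ distinct measures (any shorter orbit would force $\nu_0$ to be $f$-invariant, hence equal to $\mu$). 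The set $G_{f^p,\nu_0}$ contains the dense orbit $\{f^{pk}y_0:k\geq 0\}$, hence is dense in $X$, and its $f$-image $G_{f^p,f_*\nu_0}=f(G_{f^p,\nu_0})$ is also dense and disjoint from $G_{f^p,\nu_0}$.

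Now I would pick $\varphi\in C(X,\mathbb{R})$ with $\int\varphi\,d\nu_0\neq\int\varphi\,d(f_*\nu_0)$, and set
\[
g(x):=\lim_{n\to\infty}\frac{1}{n}\sum_{i=0}^{n-1}\varphi(f^{pi}x),
\]
which under the contradiction hypothesis is defined for every $x\in X$ as the pointwise limit of the continuous partial averages. Hence $g$ is of Baire class~$1$ on the compact metric space $X$, so by Baire's theorem its set of continuity points is a dense $G_\delta$. At any continuity point $x_0$, approximating $x_0$ by sequences in the two dense sets $G_{f^p,\nu_0}$ and $G_{f^p,f_*\nu_0}$ forces $g(x_0)$ to equal both $\int\varphi\,d\nu_0$ and $\int\varphi\,d(f_*\nu_0)$, which is the desired contradiction.

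The main obstacle is the prime reduction in Step~1: the naive minimal-counterexample argument does not directly yield a prime, because $(X,f^a)$ and $(X,f^b)$ both being uniquely ergodic does not force $(X,f^{ab})$ to be uniquely ergodic (witness a primitive sixth-root-of-unity $L^2(\mu)$-eigenvalue). The spectral device of raising a unimodular eigenfunction to an integer power is what circumvents this. Step~2 by contrast needs only total transitivity of $f$ and the Baire property of $X$, not minimality, so it should transfer Walters's $p=2$ argument to arbitrary primes with little extra work.
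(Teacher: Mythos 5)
Your proof is correct, and it takes a genuinely different route from the paper in the prime-reduction step. The paper obtains the prime $p$ by a purely combinatorial argument (Lemma \ref{Lemma 2.12} and Proposition \ref{Proposition 2.13}): writing $d=p_1\cdots p_k$, one shows by induction on $k$ that non-unique ergodicity of $f^{pq}$ forces non-unique ergodicity of $f^p$ or $f^q$, using the structure of $\mathcal{M}_{f^d}^e(X)$ from Lemma \ref{Lem 2.8} and the convex-combination argument of Lemma \ref{Lemma 2.1}. You instead pass through spectral theory: you observe that, in the uniquely ergodic setting, $(X,\mu,f^n)$ being not uniquely ergodic is equivalent to $\mu$ not being $f^n$-ergodic (this deserves a line of justification, though it does follow quickly from Lemma \ref{Lem 2.8} or from the extremality of ergodic measures), then decompose a non-constant $f^n$-invariant function into $U_f$-eigencomponents supported on $n$-th roots of unity, extract a unimodular eigenfunction of some order $q\geq 2$, and raise it to the power $q/p$ to obtain a mean-zero, non-constant $f^p$-invariant function. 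This makes the prime reduction transparent at the cost of invoking the spectral theorem. Your Step~2 is essentially the paper's Lemma \ref{Lemma 3.1} in a different guise: the paper observes that $G^{f^p,\nu}$ and $G^{f^p,\nu\circ f^{-1}}$ are dense $G_\delta$ sets and intersects them by Baire category, while you apply the Baire-class-1 continuity-point theorem to $g(x)=\lim_n\frac{1}{n}\sum_{i<n}\varphi(f^{pi}x)$; both arguments reduce to the same Baire-category fact, and both use total transitivity only to obtain a dense $f^p$-orbit and hence density of the generic sets.

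One inaccuracy in your closing remarks: the claim that the naive minimal-counterexample argument fails because ``$(X,f^a)$ and $(X,f^b)$ both being uniquely ergodic does not force $(X,f^{ab})$ to be uniquely ergodic'' is wrong, and the suggested witness does not work. If $U_f$ has a primitive sixth-root-of-unity eigenvalue $\lambda$ with unimodular eigenfunction $h$, then $h^3$ is a non-constant $f^2$-invariant function and $h^2$ is a non-constant $f^3$-invariant function, so $(X,f^2)$ and $(X,f^3)$ already fail to be uniquely ergodic. In fact the paper's Proposition \ref{Proposition 2.13} shows precisely that the smallest $d\geq2$ with $(X,\mu,f^d)$ not uniquely ergodic is automatically prime, so the naive argument does succeed --- it is just not trivial, which is the content of Lemma \ref{Lemma 2.12}.
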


  Next, we will give examples to show the existences of topological mixing dynamical systems satisfying the assumptions in Theorem \ref{Theorem D}. Given $\mu\in\mathcal{M}_f(X)$, $(X,\mu,f)$ is said to be \emph{weakly mixing} if $(X\times X,\mu\times\mu,f\times f)$ is ergodic. Let $h_\mu(f)$ denote the metric entropy for any $\mu\in\mathcal{M}_f(X)$, we have
   \begin{maintheorem}\label{Theorem E} Suppose that $(X,f)$ is a dynamical system and $\mu\in\mathcal{M}_f(X)$ is a weakly mixing measure, then for any prime $p$, there exists a dynamical system $(Y,g)$, such that 
  \begin{enumerate}[(1)]
  	\item $(Y,\nu,g^i)$ is uniquely ergodic for any $1\leq i\leq p-1$, but $(Y,\nu,g^p)$ is not uniquely ergodic;
  	
  	\item $(X,\mathfrak{B}(X), \mu,f)$ is a factor of $(Y,\mathfrak{B}(Y), \nu,g)$ and $h_\nu(g)=h_\mu(f)$;
  	
  	\item $(Y,g)$ is topological mixing.
  	
  \end{enumerate}
  \end{maintheorem}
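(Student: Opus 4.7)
The plan is to construct $(Y,g,\nu)$ in two stages: first build the intended measure-theoretic skeleton as a cyclic $\mathbb{Z}/p\mathbb{Z}$-extension of $(X,\mu,f)$, then invoke Lehrer's topologically mixing strengthening of the Jewett--Krieger theorem to obtain a suitable topological realization.

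For the measure-theoretic stage I would set $\tilde Y = X\times\mathbb{Z}/p\mathbb{Z}$, $\tilde g(x,k)=(fx,k+1\bmod p)$, and $\tilde\nu = \mu\times m$ with $m$ uniform on $\mathbb{Z}/p\mathbb{Z}$. The coordinate projection $\tilde\pi(x,k)=x$ is a measure-preserving factor map; since the fibers are finite, the Abramov--Rokhlin formula gives $h_{\tilde\nu}(\tilde g)=h_\mu(f)$. The crucial computation concerns ergodicity of the iterates: expanding any $\tilde g^i$-invariant $F\in L^2(\tilde\nu)$ in characters of $\mathbb{Z}/p\mathbb{Z}$ as $F(x,k)=\sum_{j=0}^{p-1}F_j(x)e^{2\pi ijk/p}$, invariance reads $F_j\circ f^i=e^{-2\pi iji/p}F_j$, so each $F_j$ is an eigenfunction of $f^i$. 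Weak mixing of $\mu$ forces $f^i$ to have only the trivial eigenvalue $1$ with constant eigenfunctions, and for $1\le i\le p-1$ with $\gcd(i,p)=1$ the root of unity $e^{-2\pi iji/p}$ equals $1$ only when $j=0$; hence $F$ is constant and $\tilde g^i$ is ergodic. In contrast, $\tilde g^p(x,k)=(f^px,k)$ preserves the second coordinate, and the probabilities $\{\mu\times\delta_k\}_{k=0}^{p-1}$ are $p$ distinct $\tilde g^p$-ergodic components of $\tilde\nu$.

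Then I would apply Lehrer's theorem---every ergodic measure-preserving system admits a topologically mixing, uniquely ergodic, minimal topological model---to $(\tilde Y,\tilde\nu,\tilde g)$, producing a compact metric space $Y$, a homeomorphism $g:Y\to Y$, and a Borel probability $\nu$ with $(Y,\nu,g)\cong(\tilde Y,\tilde\nu,\tilde g)$ measure-theoretically, $(Y,g)$ topologically mixing, and $\nu$ the unique $g$-invariant Borel probability. Items (2) and (3) of the theorem are immediate, and transporting the $p$ distinct $\tilde g^p$-ergodic measures through the isomorphism yields $p$ distinct $g^p$-invariant Borel probabilities on $Y$, so $(Y,\nu,g^p)$ is not uniquely ergodic.

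The main subtlety, and the only step not delivered directly by Lehrer's theorem, is upgrading measure-theoretic ergodicity of $(Y,\nu,g^i)$ to topological unique ergodicity of $g^i$ for $1\le i\le p-1$. I would argue as follows: let $\rho$ be any $g^i$-invariant Borel probability on $Y$; then $\tfrac{1}{i}\sum_{j=0}^{i-1}g^j_*\rho$ is $g$-invariant and hence equals $\nu$ by unique ergodicity of $g$. Therefore $\rho\le i\nu$ as measures, so $\rho\ll\nu$ with bounded Radon--Nikodym derivative $h:=d\rho/d\nu$. The $g^i$-invariance of $\rho$ translates to $h\circ g^i=h$ $\nu$-a.e., and $g^i$-ergodicity of $\nu$ from the first stage forces $h$ to be $\nu$-a.e.\ constant, equal to $1$ since $\rho$ is a probability; thus $\rho=\nu$. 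This yields item (1) and completes the proof plan.
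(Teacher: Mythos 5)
Your overall strategy coincides with the paper's: build a measure-theoretic model by extending $(X,\mu,f)$ by a $p$-cycle, realize it topologically via Lehrer's topologically mixing strengthening of Jewett--Krieger, and then combine unique ergodicity of $g$ with ergodicity of $g^i$ to conclude unique ergodicity of each $g^i$, $1\le i\le p-1$. Your Fourier computation in $\mathbb{Z}/p\mathbb{Z}$-characters is a self-contained substitute for the paper's use of Lemma \ref{Lemma 5.6} and Lemma \ref{Lemma 5.7} (weak mixing of $\mu$ makes the product of $f^i$ with the $p$-cycle ergodic for $i<p$), and your Radon--Nikodym argument at the end is a clean direct proof of what the paper deduces via Lemma \ref{Lem 2.8}/Lemma \ref{Lemma 2.14} (if $\nu$ were not the only $g^i$-invariant measure, $\nu$ would be a nontrivial average of $g^i$-ergodic measures and hence not $g^i$-ergodic).

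There is, however, one genuine gap. Lehrer's theorem as the paper states it (Lemma \ref{Lemma 5.5}) requires the underlying probability space to be a Lebesgue space, and by the paper's convention a Lebesgue space is isomorphic to $([0,1],\mathfrak{B}([0,1]),l)$, hence non-atomic. Your model $\tilde Y = X\times\mathbb{Z}/p\mathbb{Z}$ carries $\tilde\nu = \mu\times m$, which is non-atomic only if $\mu$ is; but a weakly mixing measure need not be non-atomic: $\mu=\delta_{x_0}$ at a fixed point $x_0$ of $f$ is weakly mixing and satisfies the theorem's hypotheses, in which case $\tilde\nu$ is a uniform measure on $p$ points and Lehrer's theorem does not apply. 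The paper avoids this by inserting one more factor: a dynamical system $(X_2,\mu_2,f_2)$ with $\mu_2$ non-atomic, weakly mixing, and of zero entropy (its Lemma \ref{Lemma 5.8}), so that the full product measure on $X\times X_1\times X_2$ is automatically non-atomic while the entropy and the ergodicity pattern of the iterates are unchanged (the extra factor is weakly mixing for every power, so Lemma \ref{Lemma 5.7} keeps the products ergodic for $i<p$). Adding this same auxiliary factor to your $\tilde Y$ closes the gap; the remainder of your argument then goes through as written.
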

  As a corollary, we have 
  \begin{maincorollary}\label{Corollary A}
  	Given $\alpha\geq0$ and a prime $p$, there exists a dynamical system $(Y,g)$, such that
  	\begin{enumerate}[(1)]
  		\item $(Y,\nu,g^i)$ is uniquely ergodic for any $1\leq i\leq p-1$, but $(Y,\nu,g^p)$ is not uniquely ergodic;
  		\item $(Y,g)$ is topological mixing;
  		\item $h_\nu(g)=h_{top}(g)=\alpha$, where $h_{top}(g)$ is the topological entropy of $(Y,g)$.
  	\end{enumerate}
  \end{maincorollary}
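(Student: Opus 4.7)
The plan is to reduce Corollary~\ref{Corollary A} to Theorem~\ref{Theorem E}. Given $\alpha\geq 0$ and a prime $p$, the strategy is first to supply a base dynamical system $(X,f)$ equipped with an invariant measure $\mu$ that is weakly mixing and has metric entropy exactly $\alpha$, and then to feed $(X,f,\mu)$ together with $p$ into Theorem~\ref{Theorem E}. Items (1) and (2) of the corollary will come straight from the theorem, while item (3) will be recovered from unique ergodicity of $(Y,g)$ together with the variational principle.

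For the base system I would split into two cases. When $\alpha>0$, take $(X,f)$ to be the full shift on $k$ symbols for some integer $k\geq 2$ with $\log k\geq \alpha$, and let $\mu$ be a Bernoulli measure with symbol weights $(p_0,\ldots,p_{k-1})$ satisfying $-\sum_i p_i\log p_i=\alpha$; continuity of the Shannon entropy on the probability simplex guarantees such weights exist, and the resulting Bernoulli measure is strongly mixing, hence weakly mixing in the sense used in the paper, with metric entropy exactly $\alpha$. When $\alpha=0$, Bernoulli shifts are unavailable, so I would instead invoke any classical weakly mixing zero-entropy measure-preserving transformation (for instance the Chacón transformation) and pass to a strictly ergodic topological model via the Jewett--Krieger theorem, obtaining $(X,f)$ whose unique invariant measure $\mu$ is weakly mixing with zero entropy.

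Feeding $(X,f,\mu)$ and $p$ into Theorem~\ref{Theorem E} produces $(Y,g,\nu)$ such that $(Y,\nu,g^i)$ is uniquely ergodic for every $1\leq i\leq p-1$, $(Y,\nu,g^p)$ is not uniquely ergodic, $(Y,g)$ is topological mixing, and $h_\nu(g)=h_\mu(f)=\alpha$. This immediately gives items (1) and (2) of the corollary and the equality $h_\nu(g)=\alpha$ in item (3). For the remaining identity $h_{top}(g)=\alpha$, specialising the first item to $i=1$ shows that $(Y,g)$ itself is uniquely ergodic with unique invariant measure $\nu$, whence the variational principle yields
\[
h_{top}(g)=\sup_{\eta\in\mathcal{M}_g(Y)}h_\eta(g)=h_\nu(g)=\alpha.
\]

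The only ingredient that is not completely routine is the zero-entropy instance of the base construction, since Bernoulli measures cannot be used there and one must appeal to a classical weakly mixing zero-entropy model; once that is in hand, everything else reduces to a direct application of Theorem~\ref{Theorem E} and a one-line invocation of the variational principle.
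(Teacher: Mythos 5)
Your proof is correct and follows essentially the same route as the paper: produce a weakly mixing base measure with metric entropy $\alpha$ (the paper packages this as Lemma~\ref{Lemma 5.9}), feed it and $p$ into Theorem~\ref{Theorem E}, and close with the variational principle applied to the uniquely ergodic $(Y,g)$. The only cosmetic difference is the source of the zero-entropy weakly mixing example when $\alpha=0$: the paper's Lemma~\ref{Lemma 5.8} extracts one from a residual set of invariant measures of a hyperbolic toral automorphism, whereas you invoke the Chac\'on transformation together with Jewett--Krieger to get a topological model; both serve equally well as input to Theorem~\ref{Theorem E}.
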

 Based on Corollary \ref{Corollary A}, we will shown the existences of dynamical systems in class (I), more precisely, let $\mathbb{T}^2$ denote the 2-torus, we have
 \begin{maintheorem}\label{Theorem F}
 	Given $\alpha\geq0$ and a prime $p$, there exists a minimal dynamical system $(\mathbb{T}^2,g)$, such that 
 	\begin{enumerate}[(1)]
 		\item $(\mathbb{T}^2,\nu,g^i)$ is uniquely ergodic for any $1\leq i\leq p-1$, but $(\mathbb{T}^2,\nu,g^p)$ is not uniquely ergodic;
 		\item $h_\nu(g)=h_{top}(g)=\alpha$, where $h_{top}(g)$ is the topological entropy of $(\mathbb{T}^2,g)$.
 	\end{enumerate}
 \end{maintheorem}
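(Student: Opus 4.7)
The plan is to produce $(\mathbb{T}^2,g)$ by combining Corollary~\ref{Corollary A} with a realization theorem of Denjoy--Rees / B\'eguin--Crovisier--J\"ager type, which allows one to insert an abstract minimal system with an ergodic invariant measure as an almost one-to-one factor of a minimal homeomorphism of $\mathbb{T}^2$ while controlling the topological entropy. First, apply Corollary~\ref{Corollary A} to obtain a dynamical system $(Y,h)$ with an $h$-invariant measure $\nu_0$ such that $(Y,\nu_0,h^i)$ is uniquely ergodic for $1\le i\le p-1$, $(Y,\nu_0,h^p)$ is not uniquely ergodic, $(Y,h)$ is topological mixing, and $h_{\nu_0}(h)=h_{top}(h)=\alpha$. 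Since $(Y,h)$ is uniquely ergodic, after replacing $Y$ by $\mathrm{supp}(\nu_0)$ we may assume $(Y,h)$ is minimal, and the construction behind Corollary~\ref{Corollary A} can be arranged so that $Y$ is a Cantor set (e.g., via a subshift model).

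Second, apply the Denjoy--Rees realization in the form developed by B\'eguin--Crovisier--J\"ager: there exist a minimal homeomorphism $g$ of $\mathbb{T}^2$, a continuous surjection $\pi:\mathbb{T}^2\to Y$ with $\pi\circ g=h\circ\pi$, and a $g$-invariant Borel probability measure $\nu$ on $\mathbb{T}^2$ such that $\pi_*\nu=\nu_0$, $\pi$ is one-to-one on a $g$-invariant Borel set $G'\subseteq\mathbb{T}^2$ of full $\nu$-measure, and the construction is arranged so that $h_{top}(g)=h_{top}(h)=\alpha$. Because $G'$ is $g^i$-invariant for every $i\ge 1$, the map $\pi$ is simultaneously an almost one-to-one factor map $(\mathbb{T}^2,g^i)\to(Y,h^i)$ for each such $i$.

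Third, transfer the properties through $\pi$. For $1\le i\le p-1$, any $g^i$-invariant Borel probability measure $\mu$ on $\mathbb{T}^2$ pushes forward to an $h^i$-invariant measure, which must equal $\nu_0$ by unique ergodicity of $(Y,\nu_0,h^i)$; since $\mu(G')=\nu_0(\pi(G'))=1$ and $\pi|_{G'}$ is a Borel bijection onto a full $\nu_0$-measure subset of $Y$, the measure $\mu$ is forced to coincide with $\nu$, giving unique ergodicity of $(\mathbb{T}^2,\nu,g^i)$. For $i=p$, take two distinct $h^p$-invariant measures $\mu_1\neq\mu_2$ on $Y$ and apply Markov--Kakutani to the non-empty weak-$*$ compact convex $(g^p)_*$-invariant set of Borel probability measures on $\mathbb{T}^2$ projecting to $\mu_j$, producing $g^p$-invariant lifts $\widetilde{\mu}_1\neq\widetilde{\mu}_2$ and hence failure of unique ergodicity for $(\mathbb{T}^2,\nu,g^p)$. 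Finally, $h_\nu(g)=h_{\nu_0}(h)=\alpha$ since almost one-to-one extensions preserve measure-theoretic entropy, and $h_{top}(g)=\alpha$ by construction, completing (1) and (2). The main obstacle is Step~2: carrying out the realization on $\mathbb{T}^2$ so that both the almost one-to-one property---needed to transfer the refined unique-ergodicity pattern at each iterate $g^i$---and the topological entropy equality $h_{top}(g)=\alpha$ are achieved simultaneously, which is precisely the delicate content of the Denjoy--Rees / B\'eguin--Crovisier--J\"ager machinery.
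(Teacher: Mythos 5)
The gap is in Step~2: you invoke a Denjoy--Rees realization as if it lets you insert an arbitrary minimal Cantor system as an almost one-to-one topological factor of a minimal homeomorphism of $\mathbb{T}^2$ while simultaneously controlling topological entropy. The theorem actually used in the paper (Lemma~\ref{Lemma 5.10}, B\'eguin--Crovisier--Le~Roux) is weaker and more constrained: it produces a minimal, uniquely ergodic $(\mathbb{T}^2,\nu,g)$ that is \emph{measure-theoretically isomorphic} to the given ergodic system, and it requires as a hypothesis that the given system has an irrational circle rotation as a measure-theoretic factor. That hypothesis does not follow from the output of Corollary~\ref{Corollary A} or from a Lehrer-style Cantor model; indeed no topologically mixing system can factor topologically onto a circle rotation, and there is no reason a measure-theoretic rotation factor exists either. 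The paper closes exactly this gap with Lemma~\ref{Lemma 5.11} (Konieczny--Kupsa--Kwietniak): one multiplies by a carefully chosen irrational rotation $R_\beta$, choosing $\beta$ so that the refined ergodicity pattern survives --- $(X\times\mathbb{T}^1,\mu\times\lambda_{\mathbb{T}^1},f^i\times R_\beta)$ is ergodic for $1\le i\le p-1$ and non-ergodic for $i=p$. Only after that does Lemma~\ref{Lemma 5.10} apply. Your proposal skips this rotation-factor step entirely, so Step~2 is not justified by the cited machinery.

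Two smaller points. First, the entropy conclusion is not obtained by ``arranging the construction so that $h_{top}(g)=\alpha$''; it falls out of the measure-theoretic isomorphism (which preserves $h_\nu$) together with the variational principle applied to the uniquely ergodic $(\mathbb{T}^2,\nu,g)$, giving $h_{top}(g)=h_\nu(g)=\alpha$. Second, even granting your topological factor map $\pi$, transferring unique ergodicity of $g^i$ for $1\le i\le p-1$ requires that the set $G'$ of singleton fibers be $\pi$-saturated and carry full measure for \emph{every} $g^i$-invariant measure, not just for $\nu$; you assert $\mu(G')=\nu_0(\pi(G'))=1$ without noting the saturation needed for that equality. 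The paper avoids this delicacy altogether by working purely measure-theoretically: the isomorphism transports ergodicity of each power, and combining ``$(\mathbb{T}^2,\nu,g)$ uniquely ergodic, $\nu$ ergodic for $g^i$'' with the structure result (Lemma~\ref{Lem 2.8} / Lemma~\ref{Lemma 2.14}) yields unique ergodicity of $g^i$ for $1\le i\le p-1$ and its failure at $i=p$.
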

  
\textbf{Organization of this paper.} In section \ref{section 2}, we introduce some preliminary results. In section \ref{section 3}, we prove Theorem \ref{Theorem B}. In section \ref{section 4}, we prove Theorem \ref{Theorem C} and Theorem \ref{Theorem D}. In section 5, we prove Theorem \ref{Theorem E}, Corollary \ref{Corollary A} and Theorem \ref{Theorem F}.

\section{Preliminaries}\label{section 2}
\subsection{Extensions}
Given two dynamical systems $(X,f)$ and $(Y,g)$, if there exists a continuous surjection $\pi:X\to Y$ with $\pi\circ f=g\circ\pi$, then we say that $\pi$ is a \emph{factor map}, $(Y,g)$ is a \emph{factor} of $(X,f)$ or $(X,f)$ is an \emph{extension} of $(Y,g)$. If further, $\pi$ is a homeomorphism, we say that $(X,f)$ and $(Y,g)$ is \emph{topological conjugate}. It's know that the factor of a topological transitive dynamical system is also topological transitive. 
\begin{Prop}\label{Proposition 2.1}
	Suppose that $(X,f)$ is an \emph{extension} of $(Y,g)$ and there exists a non-uniform $A\in C(Y,\mathrm{GL}(d))$, then there exists a non-uniform $B\in C(X,\mathrm{GL}(d))$.
\end{Prop}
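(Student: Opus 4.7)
My plan is to pull the non-uniform cocycle back along the factor map. Concretely, I will set $B := A \circ \pi \in C(X, \mathrm{GL}(d))$, verify that the cocycle generated by $B$ is literally the pullback of the cocycle generated by $A$, and then use surjectivity of $\pi$ to transfer any hypothetical uniform convergence of $B$ back to $A$, contradicting the hypothesis.

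First I compute $B(n,x)$. Iterating the intertwining relation $\pi \circ f = g \circ \pi$ gives $\pi \circ f^i = g^i \circ \pi$ for every $i \in \mathbb{Z}$, so for $n \geq 1$ and $x \in X$,
$$B(n,x) = A(\pi(f^{n-1}x))\, A(\pi(f^{n-2}x)) \cdots A(\pi(x)) = A(g^{n-1}\pi(x)) \cdots A(\pi(x)) = A(n, \pi(x)).$$
The analogous identity for $n < 0$ follows from the formula defining $A(n, \cdot)$ together with $\pi \circ f^{-1} = g^{-1} \circ \pi$. In particular, $\frac{1}{n}\log\|B(n,x)\| = \frac{1}{n}\log\|A(n, \pi(x))\|$ for every $x \in X$ and $n \geq 1$.

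Next I argue by contradiction. Suppose $B$ were uniform, so that $(\frac{1}{n}\log\|B(n,\cdot)\|)_{n \geq 1}$ converges uniformly on $X$, and is therefore uniformly Cauchy on $X$. Given any $\varepsilon > 0$, choose $N$ with $|\frac{1}{n}\log\|B(n,x)\| - \frac{1}{m}\log\|B(m,x)\|| < \varepsilon$ for all $x \in X$ and all $n,m \geq N$. Since $\pi$ is surjective, every $y \in Y$ can be written as $y = \pi(x)$ for some $x \in X$, and substituting the identity from the previous paragraph turns this into $|\frac{1}{n}\log\|A(n,y)\| - \frac{1}{m}\log\|A(m,y)\|| < \varepsilon$ for every $y \in Y$. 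Thus $(\frac{1}{n}\log\|A(n,\cdot)\|)_{n \geq 1}$ is uniformly Cauchy, hence uniformly convergent, on $Y$, contradicting the non-uniformity of $A$.

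There is no real obstacle: the whole content of the proposition is that continuous $\mathrm{GL}(d)$-cocycles pull back along factor maps, and that surjectivity lets one transfer uniform convergence between base and extension. This is why the statement is flagged in the paper as a basic fact; the only point to be careful about is to phrase the contradiction using the uniform Cauchy criterion (or equivalently a uniformly convergent subsequence), rather than assuming the limit function is already known on $Y$.
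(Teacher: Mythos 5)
Your proof is correct and follows exactly the same route as the paper: define $B := A \circ \pi$, observe the cocycle identity $B(n,x) = A(n,\pi(x))$, and use surjectivity of $\pi$ to transfer non-uniformity from $A$ to $B$. The paper states the final step more tersely, while you spell it out via the uniform Cauchy criterion, but the content is identical.
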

\begin{proof}
	Let $\pi: X\to Y$ be the factor map and $B=A\circ\pi$, then for any $n\geq 1$, $$B(n,x)=B(f^{n-1}x)B(f^{n-2}x)\cdots B(x)=A(\pi(f^{n-1}x))A(\pi(f^{n-2}x))\cdots A(\pi x)=A(n,\pi x).$$ Since $\pi$ is a surjection and $A$ is non-uniform, we have that $B$ is non-uniform.
\end{proof}
\subsection{Eigenfunctions and eigenvalues}
Given $\varphi\in C(X,\mathbb{C})\setminus\{0\}$, $\varphi$ is said to be an \emph{eigenfunction} for $(X,f)$ if there exists $\lambda\in\mathbb{C}$ such that $\varphi(fx)=\lambda\varphi(x)$ for any $x\in X$, such $\lambda$ is said to be the \emph{eigenvalue} for $(X,f)$ corresponding to the eigenfunction $\varphi$.
\begin{Lem}\cite[Theorem 5.17]{Walters-1982}\label{Lem2.2}
	Suppose that $(X,f)$ is topological transitive, $\varphi$ is a eigenfunction for $(X,f)$ and $\lambda$ is the eigenvalue corresponding to $\varphi$, then $|\lambda|=1$ and $|\varphi|$ is constant.
\end{Lem}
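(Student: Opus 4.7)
The plan is to split the statement into two independent parts. First I would establish $|\lambda|=1$ by comparing the sup-norms of $\varphi$ and $\varphi\circ f$; then I would deduce that $|\varphi|$ is constant by showing it is a continuous $f$-invariant function on a topologically transitive system.

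For the first part, set $M:=\sup_{x\in X}|\varphi(x)|$. Since $\varphi\in C(X,\mathbb{C})\setminus\{0\}$ and $X$ is compact, $M$ is a strictly positive real number. Before using the relation $\varphi\circ f=\lambda\varphi$ to pass to sup-norms, I would first rule out $\lambda=0$: otherwise $\varphi(fx)=0$ for all $x$, and since $f$ is a homeomorphism (hence surjective), $\varphi$ would vanish identically, contradicting $\varphi\neq 0$. With $\lambda\neq 0$ in hand, surjectivity of $f$ yields
\[
M=\sup_{y\in X}|\varphi(y)|=\sup_{x\in X}|\varphi(fx)|=|\lambda|\sup_{x\in X}|\varphi(x)|=|\lambda|M,
\]
and dividing by $M>0$ gives $|\lambda|=1$.

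For the second part, now that $|\lambda|=1$, the identity $|\varphi(fx)|=|\varphi(x)|$ holds for every $x\in X$, so the continuous function $|\varphi|:X\to\mathbb{R}$ is $f$-invariant. Since $(X,f)$ is topologically transitive on a compact metric space, a standard Baire-category argument produces a point $x_{0}\in X$ whose forward orbit $\mathrm{orb}(x_{0},f)$ is dense in $X$. Invariance makes $|\varphi|$ constantly equal to $|\varphi(x_{0})|$ on this orbit, and by continuity together with density this value extends to all of $X$, so $|\varphi|$ is the constant $|\varphi(x_{0})|$.

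There is no real obstacle here; the only mild subtlety is making sure the two ingredients that the compactness/non-triviality argument needs are actually present, namely $M>0$ (from $\varphi\neq 0$) and surjectivity of $f$ (from the standing assumption that $f$ is a homeomorphism). Note that topological transitivity is only used in the second half—the first half works for any continuous surjection. Since this statement is quoted as \cite[Theorem 5.17]{Walters-1982}, I would simply invoke the reference rather than write out the above, but the argument sketched is the natural short self-contained proof.
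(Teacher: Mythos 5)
Your argument is correct, and since the paper only cites \cite[Theorem 5.17]{Walters-1982} without reproducing a proof, your write-up is a fine self-contained substitute. Two small remarks on streamlining: once you know $M>0$, the identity $M=|\lambda|M$ already forces $|\lambda|=1$, so the separate step ruling out $\lambda=0$ is redundant; and your observation that the first half uses only surjectivity of $f$ (not transitivity) is accurate and slightly sharper than the usual textbook phrasing, which typically invokes a dense orbit for both conclusions.
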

$(X,f)$ is said to have \emph{topological discrete spectrum} if the smallest closed linear subspace of $C(X,\mathbb{C})$ containing the eigenfunctions of $(X,f)$ is $C(X,\mathbb{C})$.
\subsection{Equicontinuous dynamical systems}
$(X,f)$ is said to be \emph{equicontinuous}, if for any $\varepsilon>0$, there exists $\delta>0$ such that $d(x,y)<\delta$ implies that $d(f^nx,f^ny)<\varepsilon$ for any $n\in\mathbb{Z}$. If $(X,f)$ is equicontinuous, then $(X,f^n)$ is equicontinuous for any $n\in\mathbb{Z}$.
\begin{Lem}\label{Lem2.3}\cite{Walters-1982,Huang-Shao-Ye-2023}(Halmos-von Neumann Theorem) Suppose that $(X,f)$ is a topological transitive equicontinuous dynamical system, then it is topological conjugate to a minimal rotation on a compact abelian metric group and has topological discrete spectrum.
\end{Lem}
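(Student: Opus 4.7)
The plan is to prove this classical result (the Halmos--von Neumann theorem in its topological form) in three stages: first replace the metric by an $f$-invariant one and derive minimality; second, realize $X$ as a compact abelian group on which $f$ acts by translation; third, use Pontryagin duality to deduce discrete spectrum.

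First I would replace $d$ by the equivalent metric
\[
d'(x,y) := \sup_{n\in\mathbb{Z}} d(f^n x, f^n y),
\]
which is finite and generates the same topology thanks to equicontinuity applied in both time directions, and is manifestly $f$-invariant. Thus we may assume $f$ is an isometry of $(X,d')$. Combining isometry with topological transitivity forces minimality: the closure of any orbit is a closed invariant set, and isometries preserve the open neighbourhoods of a transitive point, so the orbit closure of every point equals $X$.

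Next, set $G := \overline{\{f^n : n\in\mathbb{Z}\}}$ inside $\mathrm{Homeo}(X)$ equipped with the uniform topology. By Arzel\`a--Ascoli, equicontinuity together with compactness of $X$ makes $G$ a compact subset, and since composition and inversion of isometries are continuous in this topology and the powers of $f$ commute, $G$ is a compact abelian topological group. Fix $x_0\in X$ and define the orbit map $\pi:G\to X$ by $\pi(g)=g(x_0)$. Minimality gives surjectivity; the stabilizer $H:=\{g\in G:g(x_0)=x_0\}$ is a closed subgroup, and $\pi$ descends to a continuous bijection $\bar\pi:G/H\to X$, hence a homeomorphism by compactness. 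Transporting the group structure of $G/H$ to $X$ turns $X$ into a compact abelian metric group, and the identity $\pi(f\cdot g)=f(\pi(g))$ shows that $f$ corresponds to translation by the coset $fH$, which is a minimal rotation because $\{f^n\}$ is dense in $G$.

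Finally, having identified $(X,f)$ with a minimal rotation $x\mapsto a\cdot x$ on a compact abelian group, any continuous character $\chi\in\widehat{X}$ satisfies $\chi(f(x))=\chi(a)\chi(x)$, so $\chi$ is an eigenfunction with eigenvalue $\chi(a)$. By Peter--Weyl (or Stone--Weierstrass applied to the character algebra, which separates points via Pontryagin duality), the linear span of $\widehat{X}$ is dense in $C(X,\mathbb{C})$, giving topological discrete spectrum. The main obstacle in this outline is the second stage: one must verify carefully that $\bar\pi:G/H\to X$ is a homeomorphism intertwining the $f$-action with translation, and that the resulting group operation on $X$ is well defined independently of the base point $x_0$ used (up to isomorphism). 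Once this geometric identification is in place, the first and third stages are essentially soft consequences of compactness and harmonic analysis on compact abelian groups.
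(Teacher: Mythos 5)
The paper states this lemma purely as a citation to the references Walters~(1982) and Huang--Shao--Ye~(2023) and supplies no proof of its own, so there is no internal argument to compare against. Your proposal is the standard proof of the topological Halmos--von Neumann theorem, and the three-stage plan is sound: pass to the $f$-invariant metric $d'(x,y)=\sup_{n\in\mathbb{Z}}d(f^nx,f^ny)$, which is finite, topologically equivalent by equicontinuity, and makes $f$ an isometry; transitivity plus isometry gives minimality by the usual triangle-inequality argument; the closure $G$ of $\{f^n\}$ in the uniform topology is a compact abelian group acting transitively on $X$, so the orbit map identifies $X$ with the homogeneous space $G/H$ and transports a compact abelian metric group structure to $X$ under which $f$ becomes rotation by the coset $fH$; finally each character of $X$ is an eigenfunction, and density of their span follows from Pontryagin duality and Stone--Weierstrass.

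Two small points are worth making explicit to fully close the argument. First, after invoking Arzel\`a--Ascoli you obtain compactness of $G$ only in $C(X,X)$, so you should observe that a uniform limit of $d'$-isometries of a compact metric space is again an isometry, and that an isometry of a compact metric space onto its image is automatically surjective; this places $G$ inside $\mathrm{Homeo}(X)$, and the same isometry property is what makes inversion continuous on $G$, so that $G$ is genuinely a compact topological group. Second, in the last stage you want density of the \emph{linear span} of $\widehat{X}$, not just of the algebra it generates, since the paper's definition of topological discrete spectrum is in terms of the closed linear span of eigenfunctions; this is fine because the span of $\widehat{X}$ is already a unital $*$-subalgebra of $C(X,\mathbb{C})$ (products and complex conjugates of characters are characters), so Stone--Weierstrass applies directly to the span. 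With these two remarks filled in, the proof is complete and matches the standard treatments in the cited sources.
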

\subsection{Maximal equicontinuous factors}
$(Y,g)$ is said to be a \emph{equicontinuous factor} of $(X,f)$ if $(Y,g)$ is equicontinuous and $(Y,g)$ is a factor of $(X,f)$. Suppose that $(Y,g)$ is a equicontinuous factor of $(X,f)$, it is said to be maximal, if every equicontinuous factor of $(X,f)$ is a equicontinuous factor of $(Y,g)$. Let $(X_{eq}, f_{eq})$ denote the maximal equicontinuous factors of $(X,f)$.
\begin{Lem}\cite[Theorem 2.12]{Huang-Shao-Ye-2023}\label{Lem2.4}
	Suppose that $(X,f)$ is a minimal dynamical system, then $(X,f)$ is topological weakly mixing if and only if $(X_{eq}, f_{eq})$ is trivial (i.e. $|X_{eq}|>1$).
\end{Lem}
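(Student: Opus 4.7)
The plan is to prove the two implications separately, using Lemmas \ref{Lem2.2} and \ref{Lem2.3} together with the classical link between continuous eigenfunctions of modulus one and equicontinuous factors. I read the conclusion ``$|X_{eq}|>1$'' in the excerpt as a typo for ``$|X_{eq}|=1$'', which is the content of ``$(X_{eq},f_{eq})$ trivial''.

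For the forward direction, assume $(X,f)$ is topologically weakly mixing and suppose for contradiction that $|X_{eq}|>1$. The product $\pi\times\pi:X\times X\to X_{eq}\times X_{eq}$ of the factor map $\pi:X\to X_{eq}$ with itself is a factor map intertwining $f\times f$ with $f_{eq}\times f_{eq}$, so topological transitivity transfers to $(X_{eq}\times X_{eq},f_{eq}\times f_{eq})$. By Lemma \ref{Lem2.3}, identify $(X_{eq},f_{eq})$ with a minimal rotation $R_a:g\mapsto ag$ on a non-trivial compact abelian metric group $G$. Pontryagin duality supplies a non-trivial character $\chi:G\to S^{1}$; then $F(g,h):=\chi(g)\overline{\chi(h)}$ is continuous, non-constant, and invariant under $R_a\times R_a$, contradicting topological transitivity of the product system.

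For the converse, assume $|X_{eq}|=1$. I would first rule out non-constant continuous eigenfunctions of $(X,f)$: if $\varphi\in C(X,\mathbb C)\setminus\{0\}$ satisfies $\varphi\circ f=\lambda\varphi$, Lemma \ref{Lem2.2} gives $|\lambda|=1$ and $|\varphi|$ constant, so after normalization $\varphi:X\to S^{1}$ is a factor map from $(X,f)$ onto the minimal equicontinuous system $(\varphi(X),R_\lambda)$; by maximality $(\varphi(X),R_\lambda)$ is a factor of $(X_{eq},f_{eq})$, which forces $|\varphi(X)|=1$ and $\varphi$ constant. The main step is the structural theorem that a minimal system with no non-constant continuous eigenfunctions is topologically weakly mixing. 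I would establish this via the regionally proximal relation $Q\subset X\times X$, namely the pairs $(x,y)$ such that for every $\varepsilon>0$ there exist $x',y'\in X$ and $n\in\mathbb Z$ with $d(x,x'),d(y,y'),d(f^{n}x',f^{n}y')<\varepsilon$: on a minimal system $Q$ is a closed $(f\times f)$-invariant equivalence relation whose quotient $X/Q$ realizes $(X_{eq},f_{eq})$, and topological weak mixing of $(X,f)$ is equivalent to $Q=X\times X$. Hence $|X_{eq}|=1$ forces $Q=X\times X$, yielding the desired conclusion.

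The main obstacle is this last structural step for the converse, namely verifying that $Q$ is an equivalence relation on minimal systems, that $X/Q$ is the maximal equicontinuous factor, and that topological weak mixing coincides with $Q=X\times X$. These facts rest on the Ellis--Gottschalk--Hedlund--Auslander theory of proximal and regionally proximal relations. Since the paper already quotes Huang--Shao--Ye 2023 for the Halmos--von Neumann theorem used in Lemma \ref{Lem2.3}, in a write-up I would simply cite their treatment for these structural facts rather than reconstruct the full classical machinery.
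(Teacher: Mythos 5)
The paper offers no proof of this lemma at all; it is a direct citation of \cite[Theorem 2.12]{Huang-Shao-Ye-2023}, so there is no ``paper's own proof'' to compare against. Your reading of ``$|X_{eq}|>1$'' as a typo for ``$|X_{eq}|=1$'' is correct --- ``trivial'' should mean a one-point factor.

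Your plan is sound, and it goes further than the paper in that it actually carries out the easy direction. The forward implication is complete: if $|X_{eq}|>1$, then $(X_{eq}\times X_{eq}, f_{eq}\times f_{eq})$ is a factor of the transitive product, hence transitive; identifying $(X_{eq},f_{eq})$ with a minimal rotation on a non-trivial compact abelian metric group $G$ via Lemma \ref{Lem2.3}, a non-trivial character $\chi$ gives $F(g,h)=\chi(g)\overline{\chi(h)}$, which is continuous, non-constant, and $(R_a\times R_a)$-invariant, contradicting transitivity. This is elementary and correct. For the converse, you correctly identify the real content: the Ellis--Veech--McMahon theory showing that on a minimal system the regionally proximal relation $Q$ is a closed invariant equivalence relation with $X/Q\cong X_{eq}$, together with the equivalence of topological weak mixing with $Q=X\times X$; and you state you would cite this, which is no weaker a stance than the paper's own. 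One small structural remark: the paragraph on ruling out non-constant continuous eigenfunctions is a detour --- once you know $X/Q\cong X_{eq}$ and that $Q=X\times X$ is equivalent to weak mixing, the hypothesis $|X_{eq}|=1$ gives $Q=X\times X$ immediately without mentioning eigenfunctions at all; conversely, deriving ``no non-constant continuous eigenfunctions $\Rightarrow$ weakly mixing'' from the $Q$-machinery requires re-proving that no such eigenfunctions forces $|X_{eq}|=1$, which is just the easy direction again. So that intermediate reduction is logically circular as a shortcut and best omitted, but it does not invalidate anything you wrote, since the final chain $|X_{eq}|=1 \Rightarrow Q=X\times X \Rightarrow$ weak mixing is the one you ultimately invoke.
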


\subsection{Some basic facts for invariant measures}
Given $\mu,\nu\in\mathcal{M}_f^e(X)$ with $\mu\neq\nu$, then $0\leq\mu(G_{f,\nu})\leq\mu(X\setminus G_{f,\mu})=0$, hence, $\mu(G_{f,\nu})=0$.
\begin{Lem}\label{Lemma 2.1}
	Suppose that $\mu_1,\mu_2,\cdots,\mu_m,\nu_1,\nu_2,\cdots,\nu_s\in\mathcal{M}_f^e(X)$, if $\mu_1\notin\{\nu_1,\nu_2,\cdots,\nu_s\}$, then for any $p_1,p_2,\cdots,p_m>0$ with $\sum_{i=1}^{m}p_i=1$ and $q_1,q_2,\cdots,q_s>0$ with $\sum_{j=1}^{s}q_j=1$, denote $\mu=\sum_{i=1}^{m}p_i\mu_i$ and $\nu=\sum_{j=1}^{s}q_j\nu_j$, we have that $\mu\neq\nu$.
\end{Lem}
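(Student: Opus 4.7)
The plan is to distinguish $\mu$ and $\nu$ by evaluating them on the single Borel set $G_{f,\mu_1}$ of generic points of $\mu_1$, exploiting the disjointness fact recorded immediately above the lemma: for any two distinct ergodic measures $\alpha,\beta\in\mathcal{M}_f^e(X)$, one has $\alpha(G_{f,\beta})=0$, while $\alpha(G_{f,\alpha})=1$.

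Concretely, I would first compute $\nu(G_{f,\mu_1})$. Since by hypothesis $\mu_1\notin\{\nu_1,\dots,\nu_s\}$, each $\nu_j$ is distinct from $\mu_1$; applying the cited disjointness fact gives $\nu_j(G_{f,\mu_1})=0$ for every $j$, hence
\[
\nu(G_{f,\mu_1})=\sum_{j=1}^{s}q_j\,\nu_j(G_{f,\mu_1})=0.
\]
Next I would compute $\mu(G_{f,\mu_1})$. The measure $\mu_1$ is ergodic, so $\mu_1(G_{f,\mu_1})=1$, and for each index $i\neq 1$ the summand $p_i\mu_i(G_{f,\mu_1})$ is non-negative (with value $0$ if $\mu_i\neq\mu_1$ and $p_i$ if $\mu_i=\mu_1$). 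In any event,
\[
\mu(G_{f,\mu_1})=\sum_{i=1}^{m}p_i\,\mu_i(G_{f,\mu_1})\geq p_1>0.
\]
Comparing the two displays yields $\mu(G_{f,\mu_1})>0=\nu(G_{f,\mu_1})$, and therefore $\mu\neq\nu$.

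There is no real obstacle here: the only subtlety is recognizing that the $\mu_i$ and $\nu_j$ are not assumed to be pairwise distinct, so one must frame the estimate on $\mu(G_{f,\mu_1})$ as a lower bound (using just the $i=1$ term) rather than an exact value. The hypothesis $\mu_1\notin\{\nu_1,\ldots,\nu_s\}$, however, does force every $\nu_j$-contribution to vanish, which is what makes the one-sided comparison work.
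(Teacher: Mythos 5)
Your argument is exactly the paper's: evaluate both measures on $G_{f,\mu_1}$ and use the disjointness of generic sets of distinct ergodic measures. You are in fact slightly more careful than the paper, which writes $\mu(G_{f,\mu_1})=p_1$ (implicitly assuming the $\mu_i$ are distinct), whereas your lower bound $\mu(G_{f,\mu_1})\geq p_1$ covers the case of repeated $\mu_i$'s as well.
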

\begin{proof}
	Since $\mu(G_{f,\mu_1})=p_1>0=\nu(G_{f,\mu_1})$, we have that $\mu\neq\nu$.
\end{proof}
\subsection{Some basic facts for uniform cocycles} In this subsection, we introduce some basic facts for uniform cocycles.
\begin{Lem}\cite[Lemma 2.2]{Walters-1986}\label{Lemma 2.2}
	Suppose that $(X,\mu,f)$ is uniquely ergodic, if $A\in C(X,\mathrm{GL}(d))$ is uniform, then $\frac{1}{n}\log\|A(n,x)\|$ converges uniformly to a constant on $X$.
\end{Lem}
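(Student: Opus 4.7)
The plan is to let $\psi(x):=\lim_{n\to\infty}\frac{1}{n}\log\|A(n,x)\|$ be the uniform limit guaranteed by the hypothesis, and show that $\psi$ is a constant function equal to $\Lambda_\mu(A)$. The function $\psi$ is continuous because each $\frac{1}{n}\log\|A(n,\cdot)\|$ is continuous on $X$ and the convergence is uniform. The strategy then splits into two steps: first establish that $\psi$ is $f$-invariant, then invoke unique ergodicity to force it to be constant.

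For the $f$-invariance, I would use the cocycle identity
\[
A(n+1,x)=A(f^n x)A(f^{n-1}x)\cdots A(fx)\cdot A(x)=A(n,fx)\cdot A(x),
\]
which gives $\|A(n+1,x)\|\leq\|A(n,fx)\|\cdot\|A(x)\|$ and $\|A(n,fx)\|\leq\|A(n+1,x)\|\cdot\|A(x)^{-1}\|$. Since $X$ is compact and $A\in C(X,\mathrm{GL}(d))$, the quantity $C:=\sup_{x\in X}\log\max(\|A(x)\|,\|A(x)^{-1}\|)$ is finite, so
\[
\bigl|\log\|A(n,fx)\|-\log\|A(n+1,x)\|\bigr|\leq C\quad\text{for all }x\in X,\ n\geq 1.
\]
Dividing by $n$ and letting $n\to\infty$ gives $\psi(fx)=\psi(x)$ for every $x\in X$, using also that $\frac{1}{n}\log\|A(n+1,x)\|=\frac{n+1}{n}\cdot\frac{1}{n+1}\log\|A(n+1,x)\|\to\psi(x)$.

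To conclude, I apply unique ergodicity to the continuous $f$-invariant function $\psi$. Since $S_n\psi\equiv n\psi$, the Birkhoff average $\frac{1}{n}S_n\psi$ equals $\psi$ identically, while the uniquely ergodic property forces $\frac{1}{n}S_n\psi$ to converge uniformly on all of $X$ to $\int\psi\,\mathrm{d}\mu$. Hence $\psi\equiv\int\psi\,\mathrm{d}\mu$ is constant, and integrating the pointwise uniform limit identifies this constant as $\Lambda_\mu(A)$ via the definition of the latter.

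The only genuinely delicate point is the passage from the uniform convergence of $\frac{1}{n}\log\|A(n,x)\|$ to the $f$-invariance of the limit; once that is in place, the unique-ergodicity argument is routine. The bound $C$ coming from compactness of $X$ and continuity of both $A$ and $A^{-1}$ is what makes the telescoping between $A(n,fx)$ and $A(n+1,x)$ harmless in the limit, and this is where the hypothesis $A\in C(X,\mathrm{GL}(d))$ (not merely measurable) is essential.
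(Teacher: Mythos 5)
Your proof is correct and follows what is essentially the standard (and, to my knowledge, Walters's own) line of argument: the uniform limit $\psi$ is continuous; the cocycle relation $A(n+1,x)=A(n,fx)A(x)$ together with compactness of $X$ and continuity of $A,A^{-1}$ gives $|\log\|A(n,fx)\|-\log\|A(n+1,x)\||\leq C$ uniformly, whence $\psi\circ f=\psi$; and unique ergodicity applied to the continuous invariant function $\psi$ (whose Birkhoff averages are identically $\psi$ but must converge uniformly to $\int\psi\,\mathrm{d}\mu$) forces $\psi$ to be constant. All steps are sound, including the side observation identifying the constant with $\Lambda_\mu(A)$, which the paper records separately in Proposition~\ref{Prop-2.2} after invoking Kingman's theorem.
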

Combing with Kingman's subadditive ergodic theorem, we have that
\begin{Prop}\label{Prop-2.2}
	Suppose that $(X,\mu,f)$ is uniquely ergodic, then $A\in C(X,\mathrm{GL}(d))$ is uniform if and only if $\frac{1}{n}\log\|A(n,x)\|$ converges uniformly to $\Lambda_{\mu}(A)$ on $X$. 
\end{Prop}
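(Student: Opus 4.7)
The proposition asserts an equivalence, and the backward direction is immediate from the definition of a uniform cocycle, so the content lies entirely in the forward implication. The plan is to combine Lemma \ref{Lemma 2.2} (Walters) with Kingman's subadditive ergodic theorem to identify the uniform limit.

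First I would observe that if $\frac{1}{n}\log\|A(n,x)\|$ converges uniformly on $X$ to $\Lambda_\mu(A)$, then by definition $A$ is uniform. For the nontrivial direction, assume $A\in C(X,\mathrm{GL}(d))$ is uniform. Since $(X,\mu,f)$ is uniquely ergodic, Lemma \ref{Lemma 2.2} immediately applies and gives a real constant $c$ such that
\[
\frac{1}{n}\log\|A(n,x)\| \longrightarrow c \quad\text{uniformly on } X.
\]
It remains to identify $c$ with $\Lambda_\mu(A)$.

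To do this, recall that the sequence $(\log\|A(n,x)\|)_{n\geq 1}$ is subadditive and $\log\|A(\cdot)\|\in L^1(\mu)$ (since $X$ is compact and $A$ is continuous with values in $\mathrm{GL}(d)$). Kingman's subadditive ergodic theorem then yields a $\mu$-full measure set on which $\frac{1}{n}\log\|A(n,x)\|\to\Lambda_\mu(A)$. Since the uniform limit $c$ must agree with this pointwise limit $\mu$-almost everywhere (the measure $\mu$ is not identically zero, so in fact the agreement holds on a nonempty set), we conclude $c=\Lambda_\mu(A)$. Alternatively, one can bypass Kingman entirely: uniform convergence to $c$ lets us interchange limit and integral to get
\[
c = \lim_{n\to\infty}\frac{1}{n}\int\log\|A(n,x)\|\,\mathrm{d}\mu(x) = \Lambda_\mu(A),
\]
by the very definition of $\Lambda_\mu(A)$ given in the paper.

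There is no real obstacle here; the proposition is a clean packaging of two previously available facts (Lemma \ref{Lemma 2.2} and the definition/Kingman formula for $\Lambda_\mu(A)$). The only point requiring mild care is that Lemma \ref{Lemma 2.2} tells us the uniform limit exists and is a \emph{constant} without telling us which constant, so the role of Kingman (or of a single integration under uniform convergence) is to pin down that constant as $\Lambda_\mu(A)$.
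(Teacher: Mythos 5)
Your argument is correct and matches the paper's approach exactly: the paper derives the proposition by combining Lemma \ref{Lemma 2.2} (uniformity forces uniform convergence to a constant) with Kingman's subadditive ergodic theorem (which identifies that constant as $\Lambda_\mu(A)$). Your alternative identification of the constant by integrating the uniform limit is equally valid and arguably cleaner, but it amounts to the same observation.
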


\begin{Prop}\label{Proposition 2.7}
	Suppose that $(X,\mu,f)$ is uniquely ergodic, if there exist $m\in\mathbb{N}^+$ and a non-uniform $A\in C(X,\mathrm{GL}(m))$, then for any $p\geq m$, there exists a non-uniform $B\in C(X,\mathrm{GL}(p))$.
\end{Prop}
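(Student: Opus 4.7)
The plan is to promote $A$ to the larger dimension by a block-diagonal construction, tuning a scalar so that the $A$-block is responsible for the norm of the big cocycle at every point once $n$ is large. Concretely, I would fix a scalar $\lambda>0$ (chosen below) and set
\[
B(x) \;=\; \begin{pmatrix} \lambda\,A(x) & 0 \\ 0 & I_{p-m} \end{pmatrix},
\]
so that $B\in C(X,\mathrm{GL}(p))$. Working with the operator norm induced by the Euclidean norm on $\mathbb{R}^p$, the block-diagonality of every product $B(n,x)$ gives the clean identity $\|B(n,x)\| = \max\bigl(\lambda^n\|A(n,x)\|,\,1\bigr)$ for every $n\geq 1$ and every $x\in X$.

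The heart of the argument is a uniform lower bound on $\|A(n,x)\|$. The inequality $\|M\|\geq |\det M|^{1/m}$ for $M\in\mathrm{GL}(m)$ (the largest singular value dominates the geometric mean of the singular values), together with the multiplicativity $\det A(n,x) = \prod_{i=0}^{n-1}\det A(f^i x)$, yields
\[
\frac{1}{n}\log\|A(n,x)\| \;\geq\; \frac{1}{nm}\sum_{i=0}^{n-1}\log|\det A(f^ix)|.
\]
Since $\log|\det A|$ is continuous on $X$ and $(X,\mu,f)$ is uniquely ergodic, the Birkhoff averages on the right converge uniformly to $L/m$, where $L:=\int\log|\det A|\,d\mu$ is finite (because $A$ is continuous with values in $\mathrm{GL}(m)$ on a compact space). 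Choosing $\lambda$ with $\log\lambda > 1 - L/m$, I obtain $\lambda^n\|A(n,x)\| > e^{n/2}$ uniformly on $X$ for all $n\geq N_0$, so the ``$1$'' in the maximum is irrelevant, and $\|B(n,x)\| = \lambda^n\|A(n,x)\|$ for all $x\in X$ and all $n\geq N_0$.

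Taking logs and dividing by $n$, this gives $\frac{1}{n}\log\|B(n,x)\| = \log\lambda + \frac{1}{n}\log\|A(n,x)\|$ uniformly in $x$ once $n\geq N_0$. If $B$ were uniform, the left-hand side would converge uniformly on $X$, whence so would $\frac{1}{n}\log\|A(n,x)\|$, contradicting the hypothesis that $A$ is non-uniform. Hence $B$ is non-uniform, as required.

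The main obstacle is precisely the uniform lower bound in the second paragraph: without it, the two branches of the maximum in $\|B(n,x)\|$ could switch from point to point in ways that smooth out the oscillations of $\frac{1}{n}\log\|A(n,x)\|$ responsible for non-uniformity. The determinant trick combined with the uniform convergence of Birkhoff averages for continuous observables under unique ergodicity is exactly what removes this possibility, and is the only place where the hypothesis that $(X,\mu,f)$ is uniquely ergodic enters the proof.
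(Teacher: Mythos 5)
Your proof is correct and takes essentially the same route as the paper: both use the block-diagonal construction $\mathrm{diag}\{\cdot,I_{p-m}\}$ together with the determinant inequality $\|M\|\geq|\det M|^{1/m}$ and the uniform convergence of Birkhoff averages under unique ergodicity. The only difference is cosmetic: the paper normalizes $A$ pointwise to $\tilde{A}=|\det A|^{-1/m}A$ so that $\|\tilde{A}(n,x)\|\geq1$ holds for all $n$, whereas you scale by a global constant $\lambda$ and need an ``eventually for $n\geq N_0$'' argument, but both moves serve the identical purpose of making the $A$-block dominate the identity block.
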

\begin{proof}
	Let $\tilde{A}=\frac{1}{|\det(A)|^{\frac{1}{m}}}A$, then $\tilde{A}\in C(X,\mathrm{GL}(m))$ with $|\det(\tilde{A})|\equiv1$ and $$\frac{1}{n}\log\|\tilde{A}(n,x)\|=\frac{1}{n}\log\|A(n,x)\|-\frac{1}{m}\frac{1}{n}\sum_{i=0}^{n-1}\log|\det(A(f^ix))|.$$ Hence, $\tilde{A}$ is non-uniform. Let $\|\cdot\|_\infty$ denote the $\infty$-norm. Since $|\det(\tilde{A})|\equiv1$, we have that $|\det(\tilde{A}(n,x))|=1$ for any $n\geq 1$ and $x\in X$. As a result, $\|\tilde{A}(n,x)\|_\infty\geq1$ for any $n\geq 1$ and $x\in X$. Given $p\geq m+1$, denote $B=\mathrm{diag}\left\{\tilde{A},I_{p-m}\right\}$, then $B\in C(X,\mathrm{GL}(p))$ and for any $n\geq 1$ and $x\in X$, $$\|B(n,x)\|_\infty=\max\left\{\|\tilde{A}(n,x)\|_\infty,1\right\}=\|\tilde{A}(n,x)\|_\infty.$$
	Therefore, $B$ is non-uniform.
\end{proof}

\subsection{$f$-ergodic but not $f^d$-ergodic}\label{section 2.7} When $(X,\mu,f)$ is uniquely ergodic and $(X,\mu,f^d)$ is not uniquely ergodic for some $d\geq2$, $\mathcal{M}_{f^d}^e(X)$ can be generated by an element in it. More precisely, when $d=2$, the following was shown by Walters.
\begin{Lem}\cite[Lemma 2.6]{Walters-1986}
	Suppose that $(X,\mu,f)$ is uniquely ergodic and $(X,\mu,f^2)$ is not uniquely ergodic, then there exists $\nu\in\mathcal{M}_{f^2}^e(X)$ such that $\mathcal{M}_{f^2}^e(X)=\{\nu,\nu\circ f^{-1}\}$ and $\mu=\frac{1}{2}(\nu+\nu\circ f^{-1})$.
\end{Lem}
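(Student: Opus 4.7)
The plan is to analyze the push-forward action $\nu \mapsto f_*\nu := \nu \circ f^{-1}$ on the set $\mathcal{M}_{f^2}^e(X)$ of $f^2$-ergodic probability measures, and show that it is a fixed-point free involution with a single orbit. Two simple structural facts come first. Because $\nu$ is $f^2$-invariant, $(f_*)^2\nu = (f^2)_*\nu = \nu$, and a direct check shows $f_*\nu \in \mathcal{M}_{f^2}^e(X)$ whenever $\nu \in \mathcal{M}_{f^2}^e(X)$ (since $A$ is $f^2$-invariant iff $f^{-1}A$ is), so $f_*$ is a well-defined involution on this set. Secondly, for any such $\nu$, the measure $\tfrac{1}{2}(\nu + f_*\nu)$ is $f$-invariant, hence by unique ergodicity of $(X,\mu,f)$ it must coincide with $\mu$. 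The target statement is therefore equivalent to showing $\mathcal{M}_{f^2}^e(X)$ has exactly two elements forming a single $f_*$-orbit.

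Next I would show $f_*$ acts freely on $\mathcal{M}_{f^2}^e(X)$. Suppose $f_*\nu = \nu$ for some $\nu \in \mathcal{M}_{f^2}^e(X)$; then $\nu$ is $f$-invariant and unique ergodicity forces $\nu = \mu$, so $\mu$ itself would be $f^2$-ergodic, i.e.\ an extreme point of $\mathcal{M}_{f^2}(X)$. But $(X,f^2)$ is not uniquely ergodic, so by ergodic decomposition $|\mathcal{M}_{f^2}^e(X)| \geq 2$; picking $\nu' \in \mathcal{M}_{f^2}^e(X)$ with $\nu' \neq \mu$, the averaging identity writes $\mu = \tfrac{1}{2}(\nu' + f_*\nu')$, a non-trivial convex combination of $f^2$-invariant measures, contradicting extremality of $\mu$. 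Hence $f_*\nu \neq \nu$ for every $\nu \in \mathcal{M}_{f^2}^e(X)$, so each $f_*$-orbit has size exactly two, and for every such $\nu$ one has $\mu = \tfrac{1}{2}(\nu + f_*\nu)$.

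Finally, to conclude that only one orbit exists I would invoke Lemma~\ref{Lemma 2.1} applied to the transformation $f^2$ (its proof uses only generic points and is transformation-agnostic). If two distinct orbits $\{\nu_1, f_*\nu_1\}$ and $\{\nu_2, f_*\nu_2\}$ coexisted, then $\nu_1 \notin \{\nu_2, f_*\nu_2\}$, and the equality $\tfrac{1}{2}(\nu_1 + f_*\nu_1) = \mu = \tfrac{1}{2}(\nu_2 + f_*\nu_2)$ would directly contradict the lemma. The step I expect to require the most care is the freeness argument, where one must combine unique ergodicity of $f$ with the fact that non-unique ergodicity of $f^2$ furnishes a second ergodic measure, and then play extremality of a hypothetical $f^2$-ergodic $\mu$ against the averaging identity; once freeness is in hand, both the orbit structure and the cardinality $|\mathcal{M}_{f^2}^e(X)| = 2$ follow immediately.
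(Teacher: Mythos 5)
Your proof is correct and follows essentially the same route as the paper's proof of the more general Lemma \ref{Lem 2.8} (which specializes to this statement at $d=2$): both rest on the observation that $\tfrac{1}{2}(\nu + \nu\circ f^{-1})$ is $f$-invariant and hence equals $\mu$ by unique ergodicity, followed by an application of Lemma \ref{Lemma 2.1} (with $f$ replaced by $f^2$) to rule out any ergodic measure outside $\{\nu,\nu\circ f^{-1}\}$. The one nonessential addition in your write-up is the separate preliminary step establishing freeness of the push-forward involution via extremality; in the paper this is not needed for $d=2$, since once $\mathcal{M}_{f^2}^e(X)=\{\nu,\nu\circ f^{-1}\}$ is known, the hypothesis that $(X,\mu,f^2)$ is not uniquely ergodic immediately forces $\nu\neq\nu\circ f^{-1}$ (the paper only needs a freeness-type argument, via the minimal period of $\nu$ under push-forward, in the prime case of Lemma \ref{Lemma 2.14}).
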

We can generalize the result to any $d\geq2$. 
\begin{Lem}\label{Lem 2.8}
	Suppose that $(X,\mu,f)$ is uniquely ergodic and $(X,\mu,f^d)$ is not uniquely ergodic for some $d\geq2$, then there exists $\nu\in\mathcal{M}_{f^d}^e(X)$, such that $\mathcal{M}_{f^d}^e(X)=\{\nu\circ f^{-i}: 0\leq i\leq d-1\}$ and $\mu=\frac{1}{d}\sum_{i=0}^{d-1}\nu\circ f^{-i}$.
\end{Lem}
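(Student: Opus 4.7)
The plan is to pick any $\nu \in \mathcal{M}_{f^d}^e(X)$, transport it along the $f$-orbit to generate the list $\nu\circ f^{-i}$ for $0\le i\le d-1$, and show that these exhaust $\mathcal{M}_{f^d}^e(X)$ while averaging to $\mu$.

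First, I would check that for each $i$, the measure $\nu \circ f^{-i}$ lies in $\mathcal{M}_{f^d}^e(X)$: $f^d$-invariance follows from the commutation $f^d \circ f^{-i} = f^{-i} \circ f^d$, and ergodicity follows from the observation that the preimage $f^{-i}(A)$ of an $f^d$-invariant set $A$ is again $f^d$-invariant, so a putative $f^d$-invariant set of intermediate $\nu\circ f^{-i}$-measure pulls back to one contradicting ergodicity of $\nu$.

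Second, I would form the average $\tilde{\mu} := \frac{1}{d} \sum_{i=0}^{d-1} \nu \circ f^{-i}$ and verify that it is $f$-invariant: pushing forward by $f$ shifts the index of the sum cyclically, and the identity $\nu \circ f^{-d} = \nu$ closes the cycle. Unique ergodicity of $(X,\mu,f)$ then forces $\tilde{\mu} = \mu$, yielding the decomposition formula. To obtain the classification of $\mathcal{M}_{f^d}^e(X)$, given any $\eta \in \mathcal{M}_{f^d}^e(X)$, the same averaging argument applied to $\eta$ yields $\mu = \frac{1}{d}\sum_{i=0}^{d-1} \eta \circ f^{-i}$. Equating this with the decomposition obtained from $\nu$ and invoking Lemma \ref{Lemma 2.1} — which detects any ergodic component appearing on one side but not the other — forces $\eta \in \{\nu \circ f^{-i} : 0 \leq i \leq d-1\}$.

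The only technical wrinkle, and the point I expect to be most delicate, is that the measures $\nu \circ f^{-i}$ need not all be distinct: if $\nu \circ f^{-k} = \nu$ for some minimal $1 \leq k \leq d-1$, then a division-with-remainder argument against $f^d$-invariance gives $k \mid d$, and the list above contains $k$ distinct elements each repeated $d/k$ times. Before applying Lemma \ref{Lemma 2.1}, I would collapse such repetitions so that both convex combinations in the identity $\frac{1}{d}\sum_{i=0}^{d-1}\eta\circ f^{-i}=\frac{1}{d}\sum_{i=0}^{d-1}\nu\circ f^{-i}$ are expressed with positive weights on distinct ergodic measures; the hypothesis of Lemma \ref{Lemma 2.1} then applies verbatim to yield the contradiction. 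The hypothesis that $(X,\mu,f^d)$ is not uniquely ergodic is never actually needed for the argument, but it guarantees that the cyclic orbit $\{\nu\circ f^{-i}\}$ has at least two distinct elements, so the conclusion is nontrivial.
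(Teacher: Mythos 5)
Your proposal is correct and follows essentially the same route as the paper: choose an arbitrary $\nu\in\mathcal{M}_{f^d}^e(X)$, use unique ergodicity of $(X,\mu,f)$ to force $\mu=\frac{1}{d}\sum_{i=0}^{d-1}\nu\circ f^{-i}$, then compare with the analogous decomposition from any hypothetical $\tilde\nu\notin\{\nu\circ f^{-i}\}$ via Lemma~\ref{Lemma 2.1}. The ``technical wrinkle'' you flag about repeated measures is in fact harmless as stated in the paper: Lemma~\ref{Lemma 2.1} nowhere requires the $\mu_i$ or $\nu_j$ to be distinct, since the argument only uses $\mu(G_{f,\mu_1})>0=\nu(G_{f,\mu_1})$, which persists under repetition, so the collapsing step you propose is a reasonable precaution but not logically necessary.
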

\begin{proof}
	Choose $\nu\in\mathcal{M}_{f^d}^e(X)$, then $\nu\circ f^{-d}=\nu$ and for any $1\leq k\leq d-1$, $\nu\circ f^{-k}\in\mathcal{M}_{f^d}^e(X)$, since $(X,\mu,f)$ is uniquely ergodic, we have that 
	\begin{equation}
		\mu=\frac{1}{d}(\nu+\nu\circ f^{-1}+\cdots+\nu\circ f^{-(d-1)}).
	\end{equation} 
	If there exists $\tilde{\nu}\in\mathcal{M}_{f^d}^e(X)\setminus\{\nu\circ f^{-i}: 0\leq i\leq d-1\}$, then similarly, 
	\begin{equation}
		\mu=\frac{1}{d}(\tilde{\nu}+\tilde{\nu}\circ f^{-1}+\cdots+\tilde{\nu}\circ f^{-(d-1)}).
	\end{equation} 
	It's a contradiction to Lemma \ref{Lemma 2.1}. Hence, $\mathcal{M}_{f^d}^e(X)=\{\nu\circ f^{-i}: 0\leq i\leq d-1\}$.
\end{proof}

\begin{Lem}\label{Lemma 2.12}
	Suppose that $(X,\mu,f)$ is uniquely ergodic, $p$ and $q$ are primes such that $(X,\mu,f^{pq})$ is not uniquely ergodic, then either $(X,\mu,f^p)$ is not uniquely ergodic or $(X,\mu,f^q)$ is not uniquely ergodic.
\end{Lem}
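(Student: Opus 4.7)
My plan is to prove the contrapositive: if both $(X, \mu, f^p)$ and $(X, \mu, f^q)$ are uniquely ergodic, then so is $(X, \mu, f^{pq})$. I would assume for contradiction that $(X, \mu, f^{pq})$ is not uniquely ergodic and apply Lemma \ref{Lem 2.8} with $d = pq$ to obtain $\nu \in \mathcal{M}_{f^{pq}}^e(X)$ such that $\mathcal{M}_{f^{pq}}^e(X) = \{\nu \circ f^{-i} : 0 \le i \le pq - 1\}$ and $\mu = \frac{1}{pq} \sum_{i=0}^{pq-1} \nu \circ f^{-i}$. The strategy is to produce a second convex representation of $\mu$ by $f^{pq}$-ergodic measures that is incompatible with the one above via Lemma \ref{Lemma 2.1}.

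The main subtlety, which I expect to be the principal obstacle, is to first check that the $pq$ measures $\nu \circ f^{-i}$ are pairwise distinct. For this I would analyze the stabilizer $H = \{k \in \mathbb{Z} : \nu \circ f^{-k} = \nu\}$, which is a subgroup of $\mathbb{Z}$ containing $pq$, so $H = k_0\mathbb{Z}$ with $k_0 \in \{1, p, q, pq\}$ by primality. In each case $k_0 \in \{1, p, q\}$, the measure $\nu$ is $f^{k_0}$-invariant; since $f^{pq}$ is a power of $f^{k_0}$, every $f^{k_0}$-invariant set is $f^{pq}$-invariant, so the $f^{pq}$-ergodicity of $\nu$ upgrades to $f^{k_0}$-ergodicity, and the hypothesized unique ergodicity of $(X, \mu, f^{k_0})$ (which is $f$, $f^p$, or $f^q$) forces $\nu = \mu$. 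But then all $\nu \circ f^{-i}$ coincide with $\mu$, collapsing $\mathcal{M}_{f^{pq}}^e(X)$ to a singleton and contradicting non-unique ergodicity. Hence $k_0 = pq$.

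Finally, I would form $\tau := \frac{1}{p} \sum_{j=0}^{p-1} \nu \circ f^{-jq}$. A direct shift-of-index calculation using $\nu \circ f^{-pq} = \nu$ gives $\tau \circ f^{-q} = \tau$, so $\tau \in \mathcal{M}_{f^q}(X)$; unique ergodicity of $(X, \mu, f^q)$ then forces $\tau = \mu$. This yields two representations
\[
\mu = \frac{1}{pq} \sum_{i=0}^{pq-1} \nu \circ f^{-i} = \frac{1}{p} \sum_{j=0}^{p-1} \nu \circ f^{-jq}
\]
of $\mu$ as convex combinations of $f^{pq}$-ergodic measures. The measure $\nu \circ f^{-1}$ appears in the first sum, but cannot appear in the second: since the $pq$ summand measures are distinct by the previous step, $\nu \circ f^{-1} = \nu \circ f^{-jq}$ would force $jq \equiv 1 \pmod{pq}$, hence $q \mid 1$, which is absurd. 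Applying Lemma \ref{Lemma 2.1} with $f$ replaced by $f^{pq}$ and $\mu_1 = \nu \circ f^{-1}$ then delivers the contradiction.
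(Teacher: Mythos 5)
Your proposal is correct, and it follows essentially the same route as the paper: apply Lemma~\ref{Lem 2.8} to $f^{pq}$, analyze the period of $\nu$ under the $f^{-1}$-action, and use Lemma~\ref{Lemma 2.1} to separate averages over distinct sub-orbits. The paper argues directly (WLOG assume $f^q$ is uniquely ergodic, then show $f^p$ is not), sets $m=\min\{i\geq 1:\nu\circ f^{-i}=\nu\}$ and in the cases $m=p$, $m=q$ immediately exhibits two distinct $f^p$- or $f^q$-invariant measures with no detour through ergodicity of $\nu$; your contrapositive framing forces you to rule out $k_0\in\{1,p,q\}$ via the ``ergodicity upgrade,'' which is a valid but slightly longer step, after which your averaging of $\nu\circ f^{-jq}$ over $j$ is a mirror image of the paper's averaging of $\nu\circ f^{-jp-i}$. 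These are organizational differences, not a different method.
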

\begin{proof}
	Without loss of generality, we can assume that $(X,\mu,f^q)$ is uniquely ergodic. Similarly, by Lemma \ref{Lem 2.8}, there exists $\nu\in\mathcal{M}_{f^{pq}}^e(X)$, $\mathcal{M}_{f^{pq}}^e(X)=\{\nu\circ f^{-i}: 0\leq i\leq pq-1\}$ and $\mu=\frac{1}{pq}\sum_{i=0}^{pq-1}\nu\circ f^{-i}$. Let $m=\min\{1\leq i\leq pq:\nu\circ f^{-i}=\nu\}$, then $m\geq 2$ and $\nu\circ f^{-i}\neq\nu\circ f^{-j}$ for any $0\leq i<j\leq m-1$. Since $\nu\circ f^{-pq}=\nu$, we have that $m$ divides $pq$, hence, $m=p$, $m=q$ or $m=pq$. 
	
	If $m=p$, $\nu,\nu\circ f^{-1}\in\mathcal{M}_{f^p}(X)$, hence, $(X,\mu,f^p)$ is not uniquely ergodic. 
	
	If $m=q$, $\nu,\nu\circ f^{-1}\in\mathcal{M}_{f^q}(X)$, hence, $(X,\mu,f^q)$ is not uniquely ergodic, it's a contradiction to the assumptions.
	
	If $m=pq$, for $i=0,1$, let $\mu_i=\frac{1}{q}\sum_{j=0}^{q-1}\nu\circ f^{-jp-i}$. Then $\mu_1,\mu_2\in\mathcal{M}_{f^p}(X)$ and by Lemma \ref{Lemma 2.1}, we have that $\mu_1\neq\mu_2$. Hence, $(X,\mu,f^p)$ is not uniquely ergodic.
\end{proof}
\begin{Prop}\label{Proposition 2.13}
	Suppose that $(X,\mu,f)$ is uniquely ergodic and $(X,\mu,f^d)$ is not uniquely ergodic for some $d\geq2$, then there exists a prime $p$ dividing $d$ such that $(X,\mu,f^p)$ is not uniquely ergodic.
\end{Prop}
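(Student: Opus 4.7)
The plan is to argue by strong induction on $d \geq 2$, using Lemma \ref{Lemma 2.12} as the reduction step from arbitrary composites to the two-prime case. The base case is immediate: if $d$ itself is prime, simply take $p = d$.

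For the inductive step, I assume $d$ is composite and that the conclusion holds for every uniquely ergodic system whose analogous exponent is strictly less than $d$. I pick any prime divisor $p$ of $d$ and write $d = p k$ with $k \geq 2$. If $(X,\mu,f^p)$ already fails to be uniquely ergodic, we are done with this $p$. Otherwise $(X,\mu,f^p)$ is uniquely ergodic while $(X,\mu,(f^p)^k) = (X,\mu,f^d)$ is not, so the inductive hypothesis applied to the system $(X, f^p)$ with parameter $k < d$ yields a prime $q \mid k$ for which $(X,\mu,f^{pq}) = (X,\mu,(f^p)^q)$ is not uniquely ergodic. Lemma \ref{Lemma 2.12}, applied to the original $f$ with the two primes $p$ and $q$, then forces at least one of $(X,\mu,f^p)$ and $(X,\mu,f^q)$ to lose unique ergodicity; the former is excluded by the current case assumption, so $(X,\mu,f^q)$ is not uniquely ergodic, and $q \mid k \mid d$ is the desired prime divisor of $d$.

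The only care required is to phrase the induction so that the hypothesis is quantified over all uniquely ergodic base systems, allowing its re-invocation on the new uniquely ergodic system $(X, f^p)$ in the inductive step. Beyond that there is essentially no obstacle to overcome: the substantive content already sits in Lemma \ref{Lemma 2.12} (which in turn rests on the structural description in Lemma \ref{Lem 2.8}), and the present proposition is a short combinatorial bootstrap from two-prime failures of unique ergodicity to arbitrary composite failures.
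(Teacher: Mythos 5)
Your proof is correct, and it takes a slightly different inductive route from the paper's. The paper writes $d=p_1p_2\cdots p_k$, inducts on the number $k$ of prime factors (with multiplicity), keeps the base system $(X,f)$ fixed throughout, and at the inductive step sets $g=f^{p_1\cdots p_{m-1}}$ so that $f^d=g^{p_mp_{m+1}}$; Lemma~\ref{Lemma 2.12} is applied to $g$ with the last two primes $p_m,p_{m+1}$, and the inductive hypothesis is then invoked on powers $f^{p_1\cdots p_{m-1}p_m}$ or $f^{p_1\cdots p_{m-1}p_{m+1}}$ of the same $f$. You instead do strong induction on $d$ itself, peel off one prime $p$ at the front, and crucially apply the inductive hypothesis to the \emph{new} uniquely ergodic system $(X,f^p)$ with exponent $k=d/p$. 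That re-invocation on a changed base system is exactly the point you flag, and you handle it correctly by stating the induction over all uniquely ergodic systems. Both arguments bottom out in Lemma~\ref{Lemma 2.12}; yours is a bit more streamlined (it peels one prime rather than two and avoids naming the full prime factorization), while the paper's stays within the powers of a single fixed $f$. One small remark: in your Case~2 it could in principle happen that the prime $q$ produced by the inductive hypothesis equals $p$ (if $p^2\mid d$); then Lemma~\ref{Lemma 2.12} forces $(X,\mu,f^p)$ to be non-uniquely-ergodic, contradicting the case assumption, which simply shows that sub-case is vacuous — the argument remains sound, but it is worth noting explicitly so the reader doesn't pause there.
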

\begin{proof}
	Suppose that $d=p_1p_2\cdots p_k$, where $p_i$ are all primes, we proceed by induction on $k$. When $k=1$, the conclusion is clear. When $k=2$, the conclusion is from Lemma \ref{Lemma 2.12}. Now, given $m\geq2$, we suppose that the conclusion holds provided $k\leq m$. Then for $k=m+1$, denote $g=f^{p_1p_2\cdots p_{m-1}}$, then $f^{p_1p_2\cdots p_k}=f^{p_1p_2\cdots p_{m+1}}=g^{p_mp_{m+1}}$.
	
	If $(X,\mu,g)$ is not uniquely ergodic, then by the inductive hypothesis, there exists $1\leq i\leq m-1$ such that $(X,\mu,f^{p_i})$ is not uniquely ergodic.
	
	If $(X,\mu,g)$ is uniquely ergodic, from Lemma \ref{Lemma 2.12}, either $(X,\mu,g^{p_m})$ is not uniquely ergodic or $(X,\mu,g^{p_{m+1}})$ is not uniquely ergodic. Since $g^{p_m}=f^{p_1p_2\cdots p_{m-1}p_m}$ and $g^{p_{m+1}}=f^{p_1p_2\cdots p_{m-1}p_{m+1}}$, by the inductive hypothesis, there exists $1\leq i\leq m+1$ such that $(X,\mu,f^{p_i})$ is not uniquely ergodic.
	
	As a result, the conclusion holds provided $k\leq m+1$. By induction, we complete this proof.
	
\end{proof}
\begin{Lem}\label{Lemma 2.14}
	Suppose that $(X,\mu,f)$ is uniquely ergodic and $(X,\mu,f^p)$ is not uniquely ergodic for some prime $p$, then there exists $\nu\in\mathcal{M}_{f^p}^e(X)$ such that $\nu\circ f^{-i}\neq\nu\circ f^{-j}$ for any $0\leq i<j\leq p-1$, $\mathcal{M}_{f^p}^e(X)=\{\nu\circ f^{-i}: 0\leq i\leq p-1\}$ and $\mu=\frac{1}{p}\sum_{i=0}^{p-1}\nu\circ f^{-i}$.
\end{Lem}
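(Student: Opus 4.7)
The plan is to derive this lemma from Lemma \ref{Lem 2.8} (applied with $d=p$), whose conclusion already gives the desired enumeration and the averaging formula for $\mu$; the only new content is that the $p$ measures $\nu\circ f^{-i}$, $0\le i\le p-1$, are pairwise distinct. So the real work is a short contradiction argument that exploits the primality of $p$.

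First I would invoke Lemma \ref{Lem 2.8} to pick some $\nu\in\mathcal{M}_{f^p}^e(X)$ with $\mathcal{M}_{f^p}^e(X)=\{\nu\circ f^{-i}:0\le i\le p-1\}$ and $\mu=\frac{1}{p}\sum_{i=0}^{p-1}\nu\circ f^{-i}$. Suppose toward a contradiction that the measures $\nu\circ f^{-i}$ are not all distinct; then there exist $0\le i<j\le p-1$ with $\nu\circ f^{-i}=\nu\circ f^{-j}$, which, after composing with $f^i$, rewrites as $\nu=\nu\circ f^{-k}$ where $k:=j-i\in\{1,\dots,p-1\}$.

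Now I would use the primality of $p$. Since $\nu$ is $f^p$-invariant, $\nu=\nu\circ f^{-p}$, and by iteration $\nu=\nu\circ f^{-k}$ also gives $\nu=\nu\circ f^{-ak}$ for every $a\in\mathbb{Z}$ (using $f^p$-invariance to make sense of negative exponents on $k$). Because $1\le k\le p-1$ and $p$ is prime, $\gcd(k,p)=1$, so Bezout's identity produces integers $a,b$ with $ak+bp=1$. Then
\[
\nu=\nu\circ f^{-(ak+bp)}=\nu\circ f^{-1},
\]
so $\nu$ is $f$-invariant. By unique ergodicity of $(X,\mu,f)$ we conclude $\nu=\mu$, which forces $\nu\circ f^{-i}=\mu$ for every $i$ and hence $\mathcal{M}_{f^p}^e(X)=\{\mu\}$. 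This contradicts the hypothesis that $(X,\mu,f^p)$ is not uniquely ergodic, so the measures $\nu\circ f^{-i}$, $0\le i\le p-1$, must be pairwise distinct, completing the proof.

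I do not anticipate a real obstacle here: Lemma \ref{Lem 2.8} already does the heavy lifting, and the distinctness step is a one-line Bezout argument that crucially uses the primality of $p$ (for composite $d$ the analogous statement fails, as the proof of Lemma \ref{Lemma 2.12} already illustrates). The only care needed is to invoke $f^p$-invariance of $\nu$ when extending $\nu=\nu\circ f^{-k}$ to negative multiples of $k$, so that Bezout applies with integer, not merely nonnegative, coefficients.
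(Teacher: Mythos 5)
Your proof is correct and takes essentially the same route as the paper: invoke Lemma \ref{Lem 2.8} for the enumeration and the averaging formula, then use a short number-theoretic argument exploiting the primality of $p$ to get distinctness. The paper phrases the last step via the quantity $m=\min\{1\leq i\leq p:\nu\circ f^{-i}=\nu\}$ and the fact that $m$ divides $p$, which is equivalent to your Bezout argument.
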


\begin{proof}
    Since $(X,\mu,f^p)$ is not uniquely ergodic, by Lemma \ref{Lem 2.8}, there exists $\nu\in\mathcal{M}_{f^p}^e(X)$, $\mathcal{M}_{f^p}^e(X)=\{\nu\circ f^{-i}: 0\leq i\leq p-1\}$ and $\mu=\frac{1}{p}\sum_{i=0}^{p-1}\nu\circ f^{-i}$. Let $m=\min\{1\leq i\leq p:\nu\circ f^{-i}=\nu\}$, then $m\geq 2$ and $\nu\circ f^{-i}\neq\nu\circ f^{-j}$ for any $0\leq i<j\leq m-1$. Since $\nu\circ f^{-p}=\nu$, we have that $m$ divides $p$, hence, $m=p$.
\end{proof}

\section{Proof of Theorem \ref{Theorem B}}\label{section 3}
Let $\mathbb{T}^1:=\{z\in\mathbb{C}:|z|=1\}$ denote the circle. Given an irrational number $\alpha\in(0,1)$, the \emph{irrational rotation} $R_\alpha$ on $\mathbb{T}^1$ is defined by $R_\alpha(z)=e^{2\pi i\alpha}z$.

The proof of Theorem \ref{Theorem B} will based on Lenz's result for irrational rotations on the circle. 
\begin{Lem}\cite[Remark 3]{Lenz-2004}\label{Lemm3.1}
	There exists a non-uniform $A\in C(\mathbb{T}^1,\mathrm{GL}(2))$ for every irrational rotation on $\mathbb{T}^1$.
\end{Lem}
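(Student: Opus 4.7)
The plan is to construct an explicit non-uniform $A\in C(\mathbb{T}^1,\mathrm{GL}(2))$ over the given irrational rotation $R_\alpha$ by using the Schrödinger transfer cocycle at an energy inside the spectrum of the almost-Mathieu operator. Fix $\lambda>2$ and consider
$$
A_{\lambda,E}(x)=\begin{pmatrix}\lambda\cos 2\pi x-E & -1\\ 1 & 0\end{pmatrix}\in C(\mathbb{T}^1,\mathrm{SL}(2,\mathbb{R}))\subset C(\mathbb{T}^1,\mathrm{GL}(2)),
$$
which is the transfer cocycle of the almost-Mathieu operator $H_\lambda$ at energy $E$, with $\mu$ the Lebesgue measure on $\mathbb{T}^1$. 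I would choose $E$ in the non-empty spectrum $\sigma(H_\lambda)\subset\mathbb{R}$ and show that $A_{\lambda,E}$ is non-uniform.

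The first ingredient is a positive lower bound on the Lyapunov exponent. By Herman's subharmonicity argument, applied to the complex-analytic extension $z\mapsto A_{\lambda,E}(z)$ on a strip around $\mathbb{T}^1$, one obtains $\Lambda_\mu(A_{\lambda,E})\geq\log(\lambda/2)>0$ for every $\lambda>2$ and every $E\in\mathbb{R}$, uniformly in $\alpha$. The second ingredient is the obstruction to uniformity: by Furman's theorem \cite{Furman-1997}, a uniform $A\in C(\mathbb{T}^1,\mathrm{SL}(2,\mathbb{R}))$ with $\Lambda_\mu(A)>0$ over the minimal uniquely ergodic system $(\mathbb{T}^1,R_\alpha)$ must be uniformly hyperbolic, i.e.\ admit a continuous $R_\alpha$-invariant splitting into stable and unstable line bundles. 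However, by Johnson's dynamical characterization of the spectrum, uniform hyperbolicity of the transfer cocycle at energy $E$ is equivalent to $E$ lying in the resolvent set of $H_\lambda$; since $E\in\sigma(H_\lambda)$, no such continuous splitting can exist, contradicting uniformity of $A_{\lambda,E}$.

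The step I expect to be hardest is the appeal to Johnson's dynamical characterization of the spectrum, which invokes operator-theoretic machinery heavier than strictly necessary for the lemma. A more self-contained alternative---and the one I suspect underlies Lenz's argument---is to produce the non-uniform witness directly via the continued-fraction convergents $p_n/q_n\to\alpha$: over the periodic rotation $R_{p_n/q_n}$, Floquet analysis of the resulting periodic Schrödinger cocycle shows that in every spectral band there exists $x_n\in\mathbb{T}^1$ at which the monodromy $A_{\lambda,E}(q_n,x_n)$ is elliptic and hence bounded; a compactness/diagonal extraction then yields $x_0\in\mathbb{T}^1$ and a subsequence $n_k$ along which $\tfrac{1}{n_k}\log\|A_{\lambda,E}(n_k,x_0)\|\to 0$, which is strictly smaller than $\Lambda_\mu(A_{\lambda,E})\geq\log(\lambda/2)$, directly establishing non-uniformity. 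The main technical difficulty in this elementary route is controlling the perturbation from $R_{p_n/q_n}$ to $R_\alpha$ uniformly for Liouville $\alpha$, where the approximation rate is arbitrarily fast and the Floquet structure must be tracked along the entire subsequence $(q_n)$.
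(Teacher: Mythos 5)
The paper does not give its own proof of this lemma; it is imported verbatim from Lenz \cite[Remark 3]{Lenz-2004}. Your argument is nevertheless a correct and complete proof, and it is worth comparing to what Lenz actually does. Your route packages together three deep black boxes: Herman's subharmonicity bound $\Lambda_\mu(A_{\lambda,E})\geq\log(\lambda/2)$, Furman's dichotomy (a uniform $\mathrm{SL}(2,\mathbb{R})$-cocycle with positive exponent over a strictly ergodic base is uniformly hyperbolic), and Johnson's theorem (for a minimal base, uniform hyperbolicity of the Schr\"odinger transfer cocycle at $E$ is equivalent to $E$ lying in the resolvent set). Since $H_\lambda$ is a bounded self-adjoint operator its spectrum is nonempty, so any spectral $E$ furnishes a non-uniform cocycle. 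The chain of implications is sound, and notably all three ingredients are uniform in $\alpha$, so no Diophantine assumption is needed. Lenz's actual argument in \cite{Lenz-2004} is structurally different: his main theorem converts the existence of a \emph{non-uniform subadditive sequence} into the existence of a non-uniform $\mathrm{SL}(2,\mathbb{R})$-cocycle on the same base, and Remark 3 then applies this to irrational rotations using the Derriennic--Krengel subadditive example recalled in the introduction. That route is self-contained within the ergodic-theoretic framework and does not invoke Schr\"odinger operator spectral theory; yours trades that self-containment for a shorter chain of citations. Both are valid; yours imports more, Lenz's builds more.

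One small caution on your ``elementary'' alternative via continued fractions and Floquet bands: as you yourself flag, the perturbation from $R_{p_n/q_n}$ to $R_\alpha$ is not controlled by $1/q_n^2$ alone for Liouville $\alpha$, and you must also propagate the elliptic monodromy information along a \emph{subsequence} of $q_n$'s compatible with the diagonal extraction; this is where such direct arguments have historically failed to cover all irrational $\alpha$. It is not a gap in your submitted proof (which rests entirely on the Herman--Furman--Johnson route), but if you intend the alternative to stand alone you would need to supply these estimates, which is essentially the content that makes Lenz's result nontrivial beyond Herman's original Diophantine case.
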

Now, we give the proof to Theorem \ref{Theorem B}.
\subsection{Proof of Theorem \ref{Theorem B}} By Lemma \ref{Lem2.4}, $(X_{eq},f_{eq})$ is non-trivial. Since $(X,f)$ is transitive, we have that $(X_{eq},f_{eq})$ is transitive, by Lemma \ref{Lem2.3}, $(X_{eq},f_{eq})$ has topological discrete spectrum. Since $(X_{eq},f_{eq})$ is non-trivial, there exists a non-constant eigenfunction $\varphi$ for $(X_{eq},f_{eq})$ and an eigenvalue $\lambda$ corresponding to $\varphi$, by Lemma \ref{Lem2.2}, $|\varphi|$ is constant and $|\lambda|=1$. If $\lambda^{k_0}=1$ for some $k_0\in\mathbb{N}^+$, then $\varphi(f^{k_0}x)=\varphi(x)$ for any $x\in X_{eq}$. Since $(X_{eq},f^{k_0}_{eq})$ is a factor of $(X,f^{k_0})$ and $(X,f^{k_0})$ is topological transitive, we have that $(X_{eq},f_{eq}^{k_0})$ is topological transitive, combining with the fact that $\varphi$ is continuous, we have that $\varphi$ is constant, it's a contradiction. As a result, $\lambda^k\neq 1$ for any $k\in\mathbb{N}^+$. Hence, there exists an irrational number $\alpha\in(0,1)$ with $\lambda=e^{2\pi i\alpha}$. Define $$\pi:X_{eq}\to\mathbb{T}^1\text{ by } \pi=\frac{\varphi}{|\varphi|},$$ then $\pi$ is a continuous surjection and $\pi\circ f_{eq}=R_\alpha\circ\pi$. Therefore, $(\mathbb{T}^1,R_\alpha)$ is a factor of $(X_{eq},f_{eq})$ and thus is a factor of $(X,f)$. By Lemma \ref{Lemm3.1}, there exists a non-uniform $B_1\in C(\mathbb{T}^1,\mathrm{GL}(2))$ for $(\mathbb{T}^1,R_\alpha)$. By Proposition \ref{Proposition 2.1}, there exists a non-uniform $B_2\in C(X,\mathrm{GL}(2))$ for $(X,f)$. By Proposition \ref{Proposition 2.7}, for any $m\geq2$, there exists a non-uniform $A\in C(X,\mathrm{GL}(m))$.\qed

From the proof, we have the following proposition.
\begin{Prop}
	Suppose that $(X,f)$ is an extension of an irrational rotation on $\mathbb{T}^1$, then for any $m\geq2$, there exists a non-uniform $A\in C(X,\mathrm{GL}(m))$.
\end{Prop}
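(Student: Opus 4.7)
The plan is to follow the two-step template used in the proof of Theorem \ref{Theorem B}, but to carry out the dimension promotion on the base $\mathbb{T}^1$ \emph{before} lifting, rather than after. Let $R_\alpha$ be the irrational rotation of which $(X,f)$ is an extension, and let $\pi:X\to\mathbb{T}^1$ denote the factor map, which exists by hypothesis.

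First, I would invoke Lemma \ref{Lemm3.1} to produce a non-uniform cocycle $A_0\in C(\mathbb{T}^1,\mathrm{GL}(2))$. Because the irrational rotation $(\mathbb{T}^1,\mathrm{Leb},R_\alpha)$ is uniquely ergodic, Proposition \ref{Proposition 2.7} applies on the base and yields, for every $m\geq 2$, a non-uniform cocycle $A_m\in C(\mathbb{T}^1,\mathrm{GL}(m))$ (with the convention $A_2:=A_0$ when $m=2$). Next, I would pull back through the factor map: Proposition \ref{Proposition 2.1} shows that $B_m:=A_m\circ\pi\in C(X,\mathrm{GL}(m))$ is non-uniform, which gives the desired conclusion.

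The one point that requires care, and the reason this is not merely a verbatim repetition of the proof of Theorem \ref{Theorem B}, is that $(X,f)$ is not assumed here to be uniquely ergodic, so Proposition \ref{Proposition 2.7} cannot be applied on $X$ directly (its proof uses unique ergodicity to make the Birkhoff average of $\log|\det A|$ converge uniformly, which is what transfers non-uniformity from $A$ to the determinant-normalised $\tilde A$). Performing the dimension-promotion step on $\mathbb{T}^1$ first sidesteps this issue, since the base is automatically uniquely ergodic, and the subsequent pullback through $\pi$ via Proposition \ref{Proposition 2.1} needs nothing beyond surjectivity of the factor map.
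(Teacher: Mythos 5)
Your proof is correct, and it in fact patches a small gap in the paper's claim that the proposition follows directly ``from the proof'' of Theorem~\ref{Theorem B}. In that proof, the dimension promotion step (Proposition~\ref{Proposition 2.7}) is applied on $X$ itself, which is legitimate there because Theorem~\ref{Theorem B} assumes $(X,f)$ is uniquely ergodic; but the present proposition imposes no unique ergodicity hypothesis on $(X,f)$, so one cannot simply pull back Lenz's $2\times 2$ cocycle to $X$ and then invoke Proposition~\ref{Proposition 2.7} to raise the dimension. Your reordering---promote the dimension on the base $(\mathbb{T}^1,R_\alpha)$, where unique ergodicity is automatic, and only then pull back through the factor map via Proposition~\ref{Proposition 2.1}, which needs nothing beyond surjectivity---yields the statement exactly as written. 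You have correctly identified why the order of the two steps matters and why the determinant normalisation inside Proposition~\ref{Proposition 2.7} is the culprit. The only trivial remark worth adding is that for the $m=2$ case Proposition~\ref{Proposition 2.7} need not be invoked at all, since Lemma~\ref{Lemm3.1} already supplies the cocycle directly; your convention $A_2:=A_0$ handles this cleanly.
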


When $X$ is $\mathbb{T}^1$, we can get more information. The following is a classic result for homeomorphisms on $\mathbb{T}^1$.
\begin{Lem}\cite[Theorem 6.18]{Walters-1982}
	Suppose that $f$ is a homeomorphism on $\mathbb{T}^1$ without periodic points, then there exists an irrational rotation $(\mathbb{T}^1,R_\alpha)$ such that $(\mathbb{T}^1,f)$ is an extension of $(\mathbb{T}^1,R_\alpha)$.
\end{Lem}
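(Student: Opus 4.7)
The plan is to invoke the classical Poincar\'e rotation number construction. First I would check that $f$ must be orientation-preserving, since any orientation-reversing self-homeomorphism of $\mathbb{T}^1$ has exactly two fixed points (its lift to $\mathbb{R}$, composed with a reflection, has fixed points by the intermediate value theorem), which would contradict the hypothesis that $f$ has no periodic points.

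Next I would lift $f$ to a homeomorphism $F:\mathbb{R}\to\mathbb{R}$ commuting with the integer translation, i.e.\ $F(x+1)=F(x)+1$, and define the rotation number
\[
\alpha:=\lim_{n\to\infty}\frac{F^n(x)-x}{n}\pmod 1.
\]
The standard subadditive-type argument shows this limit exists, is independent of $x\in\mathbb{R}$, and depends on the choice of lift only modulo $\mathbb{Z}$. Then I would show $\alpha$ is irrational: if $\alpha=p/q$ in lowest terms, then $F^q(x)-x-p$ has a zero (otherwise it would be of constant sign, forcing the rotation number away from $p/q$), which translates to a periodic point of $f$, contradicting the hypothesis.

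Now comes the construction of the factor map. Fix $x_0\in\mathbb{R}$ and consider the countable orbit set $\Omega:=\{F^n(x_0)+m:n,m\in\mathbb{Z}\}$. Define $h:\Omega\to\mathbb{R}$ by $h(F^n(x_0)+m):=n\alpha+m$. The key classical fact is that $h$ is order-preserving, i.e.\ $F^{n_1}(x_0)+m_1<F^{n_2}(x_0)+m_2$ if and only if $n_1\alpha+m_1<n_2\alpha+m_2$; this follows from the irrationality of $\alpha$ together with the monotonicity of $F$ and a comparison argument on iterates. Because $h$ is monotone with dense image in $\mathbb{R}$, it extends uniquely to a continuous, non-decreasing, surjective map $H:\mathbb{R}\to\mathbb{R}$ satisfying $H\circ F=R_{\alpha}\circ H+\text{integer translation shift}$ and $H(x+1)=H(x)+1$. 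Passing to the quotient $\mathbb{R}/\mathbb{Z}=\mathbb{T}^1$, $H$ descends to a continuous surjection $\pi:\mathbb{T}^1\to\mathbb{T}^1$ with $\pi\circ f=R_\alpha\circ \pi$, exhibiting $(\mathbb{T}^1,f)$ as an extension of $(\mathbb{T}^1,R_\alpha)$.

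The main obstacle is the order-preserving property of $h$ on $\Omega$: one must show that the combinatorial order type of an $F$-orbit agrees with that of the corresponding $R_\alpha$-orbit. This is the heart of Poincar\'e's theorem and requires carefully comparing the iterates using the translation equivariance $F(x+1)=F(x)+1$ and the monotonicity of $F$; once that is in hand, the extension of $h$ to a continuous surjection on $\mathbb{R}$ and the descent to $\mathbb{T}^1$ are routine, and the semi-conjugacy equation $\pi\circ f=R_\alpha\circ\pi$ follows from the definition of $h$ on $\Omega$.
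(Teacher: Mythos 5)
Your proposal is correct and is exactly the classical Poincar\'e rotation-number argument that underlies Walters's Theorem~6.18, which the paper cites without reproducing a proof. The one place to be slightly more careful in the write-up is the extension step: the orbit set $\Omega$ itself need \emph{not} be dense in $\mathbb{R}$ (Denjoy examples), and the extension should be defined by $H(x)=\sup\{h(y):y\in\Omega,\ y\le x\}$; continuity then follows precisely because the \emph{image} $\{n\alpha+m\}$ is dense --- a gap in $\overline{\Omega}$ forces $H$ to be constant across it rather than to jump --- which is the point you correctly emphasize, so the argument goes through.
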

Combining Proposition \ref{Proposition 2.1} and Lemma \ref{Lemm3.1}, we can give an affirmative answer to Walters's question when $X=\mathbb{T}^1$. 
\begin{Prop}\label{Prop 2.5}
	Suppose that $f$ is a homeomorphism on $\mathbb{T}^1$, $(\mathbb{T}^1,\mu,f)$ is uniquely ergodic and $\mu$ is non-atomic, then for any $m\geq2$, there exists a non-uniform $A\in C(\mathbb{T}^1,\mathrm{GL}(m))$.
\end{Prop}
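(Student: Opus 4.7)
The plan is to reduce the statement to the three results the paper has just cited, namely Lenz's Lemma \ref{Lemm3.1}, the lift-of-extension Proposition \ref{Proposition 2.1}, and the dimension-bump Proposition \ref{Proposition 2.7}, together with the classical theorem on circle homeomorphisms without periodic points. The linchpin is to check that the hypotheses ``uniquely ergodic with non-atomic $\mu$'' force $f$ to have no periodic point, so that the classical semiconjugacy to an irrational rotation applies.

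First, I would show that $f$ has no periodic point. If there were $x_0 \in \mathbb{T}^1$ with $f^k x_0 = x_0$ for some $k \in \mathbb{N}^+$, then $\tfrac{1}{k}\sum_{i=0}^{k-1}\delta_{f^i x_0}$ would be an $f$-invariant Borel probability measure, so by unique ergodicity it would coincide with $\mu$. But this measure is supported on a finite set, contradicting the non-atomicity of $\mu$. Hence $f$ has no periodic point, and we may apply the quoted result (Walters, \cite[Theorem 6.18]{Walters-1982}) to obtain an irrational $\alpha \in (0,1)$ and a factor map $\pi : (\mathbb{T}^1,f) \to (\mathbb{T}^1, R_\alpha)$.

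Next, I would appeal to Lemma \ref{Lemm3.1}: there exists a non-uniform $A \in C(\mathbb{T}^1,\mathrm{GL}(2))$ for $(\mathbb{T}^1, R_\alpha)$. Combining this with Proposition \ref{Proposition 2.1} (the extension of a system carrying a non-uniform cocycle still carries one), the pullback $B = A \circ \pi \in C(\mathbb{T}^1,\mathrm{GL}(2))$ is a non-uniform cocycle over $(\mathbb{T}^1,f)$. Finally, to handle arbitrary $m \geq 2$, I would invoke Proposition \ref{Proposition 2.7}: since $(\mathbb{T}^1,\mu,f)$ is uniquely ergodic and admits a non-uniform cocycle in dimension $2$, it admits one in every dimension $m \geq 2$ via the block-diagonal construction $\mathrm{diag}(\tilde{B}, I_{m-2})$ after normalizing the determinant.

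The only non-routine step is the reduction to an irrational rotation, and within it the one content-bearing observation is the exclusion of periodic orbits from non-atomicity. Everything else is a direct assembly of results already available in the excerpt, so I do not anticipate a substantive obstacle beyond recording these invocations carefully.
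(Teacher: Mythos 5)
Your proposal is correct and follows exactly the paper's intended line of argument: reduce to an irrational rotation via Walters's Theorem 6.18, pull back Lenz's non-uniform cocycle through the factor map using Proposition \ref{Proposition 2.1}, and bump the dimension with Proposition \ref{Proposition 2.7}. The only thing you add beyond what the paper leaves implicit is the explicit (and correct) observation that non-atomicity of the unique invariant measure rules out periodic points, which is precisely the step needed to invoke the classical semiconjugacy.
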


\section{Proofs of Theorem \ref{Theorem C} and Theorem \ref{Theorem D} }\label{section 4}
\begin{Lem}\label{Lemma 2.8}
	Suppose that $(X,f)$ is a dynamical system with $2\leq|\mathcal{M}_f^e(X)|<\infty$, then for any $\nu\in\mathcal{M}_f^e(X)$, there exists $\varphi\in C(X,\mathbb{R})$ such that $\int\varphi\mathrm{d}\nu>\int\varphi\mathrm{d}\mu$ for any $\mu\in\mathcal{M}_f(X)$ with $\mu\neq\nu$.
\end{Lem}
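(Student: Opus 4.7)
The plan is to realize $\nu$ as the unique maximizer of an affine functional on the finite-dimensional simplex $\mathcal{M}_f(X)$. Enumerate $\mathcal{M}_f^e(X) = \{\nu_1, \nu_2, \ldots, \nu_k\}$ with $\nu_1 = \nu$ and $k \geq 2$. Because $\mathcal{M}_f^e(X)$ is finite, the ergodic decomposition writes every $\mu \in \mathcal{M}_f(X)$ as
$$\mu = \sum_{i=1}^{k} p_i \nu_i, \qquad p_i \geq 0, \quad \sum_{i=1}^{k} p_i = 1,$$
and the coefficients $p_i$ are uniquely determined by $\mu$: distinct ergodic measures are mutually singular, as is visible from the disjoint Borel sets $G_{f,\nu_i}$ satisfying $\nu_i(G_{f,\nu_j}) = \delta_{ij}$ noted just before Lemma \ref{Lemma 2.1}.

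The core step is to produce a single $\varphi \in C(X,\mathbb{R})$ with $\int \varphi \, d\nu_1 > \int \varphi \, d\nu_i$ for every $i \in \{2,\ldots,k\}$. I would establish this by showing that the evaluation map
$$T \colon C(X,\mathbb{R}) \longrightarrow \mathbb{R}^k, \qquad T(\psi) = \Bigl( \int \psi \, d\nu_1,\ \ldots,\ \int \psi \, d\nu_k \Bigr),$$
is surjective, after which taking any $\varphi \in T^{-1}((1,0,\ldots,0))$ finishes this step. If $T$ were not surjective, some nonzero $(c_1,\ldots,c_k) \in \mathbb{R}^k$ would annihilate its image, meaning the signed measure $\sum_{i=1}^{k} c_i \nu_i$ pairs to zero against every continuous function; the Riesz representation theorem then forces $\sum_i c_i \nu_i = 0$, and evaluating this identity on each $G_{f,\nu_j}$ yields $c_j = 0$ for all $j$, a contradiction.

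Finally I would upgrade the strict inequalities from the vertices $\nu_2,\ldots,\nu_k$ to all of $\mathcal{M}_f(X) \setminus \{\nu_1\}$ by an affine combination argument. For any $\mu = \sum_i p_i \nu_i \neq \nu_1$ one has $p_1 < 1$, hence
$$\int \varphi \, d\nu_1 - \int \varphi \, d\mu = \sum_{i=2}^{k} p_i \Bigl( \int \varphi \, d\nu_1 - \int \varphi \, d\nu_i \Bigr) > 0,$$
since every summand is nonnegative and at least one (any $i \geq 2$ with $p_i > 0$) is strictly positive, yielding the desired $\varphi$.

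I do not foresee a serious obstacle. The only step requiring a brief argument is the surjectivity of $T$, which is ultimately the linear-algebraic reflection of the fact that distinct ergodic measures are affinely independent in $\mathcal{M}(X)$, a direct consequence of their mutual singularity through the generic sets $G_{f,\nu_i}$.
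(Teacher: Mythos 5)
Your proof is correct, and it takes a genuinely different route to the crucial step of producing the separating function $\varphi$. The paper constructs $\varphi$ explicitly: it uses mutual singularity and regularity to find a closed set $E \subset G_{f,\nu}$ with $\nu(E) \geq 4/5$ and a closed set $F$ disjoint from $G_{f,\nu}$ with $\mu_i(F) \geq 4/5$ for all other ergodic $\mu_i$, and then takes the Urysohn-type function $\varphi(x) = d(x,F)/(d(x,F)+d(x,E))$; the required inequalities $\int\varphi\,d\nu \geq 4/5 > 1/5 \geq \int\varphi\,d\mu_i$ then follow from the measure bounds. You instead argue abstractly that the evaluation map $T \colon C(X,\mathbb{R}) \to \mathbb{R}^k$, $\psi \mapsto (\int\psi\,d\nu_i)_i$, is surjective: its image is a linear subspace (automatically closed since $\mathbb{R}^k$ is finite-dimensional), and a nonzero annihilator $(c_i)$ would give $\sum_i c_i\nu_i = 0$ as a signed measure by Riesz uniqueness, contradicted by evaluating on $G_{f,\nu_j}$. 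Both arguments hinge on the same underlying facts --- mutual singularity of ergodic measures via the generic sets and the finiteness of $\mathcal{M}_f^e(X)$ --- and both finish by the identical affine-combination step on $\mathcal{M}_f(X) = \mathrm{conv}(\mathcal{M}_f^e(X))$. The paper's construction is more elementary and self-contained (only regularity of Borel measures on a metric space), while yours is shorter and exposes the linear-algebraic heart of the matter (affine independence of the ergodic measures); yours also immediately yields the stronger conclusion that the vector $(\int\varphi\,d\nu_i)_i$ can be prescribed arbitrarily, not merely that $\nu$ can be made a strict maximizer.
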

\begin{proof}
	Suppose that $\mathcal{M}_f^e(X)=\{\nu,\mu_1,\mu_2,\cdots,\mu_{n-1}\}$, then for any $1\leq i\leq n-1$, we have that $\mu_i(G_{f,\nu})=0$, by regularity, there exists an open subset $U_i\supset G_{f,\nu}$ with $\mu_i(U_i)\leq\frac{1}{5}$. Denote $F=X\setminus\bigcap_{i=1}^{n-1}U_i$, then $F$ is closed and $\mu_i(F)\geq\frac{4}{5}$ for any $1\leq i\leq n-1$. Since $\nu(G_{f,\nu})=1$, by regularity, we can find a closed subset $E\subset G_{f,\nu}$ with $\nu(E)\geq\frac{4}{5}$. We define $\varphi\in C(X,\mathbb{R})$ by $$\varphi(x)=\frac{d(x,F)}{d(x,F)+d(x,E)},$$ then $\int\varphi\mathrm{d}\nu\geq\frac{4}{5}$ and $\int\varphi\mathrm{d}\mu_i\leq\frac{1}{5}$ for any $1\leq i\leq n-1$. Given $\mu\in\mathcal{M}_f(X)$ with $\mu\neq\nu$, then there exist $t_0\in [0,1)$ and $t_1,\cdots,t_{n-1}\in [0,1]$ with $\sum_{i=0}^{n-1}t_i=1$ and $\mu=t_0\nu+\sum_{i=1}^{n-1}t_i\mu_i$. As a result, $\int\varphi\mathrm{d}\nu>\int\varphi\mathrm{d}\mu$.
\end{proof}

Now, we give the proof to Theorem \ref{Theorem C}.
\subsection{Proof of Theorem \ref{Theorem C}} 
Since $Q_{f^p}^e(X)\neq X$, $(X,\mu,f^p)$ is not uniquely ergodic. By Lemma \ref{Lemma 2.14}, there exists $\nu\in\mathcal{M}_{f^p}^e(X)$ such that $\nu\circ f^{-i}\neq\nu\circ f^{-j}$ for any $0\leq i<j\leq p-1$, $\mathcal{M}_{f^p}^e(X)=\{\nu\circ f^{-i}: 0\leq i\leq p-1\}$ and $\mu=\frac{1}{p}\sum_{i=0}^{p-1}\nu\circ f^{-i}$. For $(X,f^p)$, by Lemma \ref{Lemma 2.8}, there exists $\varphi\in C(X,\mathbb{R})$ such that $\int\varphi\mathrm{d}\nu>\int \varphi\mathrm{d}\mu$ for any $\mu\in\mathcal{M}_{f^p}(X)$ with $\mu\neq\nu$. We define $A\in C(X,\mathrm{GL}(p))$ by 
\[A(x)=\begin{pmatrix}
	0 & I_{p-1}\\
	e^{\varphi(x)} & 0\\
\end{pmatrix}.
\]
Then $$A(p,x)=\mathrm{diag}\left\{e^{\varphi(x)}, e^{\varphi(fx)},\cdots, e^{\varphi(f^{p-1}x)}\right\}.$$ 
For any $n\in\mathbb{N}^+$, 
\begin{align*}
	A(np,x)&=A(p,f^{(n-1)p}x)A(p,f^{(n-2)p}x)\cdots A(p,x)\\
	&=\mathrm{diag}\left\{e^{\sum_{i=0}^{n-1}\varphi(f^{ip}x)}, e^{\sum_{i=0}^{n-1}\varphi(f^{ip+1}x)},\cdots, e^{\sum_{i=0}^{n-1}\varphi(f^{ip+p-1}x)}\right\}.
\end{align*}

Now, suppose that $A$ is uniform. 

Let $\|\cdot\|_\infty$ denote the $\infty$-norm, then $$\frac{1}{np}\log\|A(np,x)\|_\infty=\frac{1}{p}\max\left\{\frac{1}{n}\sum_{i=0}^{n-1}\varphi(f^{ip}x),\frac{1}{n}\sum_{i=0}^{n-1}\varphi(f^{ip+1}x),\cdots, \frac{1}{n}\sum_{i=0}^{n-1}\varphi(f^{ip+p-1}x)\right\}.$$ Since $(X,\mu,f)$ is uniquely ergodic and $\frac{1}{n}\log\|A(n,x)\|_\infty$ converges uniformly on $X$, by Lemma \ref{Lemma 2.2}, $\frac{1}{n}\log\|A(n,x)\|_\infty$ converges uniformly to a constant on $X$. In particular, $\frac{1}{np}\log\|A(np,x)\|_\infty$ converges uniformly to a constant on $X$. Choose $x_0\in G_{f^p,\nu}$, then $f^ix_0\in G_{f^p,\nu\circ f^{-i}}$ for any $1\leq i\leq p-1$. Hence, $$\lim_{n\to\infty}\frac{1}{np}\log\|A(np,x_0)\|_\infty=\frac{1}{p}\max\left\{\int\varphi\mathrm{d}\nu,\int\varphi\mathrm{d}(\nu\circ f^{-1}),\cdots,\int\varphi\mathrm{d}(\nu\circ f^{-(p-1)})\right\}=\frac{1}{p}\int\varphi\mathrm{d}\nu.$$
As a result, $\frac{1}{np}\log\|A(np,x)\|_\infty$ converges uniformly to $\frac{1}{p}\int\varphi\mathrm{d}\nu$ on $X$.

Given $x\in X$ and $\mu\in V_{f^p}(x)$, there exist $n_1<n_2<\cdots$ such that $\mu=\lim_{j\to\infty}\frac{1}{n_j}\sum_{i=0}^{n_j-1}\delta_{f^{ip}x}$. Hence, for any $1\leq k\leq p-1$, we have that $\mu\circ f^{-k}=\lim_{j\to\infty}\frac{1}{n_j}\sum_{i=0}^{n_j-1}\delta_{f^{ip+k}x}$. As a result, $$\frac{1}{p}\int\varphi\mathrm{d}\nu=\lim_{j\to\infty}\frac{1}{n_jp}\log\|A(n_jp,x)\|_\infty=\frac{1}{p}\max\left\{\int\varphi\mathrm{d}\mu,\int\varphi\mathrm{d}(\mu\circ f^{-1}),\cdots,\int\varphi\mathrm{d}(\mu\circ f^{-(p-1)})\right\}.$$ 
Hence, $\nu\in\left\{\mu,\mu\circ f^{-1},\cdots,\mu\circ f^{-(p-1)}\right\}$ and we have that $\mu\in\mathcal{M}_{f^p}^e(X)$. Recall that $V_{f^p}(x)$ is a non-empty compact connected subset of $\mathcal{M}_{f^p}(X)$, we have that $|V_{f^p}(x)|=1$. As a result, $x\in Q_{f^p}^e(X)$. Therefore, $Q_{f^p}^e(X)=X$, it's a contradiction to that $Q_{f^p}^e(X)\neq X$. As a result, $A$ is non-uniform. By Proposition \ref{Proposition 2.7}, for any $m\geq p$, there exists a non-uniform $A\in C(X,\mathrm{GL}(m))$.\qed

Given $\mu\in\mathcal{M}_f(X)$, denote $$G^{f,\mu}:=\{x\in X:V_f(x)\supset\{\mu\}\},$$ then $G_{f,\mu}\subset G^{f,\mu}$. Let $\rho$ be a metrization of the weak* topology of $\mathcal{M}(X)$. Then $$G^{f,\mu}=\bigcap_{k=1}^{\infty}\bigcup_{n\geq k}\left\{x\in X:\rho(\mathcal{E}_{f,n}(x),\mu)<\frac{1}{k}\right\}.$$ Hence, $G^{f,\mu}$ is a $G_\delta$ subset of $X$.
\begin{Lem}\label{Lemma 3.1}
	Suppose that $(X,\mu,f)$ is uniquely ergodic, $(X,\mu,f^p)$ is not uniquely ergodic for some prime $p$ and $(X,f^p)$ is topological transitive, then $Q^e_{f^p}(X)\neq X$.
\end{Lem}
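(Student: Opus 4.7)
The plan is to argue by contradiction: assume $Q_{f^p}^e(X)=X$ and derive a contradiction via a Baire-category argument applied to a Birkhoff-type limit.

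First I would invoke Lemma \ref{Lemma 2.14} to produce $\nu\in\mathcal{M}_{f^p}^e(X)$ with $\mathcal{M}_{f^p}^e(X)=\{\nu_i:=\nu\circ f^{-i}:0\le i\le p-1\}$ pairwise distinct and $\mu=\frac{1}{p}\sum_{i=0}^{p-1}\nu_i$. Since distinct ergodic measures satisfy $\nu_i(G_{f^p,\nu_j})=0$ for $i\neq j$, the generic sets $G_{f^p,\nu_i}$ are pairwise disjoint, and the contradiction hypothesis then gives the Borel partition
\[
X=\bigsqcup_{i=0}^{p-1}G_{f^p,\nu_i}.
\]
A short weak-$*$ computation shows that for $x\in G_{f^p,\nu_i}$ one has $\mathcal{E}_{f^p,n}(fx)\to\nu_i\circ f^{-1}=\nu_{i+1}$, so $f(G_{f^p,\nu_i})=G_{f^p,\nu_{i+1}}$ with indices taken modulo $p$. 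Thus the homeomorphism $f$ cyclically permutes the $p$ atoms of the partition.

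Next I would use topological transitivity of $(X,f^p)$. By a standard Birkhoff-type argument using a countable base for $X$, $(X,f^p)$ admits at least one transitive point $x_0$. This $x_0$ lies in some $G_{f^p,\nu_{i_0}}$, which is $f^p$-invariant, so $G_{f^p,\nu_{i_0}}$ contains a dense $f^p$-orbit and is therefore dense in $X$. Applying the cyclic $f$-permutation from the previous paragraph yields that every $G_{f^p,\nu_i}$, $0\le i\le p-1$, is dense in $X$.

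Now choose $\varphi\in C(X,\mathbb{R})$ with $c_0:=\int\varphi\,\mathrm{d}\nu_0\neq c_1:=\int\varphi\,\mathrm{d}\nu_1$ (possible because $\nu_0\neq\nu_1$), set $c_i:=\int\varphi\,\mathrm{d}\nu_i$, and define
\[
g_n(x):=\frac{1}{n}\sum_{j=0}^{n-1}\varphi(f^{jp}x),\qquad \varphi^*(x):=c_i \text{ for }x\in G_{f^p,\nu_i}.
\]
Each $g_n$ is continuous on $X$, and the very definition of $G_{f^p,\nu_i}$ gives $g_n\to\varphi^*$ pointwise. Hence $\varphi^*$ is of Baire class $1$ on the compact metric (Baire) space $X$, so by the classical theorem of Baire on pointwise limits of continuous functions, its set of continuity points is a dense $G_\delta$ subset of $X$. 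On the other hand, the density of each $G_{f^p,\nu_i}$ forces $\varphi^*$ to attain every value $c_0,\ldots,c_{p-1}$ in every non-empty open subset of $X$, so the oscillation of $\varphi^*$ at every point of $X$ is at least $|c_0-c_1|>0$. Consequently $\varphi^*$ has no continuity points, a contradiction.

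The crux and main obstacle is the density of each $G_{f^p,\nu_i}$ established in the second paragraph: this is the only step that really uses topological transitivity of $(X,f^p)$. Without that hypothesis the generic sets can be confined to proper closed $f^p$-invariant subsets (as on $\{0,1\}\times\mathbb{T}^1$ with $f(i,\theta)=(1-i,\theta+\alpha)$, whose two generic sets for $f^2$ are the two connected components), so $\varphi^*$ need not be nowhere continuous and the Baire-category obstruction disappears.
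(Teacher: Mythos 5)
Your proof is correct. The first half is the same as the paper's: invoke Lemma \ref{Lemma 2.14} to get the orbit of $\nu$ under $f_*$, observe that (under the contradiction hypothesis $Q^e_{f^p}(X)=X$) the generic sets $G_{f^p,\nu_i}$ partition $X$ and are cyclically permuted by $f$, take a transitive point for $f^p$, note that the $f^p$-invariant piece containing it must be dense, and deduce that all $p$ pieces are dense. Where you diverge is the Baire-category finish. The paper observes (just before the lemma) that for any $\mu\in\mathcal{M}_f(X)$ the set $G^{f,\mu}=\{x:V_f(x)\supset\{\mu\}\}$ is $G_\delta$, so each $G^{f^p,\nu_i}\supset G_{f^p,\nu_i}$ is a dense $G_\delta$; intersecting $G^{f^p,\nu_0}$ with $G^{f^p,\nu_1}$ produces a point whose accumulation set $V_{f^p}$ contains two distinct ergodic measures, contradicting $Q^e_{f^p}(X)=X$. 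You instead pick $\varphi\in C(X,\mathbb{R})$ separating $\nu_0$ and $\nu_1$, note that the Birkhoff averages $g_n$ converge pointwise to the $\mathcal{M}_{f^p}^e$-valued ``label function'' $\varphi^*$, invoke Baire's theorem on Baire class $1$ functions to get a continuity point, and contradict that with the everywhere-positive oscillation forced by density of each piece. Both are valid Baire-category arguments; the paper's is slightly more economical given that the $G_\delta$ structure of $G^{f,\mu}$ is already recorded, while yours replaces that observation with the classical Baire-1 continuity-point theorem and is a nice, self-contained alternative. Your closing remark correctly identifies transitivity of $(X,f^p)$ as the essential hypothesis, and the $\{0,1\}\times\mathbb{T}^1$ example you give is exactly the right counterexample when it fails.
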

\begin{proof}
	Suppose that $Q^e_{f^p}(X)=X$, since $(X,f^p)$ is topological transitive, there exists $x_0\in X$ such that $orb(x_0,f^p):=\{f^{pi}x_0:i\in\mathbb{N}\}$ is dense in $X$. Since $(X,\mu,f^p)$ is not uniquely ergodic, by Lemma \ref{Lemma 2.14}, there exists $\nu\in\mathcal{M}_{f^p}^e(X)$ such that $\nu\circ f^{-i}\neq\nu\circ f^{-j}$ for any $0\leq i<j\leq p-1$ and $\mathcal{M}_{f^p}^e(X)=\{\nu\circ f^{-i}: 0\leq i\leq p-1\}$. Hence, $$X=\bigcup_{i=0}^{p-1}G_{f^p,\nu\circ f^{-i}}=\bigcup_{i=0}^{p-1}f^i(G_{f^p,\nu}).$$ As a result, there exists $0\leq j_0\leq p-1$ such that $x_0\in f^{j_0}(G_{f^p,\nu})$. Since $f^{j_0}(G_{f^p,\nu})$ is $f^p$-invariant, we have that $f^{j_0}(G_{f^p,\nu})$ is dense in $X$. Since $f$ is a homeomorphism, we have that $f^i(G_{f^p,\nu})$ is dense in $X$ for any $0\leq i\leq p-1$, which means that $G_{f^p,\nu\circ f^{-i}}$ is dense in $X$ for any $0\leq i\leq p-1$. Since $G_{f^p,\nu\circ f^{-i}}\subset G^{f^p,\nu\circ f^{-i}}$, we have that $G^{f^p,\nu\circ f^{-i}}$ is a dense $G_\delta$ subset of $X$ for any $0\leq i\leq p-1$. In particular, $G^{f^p,\nu}\cap G^{f^p,\nu\circ f^{-1}}\neq\emptyset$. Hence, there exists $y_0\in X$ with $V_{f^p}(y_0)\supset\{\nu,\nu\circ f^{-1}\}$, it's a contradiction to $Q^e_{f^p}(X)=X$. Therefore, we have that $Q^e_{f^p}(X)\neq X$. 
\end{proof}
\subsection{Proof of Theorem \ref{Theorem D}} Since $(X,\mu,f)$ is not totally uniquely ergodic, there exists $d\geq2$ such that $(X,\mu,f^d)$ is not uniquely ergodic. By Proposition \ref{Proposition 2.13}, there exists a prime $p$ dividing $d$ such that $(X,\mu,f^p)$ is not uniquely ergodic. Since $(X,f)$ is totally transitive, we have that $(X,f^p)$ is topological transitive, by Lemma \ref{Lemma 3.1}, we have that $Q^e_{f^p}(X)\neq X$. By Theorem \ref{Theorem C}, we have that for any $m\geq p$, there exists a non-uniform $A\in C(X,\mathrm{GL}(m))$.\qed

\section{Proofs of Theorem \ref{Theorem E}, Corollary \ref{Corollary A} and Theorem \ref{Theorem F}}\label{section 5}
We introduce some preliminary results before we prove Theorem \ref{Theorem E} and Corollary \ref{Corollary A}. 
\subsection{Isomorphism of probability spaces and measure-preserving transformations}
\subsubsection{Isomorphism of probability spaces}
Two probability spaces $(X_1,\mathfrak{A}_1,\mu_1)$ and $(X_2,\mathfrak{A}_2,\mu_2)$ is \emph{isomorphic} if there exist $Y_1\in\mathfrak{A}_1$, $Y_2\in\mathfrak{A}_2$ with $\mu_1(Y_1)=\mu_2(Y_2)=1$ and an invertible measure-preserving transformation $\phi:Y_1\to Y_2$, where the space $Y_i$ is assumed to be equipped with the $\sigma$-algebra $Y_i\cap\mathfrak{A}_i:=\{Y_i\cap A:A\in\mathfrak{A}_i\}$ for $i=1,2$.
\subsubsection{Lebesgue space}
We use the classic definition of Lebesgue space. A probability space $(X,\mathfrak{A},\mu)$ is a \emph{Lebesgue space} if isomorphic to $([0,1],\mathfrak{B}([0,1]),l)$, where $l$ is the Lebesgue measure on closed unit interval $[0,1]$. The following is a classic result for Lebesgue space.
\begin{Lem}\cite[Theorem 2.1]{Walters-1982}\label{Lemma 5.1}
	Suppose that $X$ is a complete separable metric space and $\mu$ is a non-atomic Borel probability measure, then $(X,\mathfrak{B}(X),\mu)$ is a Lebesgue space.
\end{Lem}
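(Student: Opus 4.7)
The plan is to construct the isomorphism in two stages: first reduce $X$ to the unit interval as a measurable space via a Borel embedding, then convert the pushforward probability measure to Lebesgue measure through the cumulative distribution function.

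For the first stage, the key input is separability: $X$ admits a countable topological base $\{U_n\}_{n\geq1}$, so its Borel $\sigma$-algebra is countably generated and separates points. I would define $\Phi:X\to\{0,1\}^{\mathbb{N}}$ by $\Phi(x)_n=\mathbf{1}_{U_n}(x)$, which is a Borel injection, and then compose with a binary-expansion identification of $\{0,1\}^{\mathbb{N}}$ with a subset of $[0,1]$ to obtain a Borel injection $\Phi:X\to[0,1]$. Because $X$ is complete separable, the classical Kuratowski--Souslin theorem (an injective Borel image of a Polish space is Borel) implies that $\Phi(X)\in\mathfrak{B}([0,1])$ and that $\Phi$ is a Borel isomorphism onto its image. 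The pushforward $\mu^{*}:=\Phi_{*}\mu$ is then a non-atomic Borel probability measure on $[0,1]$ supported in $\Phi(X)$.

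For the second stage, set $F(t):=\mu^{*}([0,t])$. Non-atomicity of $\mu^{*}$ forces $F$ to be continuous, and a direct check on intervals of the form $[0,s]$ combined with outer regularity gives $F_{*}\mu^{*}=l$. The failure of injectivity of $F$ occurs exactly on the union $Z$ of the maximal open intervals on which $F$ is constant; each such interval has $\mu^{*}$-measure zero, so $\mu^{*}(Z)=0$, while the image $F(Z)$ is at most countable, hence $l$-null. Off these two exceptional Borel null sets, $F$ becomes a continuous strictly increasing bijection with Borel inverse. Composing $F\circ\Phi$ and restricting to the resulting full-measure complements yields the sought invertible measure-preserving map between $(X,\mathfrak{B}(X),\mu)$ and $([0,1],\mathfrak{B}([0,1]),l)$.

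The main obstacle is the first stage, specifically the need for the Kuratowski--Souslin theorem to upgrade the Borel injection $\Phi$ into a Borel isomorphism onto its image; this uses completeness of $X$ in an essential way, and is really where the hypothesis that $X$ is Polish (rather than merely separable metric) enters. If one prefers to avoid that abstract input, a hands-on alternative is to bypass $\{0,1\}^{\mathbb{N}}$ and build directly a refining sequence of finite Borel partitions of $X$ into cells of prescribed $\mu$-measure (possible since $\mu$ is non-atomic) with diameters tending to zero; coding each $x\in X$ by the sequence of dyadic addresses of the cells containing it produces a map into $[0,1]$ that intertwines $\mu$ with Lebesgue measure, and completeness of $X$ together with the shrinking-diameter condition makes the coding essentially bijective on a full-measure set, giving the same conclusion.
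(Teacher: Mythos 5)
The paper does not prove this lemma; it is quoted verbatim from Walters' textbook, so there is no in-paper argument to compare against. Your two-stage plan (Borel-embed $X$ into $[0,1]$, then straighten the pushforward measure via its distribution function) is the standard route and is sound in outline, but two steps fail as literally written.

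First, the binary-expansion map $\{0,1\}^{\mathbb N}\to[0,1]$, $(a_n)\mapsto\sum a_n 2^{-n}$, is not injective: every dyadic rational in $(0,1)$ has two preimages, so the composite $\Phi:X\to[0,1]$ you describe need not be injective, and the Kuratowski--Souslin upgrade then does not apply. The repair is easy: replace binary expansion by the injection of $\{0,1\}^{\mathbb N}$ onto the middle-thirds Cantor set via $(a_n)\mapsto\sum 2a_n 3^{-n}$, or simply invoke the Borel isomorphism theorem to identify $\{0,1\}^{\mathbb N}$ with $[0,1]$ outright. Second, removing only the union $Z$ of the maximal \emph{open} intervals on which $F$ is constant does not make $F$ injective on the complement: if $(a,b)$ is such a maximal interval then $F(a)=F(b)$ by continuity, yet both endpoints $a,b$ survive in $[0,1]\setminus Z$. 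You must delete the maximal \emph{closed} intervals of constancy; these are still pairwise disjoint, still countable in number, still $\mu^{*}$-null (each has $\mu^{*}$-measure $F(b)-F(a)=0$ by non-atomicity), and their image under $F$ is still countable hence $l$-null, so the rest of the argument goes through unchanged. Your hands-on alternative via refining partitions with prescribed dyadic masses and shrinking diameters is the Rokhlin-style construction and is also workable, though as sketched it leaves the compatibility of the two constraints (measure control and diameter control within a single refining sequence) unaddressed; it requires a small interleaving argument.
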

\subsubsection{Isomorphism of measure-preserving transformations}
\begin{Def}\label{Definition 5.2}
	A measure-preserving transformation $(X_1,\mathfrak{A}_1,\mu_1,f_1)$ is said to be \emph{a factor of } a measure-preserving transformation $(X_2,\mathfrak{A}_2,\mu_2,f_2)$ if there exists $Y_1\in\mathfrak{A}_1$, $Y_2\in\mathfrak{A}_2$ with $\mu_1(Y_1)=\mu_2(Y_2)=1$, $f_1(Y_1)\subset Y_1$, $f_2(Y_2)\subset Y_2$ and an measure-preserving map $\phi:Y_1\to Y_2$, where the space $Y_i$ is assumed to be equipped with the $\sigma$-algebra $Y_i\cap\mathfrak{A}_i:=\{Y_i\cap A:A\in\mathfrak{A}_i\}$ for $i=1,2$, such that $\phi\circ f_1(x)=f_2\circ\phi(x)$ for any $x\in Y_1$.
\end{Def}
\begin{Def}\label{Definition 5.3}
	Two measure-preserving transformations $(X_1,\mathfrak{A}_1,\mu_1,f_1)$ and $(X_2,\mathfrak{A}_2,\mu_2,f_2)$ is said to be \emph{isomorphic} if there exists $Y_1\in\mathfrak{A}_1$, $Y_2\in\mathfrak{A}_2$ with $\mu_1(Y_1)=\mu_2(Y_2)=1$, $f_1(Y_1)\subset Y_1$, $f_2(Y_2)\subset Y_2$ and an invertible measure-preserving map $\phi:Y_1\to Y_2$, where the space $Y_i$ is assumed to be equipped with the $\sigma$-algebra $Y_i\cap\mathfrak{A}_i:=\{Y_i\cap A:A\in\mathfrak{A}_i\}$ for $i=1,2$, such that $\phi\circ f_1(x)=f_2\circ\phi(x)$ for any $x\in Y_1$.
\end{Def}

When $(X_1,\mathfrak{A}_1,\mu_1,f_1)$ is isomorphic to $(X_2,\mathfrak{A}_2,\mu_2,f_2)$, it's known that 
\begin{enumerate}[(1)]
	\item $(X_1,\mathfrak{A}_1,\mu_1,f_1)$ is ergodic if and only if $(X_2,\mathfrak{A}_2,\mu_2,f_2)$ is ergodic;
	\item $(X_1,\mathfrak{A}_1,\mu_1,f_1^n)$ is isomorphic to $(X_2,\mathfrak{A}_2,\mu_2,f_2^n)$ for any $n\geq1$.
\end{enumerate}
\begin{Lem}\cite[Theorem 4.11]{Walters-1982}\label{Lemma 5.4}
	Suppose that $(X_1,\mathfrak{A}_1,\mu_1,f_1)$ is isomorphic to $(X_2,\mathfrak{A}_2,\mu_2,f_2)$, then $h_{\mu_1}(f_1)=h_{\mu_2}(f_2)$.
\end{Lem}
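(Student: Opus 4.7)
The plan is to exploit the standard variational formula
\[
h_{\mu}(f)=\sup_{\xi}h_{\mu}(f,\xi),\qquad h_{\mu}(f,\xi)=\lim_{n\to\infty}\frac{1}{n}H_{\mu}\!\Bigl(\bigvee_{i=0}^{n-1}f^{-i}\xi\Bigr),
\]
where the supremum ranges over finite measurable partitions $\xi$, and to show that the isomorphism transports partitions in an entropy-preserving way. Let $\phi:Y_1\to Y_2$ be the invertible measure-preserving map from Definition \ref{Definition 5.3}, with $\phi\circ f_1=f_2\circ\phi$ on $Y_1$ and $\mu_i(Y_i)=1$, $f_i(Y_i)\subset Y_i$. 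Since all entropies are computed up to $\mu_i$-null sets, I can freely restrict attention to the invariant full-measure sets $Y_1,Y_2$.

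First I would verify that $\xi\mapsto\phi^{-1}\xi$ (taking preimages of each atom) gives a bijection, modulo null sets, between finite $\mu_2$-measurable partitions of $Y_2$ and finite $\mu_1$-measurable partitions of $Y_1$; its inverse is $\eta\mapsto\phi(\eta)$, which is well defined because $\phi$ is invertible mod null sets. Because $\phi$ is measure-preserving, for any partition $\xi=\{A_1,\dots,A_k\}$ of $Y_2$ we have $\mu_1(\phi^{-1}A_j)=\mu_2(A_j)$, so
\[
H_{\mu_1}(\phi^{-1}\xi)=-\sum_{j}\mu_1(\phi^{-1}A_j)\log\mu_1(\phi^{-1}A_j)=H_{\mu_2}(\xi).
\]
Next, using $\phi\circ f_1=f_2\circ\phi$ (hence $\phi\circ f_1^{-1}=f_2^{-1}\circ\phi$ on the full-measure set where both sides are defined), I get the key commutation relation $\phi^{-1}\!\bigl(f_2^{-i}\xi\bigr)=f_1^{-i}(\phi^{-1}\xi)$ for every $i\geq 0$, and therefore
\[
\phi^{-1}\!\Bigl(\bigvee_{i=0}^{n-1}f_2^{-i}\xi\Bigr)=\bigvee_{i=0}^{n-1}f_1^{-i}(\phi^{-1}\xi).
\]

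Combining these two facts yields $H_{\mu_1}(\bigvee_{i=0}^{n-1}f_1^{-i}(\phi^{-1}\xi))=H_{\mu_2}(\bigvee_{i=0}^{n-1}f_2^{-i}\xi)$ for every $n$, whence $h_{\mu_1}(f_1,\phi^{-1}\xi)=h_{\mu_2}(f_2,\xi)$. Taking the supremum over $\xi$ and invoking the bijection between partitions then gives $h_{\mu_1}(f_1)\geq h_{\mu_2}(f_2)$; the reverse inequality follows by exchanging the roles of the two systems via $\phi^{-1}$. Hence $h_{\mu_1}(f_1)=h_{\mu_2}(f_2)$.

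The only genuine subtlety, and the step I would be most careful about, is bookkeeping of null sets: the map $\phi$ is only defined on $Y_1\to Y_2$, its inverse only on a full-measure subset of $Y_2$, and the identity $\phi\circ f_1=f_2\circ\phi$ holds on $Y_1$. One must check that after possibly shrinking $Y_1$ and $Y_2$ to smaller $f_i$-invariant full-measure sets on which $\phi$ and $\phi^{-1}$ are everywhere defined and genuinely inverse, everything above is well posed; this is standard but worth doing explicitly. Once that is in hand, all the algebraic manipulations above are literally identities, and the result follows with no further computation.
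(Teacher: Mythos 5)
Your argument is correct and is exactly the standard proof from Walters's textbook (Theorem~4.11), which the paper simply cites without reproducing: transport partitions through the conjugating map $\phi$, note that $\phi$ preserves Shannon entropy of partitions, use the conjugacy to commute $\phi^{-1}$ past iterated preimages, and take suprema, with the reverse inequality obtained by swapping the roles of the two systems. The null-set bookkeeping you flag is indeed the only delicacy and is handled in the cited source in the way you outline.
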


\begin{Lem}\cite[Theorem]{Lehrer-1987}\label{Lemma 5.5}
	Suppose that $(X,\mathfrak{B}(X),\mu,f)$ is an invertible ergodic measure preserving transformation of a Lebesgue space $(X,\mathfrak{B}(X),\mu)$, then $(X,\mathfrak{B}(X),\mu,f)$ is isomorphic to a $(Y,\mathfrak{B}(Y),\nu,g)$ such that $(Y,g)$ is a topological mixing dynamical system and $(Y,\nu,g)$ is uniquely ergodic.
\end{Lem}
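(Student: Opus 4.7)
The plan is to prove this by a strengthening of the Jewett--Krieger theorem, following Lehrer. Jewett--Krieger already produces, from any ergodic invertible $(X,\mathfrak{B}(X),\mu,f)$ on a Lebesgue space, a measure-theoretic isomorphism to a uniquely ergodic minimal homeomorphism of a Cantor set; the essential extra work is to arrange \emph{topological mixing} for the model while preserving both the isomorphism and unique ergodicity. The overall scheme is a symbolic coding via nested Rokhlin towers in which the free ``error strip'' at each stage is filled by a topologically mixing template.

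\textbf{Step 1: Symbolic coding set-up.} Since $(X,\mathfrak{B}(X),\mu)$ is Lebesgue, fix a refining sequence of finite measurable partitions $\mathcal{P}_1\preceq\mathcal{P}_2\preceq\cdots$ whose join generates $\mathfrak{B}(X)$ modulo $\mu$-null sets. I will build inductively a nested family of subshift approximations in alphabets $\mathcal{A}_n$, whose inverse limit gives a subshift $(Y,g)\subset\mathcal{A}^{\mathbb{Z}}$, together with a coding map $\phi\colon X\to Y$ sending $x$ to the sequence of labels attached to the orbit $(f^k x)_{k\in\mathbb{Z}}$.

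\textbf{Step 2: Rokhlin towers with mixing spacers.} At stage $n$ choose, by Rokhlin's lemma, a tower of height $h_n$ with base $B_n$ and error $\mu\bigl(X\setminus\bigsqcup_{i=0}^{h_n-1}f^iB_n\bigr)<\varepsilon_n$, with $\varepsilon_n\to 0$ fast enough for the standard Borel--Cantelli / diagonal argument to make $\phi$ well-defined a.e.\ and injective modulo null sets. Label each column of the tower by a word over $\mathcal{A}_n$ refining $\mathcal{P}_n$, exactly as in Jewett--Krieger, so that the empirical frequencies of column-words converge uniformly along every orbit --- this is what will yield unique ergodicity. The new ingredient is to reserve a sub-alphabet of ``mixing letters'' and to code the error strip using an explicit uniquely ergodic topologically mixing subshift $(Z_n,g_n)$ (e.g.\ an Adler--Weiss or cutting-and-stacking construction). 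The columns are arranged to share long common mixing prefixes and suffixes drawn from $(Z_n,g_n)$, so that any two admissible $n$-stage words can be concatenated with a connector of every sufficiently large length.

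\textbf{Step 3: Passage to the limit and verification.} Let $Y$ be the inverse limit of the stage-$n$ subshifts and $g$ the shift on $Y$; continuity gives that $Y$ is a closed shift-invariant subset of $\mathcal{A}^{\mathbb{Z}}$. Set $\nu:=\phi_\ast\mu$. The diagonal argument from Jewett--Krieger, combined with the rapid decay of $\varepsilon_n$, shows $\phi$ is a measure-preserving bijection on a full-measure invariant set, giving the isomorphism in the sense of Definition~\ref{Definition 5.3}; in particular $\nu$ is ergodic and $h_\nu(g)=h_\mu(f)$ by Lemma~\ref{Lemma 5.4}. Unique ergodicity follows from the standard Jewett--Krieger criterion: the uniform convergence of column-word frequencies along every orbit forces every invariant measure to coincide with $\nu$ on cylinders, hence everywhere. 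Topological mixing follows from the spacer design: for any cylinders $[u],[v]\subset Y$ and every $N$ larger than a threshold depending on $u,v$, the mixing spacers yield an admissible word $uwv$ of total length $|u|+N+|v|$, so $g^{N+|u|}[u]\cap[v]\neq\emptyset$ for all large $N$.

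\textbf{Main obstacle.} The hard part is Step~2: simultaneously enforcing unique ergodicity and topological mixing. Unique ergodicity requires tight uniform control on the frequencies of every admissible word, which is naturally threatened by inserting free mixing spacers; mixing requires that arbitrarily many long connectors be available between any pair of admissible blocks, which is naturally threatened by rigidly prescribed column labels. Lehrer's argument balances these tensions by letting the spacer length $\ell_n$ grow much faster than the tower height $h_n$ but much slower than $1/\varepsilon_n$, so that the mixing letters occupy vanishing $\mu$-mass (preserving frequencies, hence unique ergodicity) yet every pair of columns is connected by spacer-words of all large lengths (delivering mixing). Carrying out this combinatorial bookkeeping across stages, and checking that the limit subshift is the one cut out by the coding, is the technical core of the argument.
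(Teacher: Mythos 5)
The paper does not prove Lemma~\ref{Lemma 5.5}: it is stated as a citation of the main theorem of Lehrer~\cite{Lehrer-1987}, and the authors rely on it as a black box. So there is no ``paper's own proof'' against which to compare your argument; what you have written is an attempted reconstruction of Lehrer's proof.

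As a high-level sketch your plan is in the right spirit. Lehrer does indeed build on the Jewett--Krieger construction, coding $(X,\mu,f)$ through nested Rokhlin towers into a symbolic model, and the extra work is to arrange the coding so that the resulting subshift is topologically mixing while the frequency estimates that give unique ergodicity survive. Your description of the tension --- flexibility of connectors is needed for mixing, rigidity of word frequencies is needed for unique ergodicity --- is exactly the heart of the matter.

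However, what you have is an outline, not a proof, and there is a concrete gap beyond the bookkeeping you defer. You assert that taking the spacer length $\ell_n$ to grow ``much faster than the tower height $h_n$ but much slower than $1/\varepsilon_n$'' makes the mixing letters occupy vanishing $\mu$-mass while delivering connectors of every large length. This does not hold as stated. The $\mu$-mass of the error strip is already controlled by $\varepsilon_n$ independently of $\ell_n$; conversely, if individual spacer words were genuinely longer than $h_n$ they could not fit inside a stage-$n$ tower picture at all, and the spacer mass would not be small. What Lehrer actually needs is not long spacers but a \emph{rich family} of admissible connecting words of varying lengths between any two column words, with the lengths forming a set that is eventually cofinite, and with the relative frequency of spacer symbols uniformly small along every orbit. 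Arranging that --- and then verifying that these admissibility constraints propagate consistently through all stages of the refinement, so that the limiting subshift $Y$ is exactly the closure of the coded orbits rather than something larger --- is the technical core that your Step~2 and Step~3 label ``standard'' or ``the main obstacle'' without actually carrying out. A correct write-up would either import Lehrer's Lemma-by-Lemma construction or supply an equivalent explicit connector scheme with the frequency estimates; without that, the proposal records the strategy but does not establish the lemma.
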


\subsection{Weakly mixing measures}
\begin{Lem}\cite[Corollary 9.21]{Eisner-Farkas-Haase-Nagel-2015}\label{Lemma 5.6}
	Suppose that $(X,\mu,f)$ is weakly mixing, then $(X,\mu,f^n)$ is weakly mixing for any $n\in\mathbb{N}^+$.
\end{Lem}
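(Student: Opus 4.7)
The plan is to invoke the spectral characterization of weak mixing: $(X,\mu,f)$ is weakly mixing if and only if the Koopman operator $U_f\colon L^2(X,\mu)\to L^2(X,\mu)$, $U_f\varphi:=\varphi\circ f$, has no nonconstant eigenfunctions, equivalently, $U_f$ has no eigenvalues on the invariant subspace $L_0^2:=\{\varphi\in L^2(X,\mu):\int\varphi\,d\mu=0\}$. This equivalence between product-ergodicity (the definition used in this paper) and the absence of nonconstant eigenfunctions is classical (see e.g.\ Walters, Chapter~6). Granting it, the lemma reduces to a purely operator-theoretic claim: if $U_f$ has no eigenvalues on $L_0^2$, then neither does $U_f^{\,n}=U_{f^n}$.

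I would proceed by contradiction. Suppose $(X,\mu,f^n)$ is not weakly mixing, so there exist $\varphi\in L_0^2\setminus\{0\}$ and $\lambda\in\mathbb{C}$ with $U_f^{\,n}\varphi=\lambda\varphi$, and $|\lambda|=1$ since $U_f^{\,n}$ is unitary. Form the subspace
$$V:=\operatorname{span}\bigl\{\varphi,\,U_f\varphi,\,U_f^{\,2}\varphi,\,\ldots,\,U_f^{\,n-1}\varphi\bigr\}\subset L^2(X,\mu).$$
The relation $U_f(U_f^{\,n-1}\varphi)=U_f^{\,n}\varphi=\lambda\varphi\in V$ shows $V$ is finite-dimensional and $U_f$-invariant. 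Moreover $\int U_f^{\,k}\varphi\,d\mu=\int\varphi\,d\mu=0$ for every $k\ge 0$ by $f$-invariance of $\mu$, so $V\subset L_0^2$. A direct computation also gives $U_f^{\,n}\psi=\lambda\psi$ for every $\psi\in V$, i.e.\ $U_f|_V$ satisfies $(U_f|_V)^{n}=\lambda\,\mathrm{Id}_V$.

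Because $U_f|_V$ is unitary on a finite-dimensional Hilbert space, it is diagonalizable, and therefore admits a nonzero eigenvector $\psi\in V$ with $U_f\psi=\eta\psi$. The identity $(U_f|_V)^{n}=\lambda\,\mathrm{Id}_V$ forces $\eta^{\,n}=\lambda$, but more importantly $\psi\in L_0^2\setminus\{0\}$ is a nonconstant eigenfunction of $U_f$, contradicting weak mixing of $(X,\mu,f)$. Hence $(X,\mu,f^n)$ must be weakly mixing.

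The main point requiring care is the construction of the cyclic subspace $V$ and the verification of its three properties, namely $U_f$-invariance, containment in $L_0^2$, and the existence of an eigenvector for $U_f|_V$; all three are immediate from the cyclic relation $U_f^{\,n}\varphi=\lambda\varphi$, the $f$-invariance of the integral, and the spectral theorem for unitary operators in finite dimensions, so once the spectral characterization of weak mixing is granted the argument is essentially one paragraph of linear algebra. No further ergodic-theoretic input is needed.
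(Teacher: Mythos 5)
The paper does not prove this lemma: it is cited directly from Eisner--Farkas--Haase--Nagel (Corollary~9.21), so there is no in-paper argument to compare against. Your proof is correct and self-contained. The reduction to the spectral criterion---$(X,\mu,g)$ weakly mixing iff the Koopman operator $U_g$ has no eigenvalue on $L_0^2$---is valid in the paper's setting because $X$ is a compact metric space, so $(X,\mathfrak{B}(X),\mu)$ is standard and $L^2(X,\mu)$ is separable; note also that this formulation already subsumes ergodicity, since a nonergodic $g$ yields a nonconstant $U_g$-invariant function whose $L_0^2$-projection is a nonzero eigenvector with eigenvalue $1$. Given $U_f^{\,n}\varphi=\lambda\varphi$ with $0\neq\varphi\in L_0^2$, your cyclic subspace $V=\operatorname{span}\{\varphi,U_f\varphi,\dots,U_f^{n-1}\varphi\}$ is $U_f$-invariant; since $U_f$ is an isometry and $V$ is finite-dimensional, $U_f|_V$ is unitary on $V$ and hence has an eigenvector $\psi\in V\subset L_0^2$, and any nonzero element of $L_0^2$ is automatically nonconstant, so $\psi$ contradicts weak mixing of $(X,\mu,f)$. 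One cosmetic remark: $V$ is finite-dimensional simply because it is spanned by $n$ vectors; the relation $U_f^{\,n}\varphi=\lambda\varphi$ is what yields $U_f$-invariance, not finite-dimensionality. The cited textbook obtains the result from its operator-theoretic characterizations of weak mixing (via the Jacobs--de Leeuw--Glicksberg splitting of the Koopman operator); your linear-algebra argument is a lighter route to the same fact and is entirely adequate here.
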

\begin{Lem}\cite[Theorem 9.23]{Eisner-Farkas-Haase-Nagel-2015}\label{Lemma 5.7}
	Suppose that $(X,\mu,f)$ is weakly mixing and $(Y,\nu,g)$ is ergodic, then $(X\times Y,\mu\times\nu,f\times g)$ is ergodic.
\end{Lem}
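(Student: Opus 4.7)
The plan is to verify ergodicity of $(X \times Y, \mu \times \nu, f \times g)$ via the Cesàro-average criterion for ergodicity: for every pair $F, G \in L^2(\mu \times \nu)$,
\[
\frac{1}{n} \sum_{k=0}^{n-1} \int F \cdot (G \circ T^k) \, d(\mu \times \nu) \longrightarrow \int F\, d(\mu\times\nu)\cdot\int G\, d(\mu\times\nu),
\]
where $T = f\times g$. The key analytic input is the standard Koopman--von Neumann characterization of weak mixing, which I would invoke: $(X,\mu,f)$ is weakly mixing iff for all $\varphi_1, \varphi_2 \in L^2(\mu)$,
\[
\frac{1}{n}\sum_{k=0}^{n-1}\left|\int \varphi_1 (\varphi_2\circ f^k)\,d\mu - \int\varphi_1\,d\mu\int\varphi_2\,d\mu\right| \longrightarrow 0.
\]

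Next, by bilinearity and the density of $L^\infty(\mu) \otimes L^\infty(\nu)$ in $L^2(\mu\times\nu)$, combined with the uniform Cauchy--Schwarz bound $|\int F\cdot(G\circ T^k)\,d(\mu\times\nu)|\le \|F\|_2\|G\|_2$, it suffices to verify the Cesàro convergence for tensor products $F = \varphi_1\otimes\psi_1$ and $G = \varphi_2\otimes\psi_2$ with $\varphi_i\in L^\infty(\mu)$ and $\psi_i\in L^\infty(\nu)$. For such products, Fubini factorizes the $k$-th summand as $I_k = A_k B_k$, where
\[
A_k := \int \varphi_1 (\varphi_2\circ f^k)\,d\mu,\qquad B_k := \int \psi_1(\psi_2\circ g^k)\,d\nu.
\]

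Then I would write $A_k = \bar A + a_k$ with $\bar A := \int\varphi_1\,d\mu\int\varphi_2\,d\mu$, so that
\[
\frac{1}{n}\sum_{k=0}^{n-1} I_k \;=\; \bar A\cdot\frac{1}{n}\sum_{k=0}^{n-1} B_k \;+\; \frac{1}{n}\sum_{k=0}^{n-1} a_k B_k.
\]
The mean ergodic theorem applied to $(Y,\nu,g)$ yields $\frac{1}{n}\sum_{k=0}^{n-1} B_k \to \int\psi_1\,d\nu\int\psi_2\,d\nu$, so the first term converges to the required product $\int F\,d(\mu\times\nu)\int G\,d(\mu\times\nu)$. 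For the second term, the uniform bound $|B_k|\le \|\psi_1\|_\infty\|\psi_2\|_\infty$ combined with $\frac{1}{n}\sum_{k=0}^{n-1}|a_k|\to 0$ (from the Koopman--von Neumann characterization applied to $\varphi_1,\varphi_2$) gives convergence to zero.

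The main obstacle is justifying the density reduction rigorously: one must check that the Cesàro bilinear forms $\Phi_n(F,G) := \frac{1}{n}\sum_{k=0}^{n-1}\int F\cdot(G\circ T^k)\,d(\mu\times\nu)$ are uniformly bounded in $n$ and jointly continuous in $(F,G)\in L^2\times L^2$, so that pointwise convergence on the dense subspace of tensor products extends to all of $L^2(\mu\times\nu)\times L^2(\mu\times\nu)$. This follows from the same Cauchy--Schwarz bound together with a routine $3\varepsilon$ approximation argument, but it is the step that requires care.
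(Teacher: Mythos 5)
The paper does not prove this lemma; it simply cites it as \cite[Theorem 9.23]{Eisner-Farkas-Haase-Nagel-2015}. Your argument is a correct rendition of the standard proof: the Ces\`aro correlation criterion for ergodicity, the Koopman--von Neumann $L^1$-Ces\`aro characterization of weak mixing, Fubini on rectangles, the mean ergodic theorem on the $Y$-factor to handle $\bar A\cdot\frac1n\sum B_k$, and the uniform bound on $B_k$ to kill $\frac1n\sum a_kB_k$. The density reduction you flag is indeed the only point needing care, and it goes through exactly as you indicate: the bilinear forms $\Phi_n$ and the target $\Phi(F,G)=\int F\int G$ are all bounded by $\|F\|_2\|G\|_2$, so convergence on the dense subspace of finite sums of tensors in $L^\infty(\mu)\otimes L^\infty(\nu)$ extends to $L^2\times L^2$ by a $3\varepsilon$ argument. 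No gap; this is essentially the argument in the cited reference (phrased there in operator-theoretic language via the Jacobs--de Leeuw--Glicksberg decomposition, but your elementary correlation-sum version is equivalent).
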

\begin{Lem}\label{Lemma 5.8}
	There exists a dynamical system $(X,f)$ and $\mu\in\mathcal{M}_f(X)$ such that $(X,\mu,f)$ is weakly mixing, $\mu$ is non-atomic and $h_\mu(f)=0$.
\end{Lem}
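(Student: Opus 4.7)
The plan is to obtain $(X,f,\mu)$ by topologizing a known abstract measure-preserving example via Lehrer's theorem (Lemma \ref{Lemma 5.5}). As the abstract source I would invoke Chac\'on's rank-one transformation, an invertible, ergodic measure-preserving transformation of $([0,1],\mathfrak{B}([0,1]), l)$ which is classically known to be weakly mixing and, being rank one, has zero metric entropy. Lebesgue measure $l$ is non-atomic, and by Lemma \ref{Lemma 5.1} the space $([0,1],\mathfrak{B}([0,1]),l)$ is Lebesgue; denote this system $(X_0,\mathfrak{B}(X_0),\mu_0,f_0)$.

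Next I would apply Lemma \ref{Lemma 5.5} to $(X_0,\mathfrak{B}(X_0),\mu_0,f_0)$, producing a dynamical system $(X,f)$ with unique invariant measure $\mu$ such that $(X,f)$ is topologically mixing, $(X,\mu,f)$ is uniquely ergodic, and $(X,\mathfrak{B}(X),\mu,f)$ is isomorphic to $(X_0,\mathfrak{B}(X_0),\mu_0,f_0)$ in the sense of Definition \ref{Definition 5.3}.

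To finish I would verify the three required properties. Entropy: $h_\mu(f)=h_{\mu_0}(f_0)=0$ by Lemma \ref{Lemma 5.4}. Weak mixing: the isomorphism $\phi$ realizing the equivalence induces an invertible measure-preserving map $\phi\times\phi$ on the product spaces that conjugates $(f_0\times f_0,\mu_0\times\mu_0)$ to $(f\times f,\mu\times\mu)$; since ergodicity is an isomorphism invariant and $(X_0\times X_0,\mu_0\times\mu_0,f_0\times f_0)$ is ergodic by weak mixing of $\mu_0$, the system $(X\times X,\mu\times\mu,f\times f)$ is ergodic, so $(X,\mu,f)$ is weakly mixing. Non-atomicity: if $\mu$ had an atom $A$, then $\phi^{-1}(A)$ would be an atom of $\mu_0$ (any measurable subset of $\phi^{-1}(A)$ maps under $\phi$ into $A$, hence has $\mu_0$-measure $0$ or $\mu(A)$), contradicting non-atomicity of Lebesgue measure.

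The only genuine ingredient beyond the preliminaries is the initial abstract example of a weakly mixing, zero-entropy, ergodic, non-atomic, invertible measure-preserving transformation of a Lebesgue space; Chac\'on's construction is the classical and shortest choice, but alternative examples (weakly mixing interval exchange transformations, Gaussian dynamical systems with singular continuous spectral measure, or suitable substitution subshifts) would serve equally well. Once this ingredient is invoked, the remainder is just an application of Lehrer's theorem and routine isomorphism-invariance checks.
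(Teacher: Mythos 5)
Your proof is correct, but it follows a genuinely different route from the paper's. The paper takes a hyperbolic automorphism $f$ of the $2$-torus and invokes Sigmund's genericity results: the sets of weakly mixing, of non-atomic, and of zero-entropy invariant measures are each residual in $\mathcal{M}_f(X)$, so by Baire category their intersection is non-empty, yielding the desired $\mu$. You instead start from a concrete abstract measure-preserving system (Chac\'on's rank-one transformation: weakly mixing, zero entropy, non-atomic on $([0,1],l)$) and push it into the topological category via Lehrer's theorem (Lemma \ref{Lemma 5.5}), then check that the three required properties pass through an isomorphism in the sense of Definition \ref{Definition 5.3}. All of your verification steps are sound: entropy is Lemma \ref{Lemma 5.4}; weak mixing follows because the isomorphism $\phi$ induces an isomorphism $\phi\times\phi$ of the product systems and ergodicity is an isomorphism invariant; and an atom of $\mu$, intersected with the full-measure set $Y_2$, would pull back under $\phi$ to an atom of Lebesgue measure. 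Your route front-loads a tool (Lehrer's theorem) that the paper anyway uses later in the proof of Theorem \ref{Theorem E}, and it produces a slightly stronger conclusion than the lemma demands (a uniquely ergodic, topologically mixing model), whereas the paper's choice is more economical in the sense that it only needs residuality plus Baire category and postpones Lehrer's theorem to where it is truly required. Either is acceptable; the paper's genericity argument avoids committing to a specific rank-one construction, while yours avoids citing three separate results of Sigmund.
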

\begin{proof}
	Let $(X,f)$ be a hyperbolic automorphism of $2$-torus, then it is Axiom $A$ with only one basic set $\Omega_s$, $(\Omega_s,f|_{\Omega_s})$ is topological mixing and the set of weakly mixing invariant measures is residual in $\mathcal{M}_f(X)$ \cite{Sigmund-1972}. Furthermore, the set of non-atomic invariant measures is residual in $\mathcal{M}_f(X)$ and the set of invariant measures with zero metric entropy is residual in $\mathcal{M}_f(X)$ \cite{Sigmund-1970}. As a result, there exists $\mu\in\mathcal{M}_f(X)$ such that $(X,\mu,f)$ is weakly mixing, $\mu$ is non-atomic and $h_\mu(f)=0$.
\end{proof}
\begin{Lem}\label{Lemma 5.9}
	Given $\alpha\geq0$, there exist a dynamical system $(X,f)$ and $\mu\in\mathcal{M}_f(X)$ such that $(X,\mu,f)$ is weakly mixing and $h_\mu(f)=\alpha$.
\end{Lem}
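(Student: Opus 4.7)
The plan is to split into the cases $\alpha=0$ and $\alpha>0$, with the former being immediate from Lemma \ref{Lemma 5.8} and the latter handled by an explicit Bernoulli-shift construction.

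For $\alpha=0$, Lemma \ref{Lemma 5.8} already produces a dynamical system $(X,f)$ and an $f$-invariant measure $\mu$ that is weakly mixing with $h_\mu(f)=0$, so there is nothing more to do.

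For $\alpha>0$, my first step would be to pick an integer $k\ge 2$ with $\log k\ge \alpha$ and then, using the continuity of the Shannon entropy $H(p_0,\dots,p_{k-1})=-\sum_{i=0}^{k-1}p_i\log p_i$ on the standard $(k-1)$-simplex together with the fact that $H$ takes the value $0$ at every vertex and the value $\log k$ at the uniform distribution, invoke the intermediate value theorem along a continuous path from a vertex to the barycenter to produce a probability vector $p=(p_0,\dots,p_{k-1})$ with $H(p)=\alpha$. Next, I would take $X=\{0,1,\dots,k-1\}^{\mathbb{Z}}$ with the product topology (a compact metric space), let $f$ be the left shift (a homeomorphism), and set $\mu=p^{\mathbb{Z}}$, the Bernoulli product measure with marginal $p$. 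The classical theory of Bernoulli shifts then gives $h_\mu(f)=H(p)=\alpha$, and the system $(X,\mu,f)$ is strongly mixing; in particular $(X\times X,\mu\times\mu,f\times f)$ is mixing and hence ergodic, which is precisely the definition of weak mixing adopted in this paper.

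I do not anticipate any substantial obstacle here: the argument just assembles textbook facts about the Bernoulli shift (its entropy is Shannon's $H(p)$, and it is strongly mixing), the continuity of $H$ on the simplex, and the implication strong mixing $\Rightarrow$ weak mixing. The only mild subtlety is that, should one want to allow $\alpha=+\infty$, a finite-alphabet shift will no longer suffice and one should instead take a countable-alphabet Bernoulli shift with a distribution of infinite Shannon entropy; since the statement reads $\alpha\ge 0$ and will be applied in the companion theorems with a finite value, treating $\alpha$ as a finite non-negative real is sufficient.
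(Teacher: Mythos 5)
Your proposal is correct and follows essentially the same route as the paper: the $\alpha=0$ case is handled by Lemma \ref{Lemma 5.8}, and the $\alpha>0$ case by taking a Bernoulli shift of entropy $\alpha$ (the paper cites a remark in Walters for its existence and passes through the Kolmogorov property to get weak mixing, whereas you construct the marginal explicitly via the intermediate value theorem and use strong mixing, but both are standard facts about Bernoulli shifts and the substance is the same).
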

\begin{proof}
	When $\alpha=0$, it's directly from Lemma \ref{Lemma 5.8}.
	
	When $\alpha>0$, by \cite[Remark, Page 105]{Walters-1982}, there is a Bernoulli shift $(X,\mu,f)$ with $h_{\mu}(f)=\alpha$, and thus it's a Kolmogorov automorphism \cite[Theorem 4.30]{Walters-1982}, hence, it's weakly mixing \cite[Corollary 4.33.1]{Walters-1982}.
\end{proof}

Now, we begin the proofs of Theorem \ref{Theorem E} and Corollary \ref{Corollary A}.
\subsection{Proof of Theorem \ref{Theorem E}}
Given a weakly mixing measure $\mu\in\mathcal{M}_f(X)$ and a prime $p$, let $X_1:=\{x,f_1x,f_1^2x,\cdots,f_1^{p-1}x\}$, where $f_1:X_1\to X_1$ is a homeomorphism and $x$ is a periodic point with minimum period $p$. Denote $\mu_1:=\frac{1}{p}\sum_{i=0}^{p-1}\delta_{f_1^ix}$, then $h_{\mu_1}(f_1)=0$, $(X_1,\mu_1,f_1^i)$ is ergodic for any $1\leq i\leq p-1$ and $(X_1,\mu_1,f_1^p)$ is not ergodic. Let $(X_2,\mu_2,f_2)$ be a dynamical system from Lemma \ref{Lemma 5.8}, denote $Y_1=X_1\times X_2, \nu_1=\mu_1\times\mu_2, g_1=f_1\times f_2$, then $\nu_1$ is non-atomic and by \cite[Theorem 4.23]{Walters-1982}, $h_{\nu_1}(g_1)=h_{\mu_1}(f_1)+h_{\mu_2}(f_2)=0$. And by Lemma \ref{Lemma 5.6} and Lemma \ref{Lemma 5.7}, $(Y_1, \nu_1,g_1^i)$ is ergodic for any $1\leq i\leq p-1$ and $(Y_1,\nu_1,g_1^p)$ is not ergodic.  Denote $Y_2=X\times Y_1$, $\nu_2=\mu\times\nu_1$, $g_2=f\times g_1$, then $\nu_2$ is non-atomic and by \cite[Theorem 4.23]{Walters-1982}, $h_{\nu_2}(g_2)=h_{\mu}(f)+h_{\nu_1}(g_1)=h_{\mu}(f)$. And by Lemma \ref{Lemma 5.6} and Lemma \ref{Lemma 5.7}, $(Y_2, \nu_2,g_2^i)$ is ergodic for any $1\leq i\leq p-1$ and $(Y_2,\nu_2,g_2^p)$ is not ergodic. Furthermore, $(X,\mathfrak{B}(X),\mu,f)$ is a factor of $(Y_2,\mathfrak{B}(Y_2)\nu_2,g_2)$. By Lemma \ref{Lemma 5.1}, $(Y_2,\mathfrak{B}(Y_2),\nu_2)$ is a Lebesgue space and thus by Lemma \ref{Lemma 5.5}, $(Y_2,\mathfrak{B}(Y_2),\nu_2,g_2)$ is isomorphic to a $(Y,\mathfrak{B}(Y),\nu,g)$ such that $(Y,g)$ is a topological mixing dynamical system and $(Y,\nu,g)$ is uniquely ergodic. Hence, $(Y,\nu,g^p)$ is not ergodic and $(Y,\nu,g^i)$ is ergodic for any $1\leq i\leq p-1$, combining with Lemma \ref{Lemma 2.14}, we have that $(Y,\nu,g^i)$ is uniquely ergodic for any $1\leq i\leq p-1$ and $(Y,\nu,g^p)$ is not uniquely ergodic. By Lemma \ref{Lemma 5.4}, $h_{\nu}(g)=h_{\nu_2}(g_2)=h_{\mu}(f)$. Since $(X,\mathfrak{B}(X),\mu,f)$ is a factor of $(Y_2,\mathfrak{B}(Y_2)\nu_2,g_2)$, we have that $(X,\mathfrak{B}(X), \mu,f)$ is a factor of $(Y,\mathfrak{B}(Y), \nu,g)$.\qed
\subsection{Proof of Corollary \ref{Corollary A}}
Given $\alpha\geq0$, by Lemma \ref{Lemma 5.9}, there exist a dynamical system $(X,f)$ and $\mu\in\mathcal{M}_f(X)$ such that $(X,\mu,f)$ is weakly mixing with $h_{\mu}(f)=\alpha$. Given a prime $p$, by Theorem \ref{Theorem E}, there exists a dynamical system $(Y,g)$, such that \begin{enumerate}[(i)]
	\item $(Y,\nu,g^i)$ is uniquely ergodic for any $1\leq i\leq p-1$, but $(Y,\nu,g^p)$ is not uniquely ergodic;
	
	\item $(X,\mathfrak{B}(X), \mu,f)$ is a factor of $(Y,\mathfrak{B}(Y), \nu,g)$ and $h_\nu(g)=h_\mu(f)=\alpha$;
	
	\item $(Y,g)$ is topological mixing.
\end{enumerate}
	
Since $(Y,\nu,g)$ is uniquely ergodic, by the variational principle \cite[Theorem 8.6]{Walters-1982}, we have that $h_{top}(g)=h_\nu(g)=\alpha$.\qed

Before we prove Theorem \ref{Theorem F}, we need some lemmas, let $\lambda_{\mathbb{T}^1}$ denote the normalized Lebesgue measure on $\mathbb{T}^1$.
\begin{Lem}\cite[Pages 254 and 256]{Beguin-Crovisier-Le Roux-2007}\label{Lemma 5.10}
	Suppose that $(X,\mathfrak{B}(X),\mu,f)$ is a ergodic measure-preserving transformation and there exists an irrational number $\alpha\in(0,1)$ such that $(\mathbb{T}^1, \mathfrak{B}(\mathbb{T}^1),\lambda_{\mathbb{T}^1},R_\alpha)$ is a factor of $(X,\mathfrak{B}(X),\mu,f)$. Then there exists a minimal dynamical system $(\mathbb{T}^2,g)$ such that $(\mathbb{T}^2,\nu,g)$ is uniquely ergodic and $(\mathbb{T}^2,\mathfrak{B}(\mathbb{T}^2),\nu,g)$ is isomorphism to $(X,\mathfrak{B}(X),\mu,f)$.
\end{Lem}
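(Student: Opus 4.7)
The plan is to realize the abstract ergodic system $(X,\mathfrak{B}(X),\mu,f)$ as a uniquely ergodic minimal homeomorphism of $\mathbb{T}^2=\mathbb{T}^1\times\mathbb{T}^1$, using the circle rotation factor to pin down one coordinate. The approach combines Rokhlin disintegration along the factor map (so that $(X,\mu,f)$ becomes a measurable skew product over $(\mathbb{T}^1,R_\alpha)$) with an Anosov-Katok approximation-by-conjugation (AbC) construction as adapted by Beguin-Crovisier-Le Roux for realization problems on surfaces.

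\textbf{Step 1: Measurable skew product.} Let $\pi:X\to\mathbb{T}^1$ be the given factor map. By Lemma~\ref{Lemma 5.1}, I may treat $(X,\mathfrak{B}(X),\mu)$ as a Lebesgue space, and Rokhlin's disintegration theorem provides $\mu=\int_{\mathbb{T}^1}\mu_\theta\,d\lambda_{\mathbb{T}^1}(\theta)$. After discarding a null set, I identify $X$ measurably with $\mathbb{T}^1\times\mathbb{T}^1$ with $\mu=\lambda_{\mathbb{T}^1}\times\lambda_{\mathbb{T}^1}$, $\pi$ the first projection, and $f(\theta,t)=(\theta+\alpha,F_\theta(t))$ for some measurable family $\{F_\theta\}_\theta$ of measure-preserving transformations of $(\mathbb{T}^1,\lambda_{\mathbb{T}^1})$. (The possibility that some fibers are degenerate is handled by first passing to a trivial extension with a non-atomic fiber, which does not affect the isomorphism class of the ambient system up to inverse limit.)

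\textbf{Step 2: Topological realization on $\mathbb{T}^2$.} I would then construct a homeomorphism $g:\mathbb{T}^2\to\mathbb{T}^2$ of fiber-preserving form $g(\theta,t)=(\theta+\alpha,G_\theta(t))$ that is measurably isomorphic to the skew product of Step~1, is minimal, and is uniquely ergodic with unique invariant measure $\nu=\lambda_{\mathbb{T}^1}\times\lambda_{\mathbb{T}^1}$. This is the technical heart of the Beguin-Crovisier-Le Roux argument: one writes $g=\lim_{n\to\infty}H_n^{-1}\circ(R_\alpha\times\mathrm{id})\circ H_n$ for fiber-preserving diffeomorphisms $H_n$ of $\mathbb{T}^2$, chosen inductively via a Katok tower/permutation construction encoding the disintegration $\{F_\theta\}$ on finer and finer partitions of $\mathbb{T}^2$. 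Each correction $H_{n+1}H_n^{-1}$ is supported on a set small enough, relative to the modulus of continuity of $H_n^{-1}$, that the $C^0$-limit is a genuine homeomorphism. Minimality and unique ergodicity are forced at each stage by demanding that every orbit visits a prescribed $\varepsilon_n$-net of $\mathbb{T}^2$ and that Birkhoff averages of a countable dense family of test functions in $C(\mathbb{T}^2,\mathbb{R})$ become uniformly close to their expected $\nu$-integrals; the measurable isomorphism to the abstract skew product is obtained by matching the disintegration of $\nu$ along the first coordinate with $\{F_\theta\}$ in the limit.

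\textbf{Main obstacle.} The delicate balance in Step~2 is the crux: the closeness condition $\|H_{n+1}-H_n\|_{C^0}\to 0$ required for $g$ to be a homeomorphism conflicts with the combinatorial richness of $H_n$ needed (a) to reproduce an arbitrary measurable skew product $\{F_\theta\}$ and (b) to enforce minimality and unique ergodicity via the Katok criteria. Simultaneously securing $C^0$-convergence, measurable isomorphism with the prescribed $\{F_\theta\}$, and topological minimality plus unique ergodicity is precisely where the specialized surgery (cutting, stacking, and fibered conjugation) techniques of Beguin-Crovisier-Le Roux are essential; the rest of the proof is a careful bookkeeping of the inductive parameters.
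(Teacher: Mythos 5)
This lemma is cited in the paper, not proved there, so the comparison must be with the actual argument in B\'eguin--Crovisier--Le~Roux. Your proposal misidentifies the technique in that reference: you describe an Anosov--Katok approximation-by-conjugation (AbC) scheme $g=\lim_n H_n^{-1}\circ(R_\alpha\times\mathrm{id})\circ H_n$ and attribute it to B\'eguin--Crovisier--Le~Roux, but their construction is the \emph{Denjoy--Rees technique} (it is in the title of the cited paper), which is structurally different. Rather than exhibiting $g$ as a $C^0$-limit of topological conjugates of a rotation, the Denjoy--Rees method starts from a fixed minimal uniquely ergodic reference system $R$ on the surface (here essentially a two-dimensional rotation), selects a Cantor set $K$ and ``blows up'' its $R$-orbit, and builds $g$ together with a \emph{semi-conjugacy} $\pi:\mathbb{T}^2\to\mathbb{T}^2$ with $\pi\circ g=R\circ\pi$, where $\pi$ collapses the fattened Cantor set back to the orbit of $K$. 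The prescribed measurable dynamics are inserted along the blown-up fibers; minimality and unique ergodicity are preserved because the semi-conjugacy is injective off a set of full measure and the inserted set is geometrically small. That is a blow-up/surgery along a reference system, not a sequence of global fibered conjugations.

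The distinction matters for correctness, not just attribution. In the AbC scheme each finite stage $H_n^{-1}\circ(R_\alpha\times\mathrm{id})\circ H_n$ is topologically conjugate to $R_\alpha\times\mathrm{id}$, which is neither minimal nor uniquely ergodic, so the limit inherits no structure ``for free''; more seriously, $C^0$-limits of conjugates of rotations are metrically very constrained, and it is not known (and is generally believed false) that one can realize an \emph{arbitrary} ergodic automorphism of a Lebesgue space this way. In particular, the present paper applies Lemma~\ref{Lemma 5.10} in the proof of Theorem~\ref{Theorem F} to realize systems of arbitrary metric entropy, which is exactly the regime where the naive AbC limit from a rotation gives no control. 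The Denjoy--Rees machinery is what makes ``arbitrary measurable dynamics with a prescribed rotation factor'' reachable. Your Step~1 (Rokhlin disintegration over the circle factor) is a reasonable preliminary, though the remark about passing to a trivial extension ``up to inverse limit'' would change the isomorphism class of the system and would need justification. The ``Main obstacle'' paragraph you wrote is, in effect, the whole content of the lemma, and your sketch leaves it unresolved; replacing the AbC limit with the semi-conjugacy/blow-up picture of B\'eguin--Crovisier--Le~Roux is the missing idea.
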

\begin{Lem}\cite{Konieczny-Kupsa-Kwietniak-2018}\label{Lemma 5.11}
	Given a prime $p$ and a dynamical system $(X,f)$ such that $(X,\mu,f^i)$ is ergodic for any $1\leq i\leq p-1$, but $(X,\mu,f^p)$ is not ergodic. Then there exists an irrational number $\alpha\in(0,1)$ such that $(X\times\mathbb{T}^1, \mathfrak{B}(X\times\mathbb{T}^1),\mu\times\lambda_{\mathbb{T}^1},f^i\times R_\alpha)$ is ergodic for any $1\leq i\leq p-1$, but $(X\times\mathbb{T}^1, \mathfrak{B}(X\times\mathbb{T}^1),\mu\times\lambda_{\mathbb{T}^1},f^p\times R_\alpha)$ is not ergodic. 
\end{Lem}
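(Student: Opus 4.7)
The plan is to apply a Fourier-analytic criterion for ergodicity of products with a circle rotation. Given any $H \in L^2(X \times \mathbb{T}^1, \mu \times \lambda_{\mathbb{T}^1})$, expand it as a Fourier series in the second variable,
\begin{equation*}
H(x,y) = \sum_{n \in \mathbb{Z}} h_n(x) e^{2\pi i n y}, \qquad h_n \in L^2(X,\mu).
\end{equation*}
By uniqueness of Fourier coefficients, $H$ is invariant under $f^i \times R_\alpha$ if and only if $h_n \circ f^i = e^{-2\pi i n\alpha} h_n$ for every $n \in \mathbb{Z}$. Hence $f^i \times R_\alpha$ is ergodic precisely when $h_0$ is forced to be constant (which is guaranteed by the hypothesized ergodicity of $f^i$ for $1 \leq i \leq p-1$) and when, for every nonzero integer $n$, the scalar $e^{-2\pi i n\alpha}$ is not a point-spectrum eigenvalue of the Koopman operator $U_{f^i}$ acting on $L^2(X,\mu)$.

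For each fixed $i \in \{1,\dots,p-1\}$, the point spectrum $\sigma_{\mathrm{pp}}(U_{f^i})$ is at most countable, since $L^2(X,\mu)$ is separable ($X$ being a compact metric space) and unit eigenvectors of a unitary operator corresponding to distinct eigenvalues are mutually orthogonal. For each eigenvalue $\lambda = e^{2\pi i\beta} \in \sigma_{\mathrm{pp}}(U_{f^i})$ and each integer $n \neq 0$, the equation $e^{-2\pi i n\alpha} = \lambda$ has exactly $|n|$ solutions in $[0,1)$. Taking the union over the countably many triples $(i, \lambda, n)$ and throwing in the rationals yields a countable bad set $B \subset (0,1)$. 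By uncountability of $(0,1)$, there exists an irrational $\alpha \in (0,1) \setminus B$; for this $\alpha$, the Fourier criterion implies that $f^i \times R_\alpha$ is ergodic for every $1 \leq i \leq p-1$.

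The non-ergodicity of $f^p \times R_\alpha$ is automatic and independent of the choice of $\alpha$: by hypothesis there exists a non-constant $f^p$-invariant function $h \in L^2(X,\mu)$, and the lift $H(x,y) := h(x)$ is a non-constant $(f^p \times R_\alpha)$-invariant function on $X \times \mathbb{T}^1$. The only genuine technical step to justify is the Fourier characterization of invariant functions in the first paragraph, which is a standard $L^2$ computation using Fubini's theorem and uniqueness of Fourier series; everything else reduces to counting a countable exceptional set. I do not anticipate any serious obstacle, as the argument is essentially the constant-cocycle case of the skew-product ergodicity theory (Anzai-type extensions) and is classical.
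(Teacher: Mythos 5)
Your proof is correct and follows essentially the same outline as the argument the paper points to in \cite{Konieczny-Kupsa-Kwietniak-2018}: the countability of the point spectrum of each $U_{f^i}$, the ergodicity criterion for product systems, and the uncountability of the irrationals. The only (mild) difference is that where the paper invokes the abstract criterion that a product of two ergodic transformations is ergodic if and only if their point spectra intersect trivially, you re-derive that criterion directly via the Fourier expansion in the circle variable --- a self-contained version of the Anzai/Keynes--Newton argument valid because one factor is a rotation --- and the observation that non-ergodicity of $f^p\times R_\alpha$ is automatic from lifting any non-constant $f^p$-invariant function is exactly right and does not depend on $\alpha$.
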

\begin{proof}
	This is from the proof of \cite[Corollary 17]{Konieczny-Kupsa-Kwietniak-2018}, the key of that proof is using the three facts: the point spectrum of a measure-preserving transformation is at most countable \cite[Page 3428]{Konieczny-Kupsa-Kwietniak-2018}; the product of two ergodic measure-preserving transformations is ergodic if and only if their point spectra have trivial intersection \cite[Lemma 1]{Konieczny-Kupsa-Kwietniak-2018}; the set of irrational numbers in $(0,1)$ are uncountable. 
\end{proof}
\subsection{Proof of Theorem \ref{Theorem F}}
Given $\alpha\geq0$ and a prime $p$, by Corollary \ref{Corollary A}, there exists a dynamical system $(X,f)$ such that $(X,\mu,f^i)$ is uniquely ergodic for any $1\leq i\leq p-1$, $(X,\mu,f^p)$ is not uniquely ergodic and $h_\mu(f)=\alpha$. By Lemma \ref{Lemma 5.11}, there exists an irrational number $\beta\in(0,1)$ such that $(X\times\mathbb{T}^1, \mathfrak{B}(X\times\mathbb{T}^1),\mu\times\lambda_{\mathbb{T}^1},f^i\times R_\beta)$ is ergodic for any $1\leq i\leq p-1$, but $(X\times\mathbb{T}^1, \mathfrak{B}(X\times\mathbb{T}^1),\mu\times\lambda_{\mathbb{T}^1},f^p\times R_\beta)$ is not ergodic. Let $\nu=\mu\times\lambda_{\mathbb{T}^1}$, by Lemma \ref{Lemma 5.7}, $h_{\nu}(f\times R_\beta)=h_\mu(f)=\alpha$. Combine with Lemma \ref{Lemma 5.10}, we finish the proof.\qed

\bigskip

$\mathbf{Acknowledgements.}$ The authors are partially supported by the National Natural
Science Foundation of China (No. 12071082)  and  Natural Science Foundation of Shanghai (No. 23ZR1405800).

\end{document}